\documentclass[12pt]{article}
\usepackage[utf8]{inputenc}
\usepackage[margin=2.5cm]{geometry}
\usepackage{amsthm}
\usepackage{adjustbox}
\usepackage{enumitem}
\setlist[itemize]{topsep=0pt}
\setlist[enumerate]{topsep=0pt}
\usepackage{amsmath}
\usepackage{amsfonts}
\usepackage{ amssymb }
\usepackage[title]{appendix}
\usepackage{adjustbox}
\usepackage{multirow}
\usepackage{multicol}
\usepackage{changepage}
\usepackage{color}
\usepackage{array}
\usepackage{graphicx}
\usepackage{hyperref}
\usepackage{mathtools}
\usepackage{xcolor}
\usepackage{tikz}
\usetikzlibrary{decorations.markings}
\usepackage{bm}
\usepackage{amsbsy}
\usepackage{titlefoot}
\usepackage{titlesec}
\titleformat*{\section}{\centering\bfseries\small\MakeUppercase}
\titleformat*{\subsection}{\normalsize\bfseries}
\titlespacing\section{0pt}{6pt plus 4pt minus 2pt}{0pt plus 2pt minus 2pt}
\titlespacing{\subsection}{0pt}{-2pt}{-4pt}
\titlespacing{\subsubsection}{0pt}{-2pt}{-4pt}
\usepackage{thm-restate}
\usepackage[capitalise]{cleveref}

\newtheoremstyle{exampstyle}
  {\topsep} % Space above
  {0pt} % Space below
  {\slshape} % Body font
  {} % Indent amount
  {\bfseries} % Theorem head font
  {.} % Punctuation after theorem head
  {.5em} % Space after theorem head
  {} % Theorem head spec (can be left empty, meaning `normal')
\theoremstyle{exampstyle}
\newtheorem{thm}{Theorem}[section]
\newtheorem{cor}[thm]{Corollary}
\newtheorem{lemma}[thm]{Lemma}

\newtheorem{prop}[thm]{Proposition}

\newtheoremstyle{def}
  {\topsep} % Space above
  {0pt} % Space below
  {} % Body font
  {} % Indent amount
  {\bfseries} % Theorem head font
  {.} % Punctuation after theorem head
  {.5em} % Space after theorem head
  {} % Theorem head spec (can be left empty, meaning `normal')
\theoremstyle{def}
\newtheorem{defn}[thm]{Definition}

\newtheorem{rmk}[thm]{Remark}
\newtheorem{exmp}[thm]{Example}
\newtheorem{question}[thm]{Question}

\newcommand{\comm}[1]{}
\setlength{\parskip}{1em}
\setlength{\parindent}{0em}

\newcommand{\Q}{\mathbb{Q}}
\newcommand{\Z}{\mathbb{Z}}

\newcommand{\RAAG}{A_{\Gamma}}

\newcommand{\myRed}[1]{\textbf{\textcolor{red}{#1}}}

\title{\large{\uppercase{\textbf{Conjugacy problem in virtual right-angled Artin groups}}}}
\author{Gemma Crowe}
\date{}

\begin{document}

\maketitle
\begin{abstract}
    In this paper we solve the conjugacy problem for several classes of virtual right-angled Artin groups, using algebraic and geometric techniques. We show that virtual RAAGs of the form $A_{\phi} = \RAAG \rtimes_{\phi} \Z/m\Z$ are $\mathrm{CAT}(0)$ when $\phi \in \mathrm{Aut}(\RAAG)$ is length-preserving, and so have solvable conjugacy problem. The geometry of these groups, namely the existence of contracting elements, allows us to show that the conjugacy growth series of these groups is transcendental. Examples of virtual RAAGs with decidable conjugacy problem for non-length preserving automorphisms are also studied. Finally, we solve the twisted conjugacy problem in RAAGs with respect to length-preserving automorphisms, and determine the complexity of this algorithm in certain cases. \\

    2020 Mathematics Subject Classification: 20E22, 20F10, 20F36, 20F67.
\end{abstract}
\unmarkedfntext{\emph{Keywords}: Conjugacy problem, twisted conjugacy problem, right-angled Artin groups, conjugacy growth}

\section{Introduction}
The conjugacy problem for groups has been studied extensively for the past century, and asks whether for a group $G$ with finite generating set $X$, there exists an algorithm to determine if any two words $u,v \in \left(X \cup X^{-1}\right)^{\ast}$ represent conjugate elements in $G$. One line of enquiry in recent years has been understanding the decidability of this problem in group extensions. This was first motivated by the study of the conjugacy problem in free-by-cyclic groups \cite{bogopolski_conjugacy_2006}, which was later extended to more general group extensions in \cite{bogopolski_orbit_2009}. A key tool from this paper in determining both decidable and undecidable results in group extensions is an algebraic property known as \emph{twisted conjugacy}. 

We say two elements $u,v \in G$ are \emph{twisted conjugate}, by some automorphism $\phi \in \mathrm{Aut}(G)$, if there exists an element $w \in G$ such that $v = \phi(w)^{-1}uw$. The \emph{twisted conjugacy problem} (TCP) asks whether there exists an algorithm to determine if two elements, given as words over $X \cup X^{-1}$, are twisted conjugate by some automorphism $\phi \in \mathrm{Aut}(G)$. Note that a positive solution to the twisted conjugacy problem implies a positive solution to the conjugacy problem, however the converse is not necessarily true \cite[Corollary 4.9]{bogopolski_orbit_2009}. In \cite{bogopolski_orbit_2009}, it was shown that the conjugacy problem in a group extension is directly linked to the twisted conjugacy problem in the base group. This motivated the study of the twisted conjugacy problem in several classes of groups \cite{ burillo_conjugacy_2016, cox_twisted_2017, crowe_twisted_2024, crowe_twisted_2024_2, gonzalez-meneses_twisted_2014}. In particular, the twisted conjugacy problem is decidable for both free and free abelian groups \cite{ciobanu_fixed_2022, ventura_multiple_2021}, which motivates the groups of interest for this paper, namely \emph{right-angled Artin groups} (RAAGs).

RAAGs, which we denote by $A_{\Gamma}$, are groups defined by finite simple graphs, with vertex sets $V(\Gamma)$ and edge sets $E(\Gamma)$. RAAGs are generated by the vertices of the graph, and relations are precisely of the form $ab = ba$, for every pair of generators $a, b \in V(\Gamma)$ such that $\{a,b\} \in E(\Gamma)$. A vast array of results have been shown for these groups \cite{charney_introduction_2007}, including a linear-time solution to the conjugacy problem \cite{crisp_conjugacy_2009}. The aim of this paper is to solve the conjugacy problem in virtual RAAGs, that is, groups which contain a finite index subgroup isomorphic to a RAAG. Note decidability of the conjugacy problem does not necessarily pass through finite group extensions \cite{collins_conjugacy_1977}. Primarily we will focus on finite cyclic extensions of RAAGs, which we define as groups with presentation
\begin{equation}\label{semidirect1}
    A_{\phi} = A_{\Gamma} \rtimes_{\phi} \Z/m\Z = \langle V(\Gamma), t \mid R(A_{\Gamma}), \; t^{m} = 1, \;  t^{-1}xt = \phi(x) \; (x \in V(\Gamma)) \; \rangle,
\end{equation}
where $\phi \in \mathrm{Aut}(\RAAG)$ is of finite order. 

In the first half of this paper, we study algebraic and geometric properties of virtual RAAGs. All RAAGs are $\mathrm{CAT}(0)$ groups \cite{charney_introduction_2007} (see \cref{sec:CAT0}), and we investigate when this geometric property passes to virtual RAAGs. Similar to conjugacy, the $\mathrm{CAT}(0)$ property is not a quasi-isometry invariant \cite[Caveat 7.6.7]{loh_geometric_2017}, however we find that this geometric property does hold for any virtual RAAG $A_{\phi}$, as in \cref{semidirect1}, when $\phi \in \mathrm{Aut}(\RAAG)$ is length-preserving. Here automorphisms of RAAGs can be defined as either length-preserving, where the length of any geodesic is preserved, and non-length preserving otherwise.

\begin{restatable*}{prop}{CATzero}\label{thm: CAT(0) extension RAAG}
    The group $A_{\phi} = \RAAG \rtimes_{\phi} \Z/m\Z$, as in \cref{semidirect1}, is a $\mathrm{CAT}(0)$ group, when $\phi \in \mathrm{Aut}(\RAAG)$ is length-preserving.  
\end{restatable*}
 \cite[III.$\Gamma$, Theorem 1.12]{bridson_metric_1999} then implies the following result.
\begin{restatable*}{thm}{CATzeroresult}\label{cor:CP solvable RAAGs len p CAT0}
    The conjugacy problem is solvable for all virtual RAAGs of the form $A_{\phi} = A_{\Gamma} \rtimes_{\phi} \Z/m\Z$, as in \cref{semidirect1}, when $\phi \in \mathrm{Aut}(\RAAG)$ is length-preserving.
\end{restatable*}
The geometry of these groups allows us to show that certain classes of virtual RAAGs contain a \emph{contracting element} (see \cref{defn:contracting}). This allows us to determine the \emph{conjugacy growth series} (see \cref{defn:conjugacy growth function series}) for some virtual RAAGs, using \cite[Corollary 1.8]{gekhtman_counting_2022}

\begin{thm}\label{thm:combine RAAG RACG growth}(see \cref{cor:extension transc contract} and \cref{thm:RACG conj growth})
Let $A_{\phi} = \RAAG \rtimes_{\phi} \Z/m\Z$ be a virtual RAAG as in \cref{semidirect1}, such that $\RAAG$ is not a direct product or cyclic, and $\phi \in \mathrm{Aut}(\RAAG)$ is length-preserving. Then the conjugacy growth series of $A_{\phi}$ is transcendental, with respect to the word metric. This result also holds for right-angled Coxeter groups which are not virtually a direct product.
\end{thm}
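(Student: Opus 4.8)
The plan is to obtain transcendence by verifying the hypotheses of \cite[Corollary 1.8]{gekhtman_counting_2022}, which concludes that the conjugacy growth series is transcendental for any group admitting a proper, cocompact action on a geodesic space that is not virtually cyclic and that possesses a \emph{contracting element} (in the sense of \cref{defn:contracting}). The whole argument therefore reduces to two tasks: exhibiting a contracting element, and confirming that the action and the group meet the standing regularity assumptions of that corollary. For the virtual RAAG $\G = \RAAG \rtimes_{\phi} \Z/m\Z$ with $\phi$ length-preserving, \cref{thm: CAT(0) extension RAAG} already supplies a $\mathrm{CAT}(0)$ structure, so $\G$ acts geometrically on a $\mathrm{CAT}(0)$ space; since a length-preserving $\phi$ is realised by a cubical automorphism, the action underlying that proposition is the natural action of $\G$ on the $\mathrm{CAT}(0)$ cube complex $\tilde S$ dual to the Salvetti complex, with $\RAAG \le \G$ acting in the standard geometric way. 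The essential content is then to produce a contracting isometry for this action.

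First I would locate the contracting element inside the finite-index subgroup $\RAAG$. Because $\RAAG$ is neither cyclic ($\Gamma$ is not a single vertex) nor a direct product ($\Gamma$ is not a join), the cube complex $\tilde S$ is irreducible and of positive dimension, and $\RAAG$ acts on it essentially, cocompactly, and non-elementarily. Rank rigidity for $\mathrm{CAT}(0)$ cube complexes then yields an element $g \in \RAAG$ acting as a rank-one isometry of $\tilde S$, which is exactly a contracting element. Since $\G$ acts on the \emph{same} complex $\tilde S$ and $g \in \RAAG \subseteq \G$ acts by the identical isometry, $g$ is equally a rank-one, hence contracting, element for the $\G$-action; and as $\RAAG$ is non-cyclic, $\G$ is not virtually cyclic. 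Invoking \cite[Corollary 1.8]{gekhtman_counting_2022} gives the transcendence of the conjugacy growth series, which is \cref{cor:extension transc contract}.

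For the right-angled Coxeter statement (\cref{thm:RACG conj growth}) I would run the identical scheme on the Davis complex: a RACG acts geometrically on its $\mathrm{CAT}(0)$ Davis complex, and the hypothesis that it is not virtually a direct product is precisely what makes that complex irreducible and the action non-elementary, so rank rigidity again supplies a contracting element and \cite[Corollary 1.8]{gekhtman_counting_2022} applies verbatim. The principal obstacle lies in the second paragraph: cleanly matching the combinatorial condition on $\Gamma$ (not a join, not a vertex) to irreducibility of $\tilde S$ and to non-elementarity, so that rank rigidity produces a genuine contracting element rather than a merely Morse one. A secondary, more bookkeeping-type point is checking that a geometric action automatically satisfies the regularity hypothesis required by \cite{gekhtman_counting_2022} (statistical convex-cocompactness, equivalently purely exponential growth); this should hold for any geometric action, but it is the step where I would reconcile my notion of ``contracting'' with the precise definition in \cref{defn:contracting} before applying the corollary.
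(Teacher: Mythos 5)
Your proposal is correct and is essentially the paper's own argument: a rank-one, hence contracting, element of $\RAAG$ (the paper cites \cite[Theorem 5.2]{behrstock_divergence_2012} together with \cite[Theorem 5.4]{bestvina_characterization_2009} where you invoke cubical rank rigidity, and for RACGs it cites \cite[Theorem 2.14]{charney_contracting_2015} rather than running rank rigidity on the Davis complex) remains a contracting element for the extended action of $A_{\phi}$ on the same complex provided by \cref{thm: CAT(0) extension RAAG}, after which \cref{thm:contracting conj growth} concludes. The one step you defer as ``bookkeeping'' --- reconciling contraction for the $\mathrm{CAT}(0)$ metric with the requirement in \cref{defn:contracting} and \cref{thm:contracting conj growth} that the element be contracting for the action on $\mathrm{Cay}\bigl(A_{\phi}, \widehat{X}\bigr)$ --- is exactly the point the paper handles by projecting geodesics of $\mathrm{Cay}\bigl(A_{\phi}, \widehat{X}\bigr)$ onto the $1$-skeleton $S^{1}$, so in a complete write-up it needs an explicit argument rather than deferral.
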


We also investigate finite extensions of RAAGs with respect to non-length preserving automorphisms. In several cases, virtual RAAGs are isomorphic to a graph product of cyclic groups, which have solvable conjugacy problem by \cite[Theorem 3.24]{green_graph_1990}.
\begin{restatable*}{cor}{mixCPRAAGs}\label{cor:CP solvable mix}
    There exists virtual RAAGs $A_{\phi} = \RAAG \rtimes_{\phi} \Z/m\Z$ as in \cref{semidirect1}, with respect to both length and non-length preserving automorphisms, which have solvable conjugacy problem. 
\end{restatable*}
In the second half of this paper, we provide an alternative proof to \cref{cor:CP solvable RAAGs len p CAT0}, by constructing an implementable algorithm to solve the twisted conjugacy problem in RAAGs, when our automorphism $\phi \in \mathrm{Aut}(\RAAG)$ is length-preserving. We also show that the complexity matches that of the conjugacy problem when our automorphism consists of inversions only.
\begin{restatable*}{thm}{TCPRAAG}\label{thm:len p RAAGs pilings}
    The twisted conjugacy problem $\mathrm{TCP}_{\phi}(\RAAG)$ is solvable for all RAAGs, when $\phi \in \mathrm{Aut}(\RAAG)$ is length-preserving. Moreover, when $\phi \in \mathrm{Aut}(\RAAG)$ is a composition of inversions, it is decidable in linear time, on a RAM machine, whether two elements are twisted conjugate in $\RAAG$. 
\end{restatable*}
\comm{
We note that the results from this paper can be extended to right-angled Coxeter groups, which have the same group presentation as RAAGs, with the added relation that each generator has order two.}
The structure of this paper is as follows. After providing necessary definitions and results in \cref{sec:prelims}, we provide a survey of algebraic and geometric results in \cref{sec:survey} related to the conjugacy problem in virtual RAAGs. Finally in \cref{sec:pilings}, we provide a summary of the algorithm given in \cite{crisp_conjugacy_2009} to solve the conjugacy problem in RAAGs, before adapting these ideas to solve the twisted conjugacy problem in RAAGs, with respect to length-preserving automorphisms.

\section{Preliminaries}\label{sec:prelims}
\subsection{Notation}
Throughout this paper, groups are finitely generated. For a subset $S$ of a group, we let $S^{\pm} = S \cup S^{-1}$, where $S^{-1} = \{s^{-1} \mid s \in S \}$. Let $X$ be a finite set, and let $X^{\ast}$ denote the set of all finite words over $X$. For any word $w \in X^{\ast}$, we let $l(w)$ denote the word length of $w$ over $X$. For a group element $g \in G$, we define the \emph{length} of $g$, denoted $| g |_{X} $, to be the length of a shortest representative word for the element $g$ over $X$, i.e.\ $|g| = \min \{ l(w) \mid w\in X^*, \; w =_G g\}$ (if $X$ is fixed or clear from the context, we write $|g|$). A word $w \in X^{\ast}$ is called a \emph{geodesic} if $l(w) = |\pi(w)|$, where $\pi \colon X^{\ast} \rightarrow G$ is the natural projection. \comm{If there exists a unique word $w$ of minimal length representing $g$, then we say $w$ is a \emph{unique geodesic}. Otherwise, $w$ is a non-unique geodesic. }

\begin{defn}\label{defn: length pres}
Let $G = \langle X \rangle$ be a group. We say $\phi \in \mathrm{Aut}(G)$ is:
\begin{enumerate}
    \item[(i)] \emph{length-preserving} if $|\pi(w)| = |\phi(\pi(w))|$ for any word $w \in X^{\ast}$, and
    \item[(i)] \emph{non-length preserving} otherwise.
\end{enumerate}
\end{defn}
For a group $G = \langle X \rangle$ and words $u, v \in X^{\ast}$, we use $u = v$ to denote equality of words, and $u =_{G} v$ to denote equality of the group elements represented by $u$ and $v$. For a word $w = x_{1}\dots x_{n} \in X^{\ast}$, where $x_{i} \in X$ for all $1 \leq i \leq n$, we define a \emph{cyclic permutation} of $w$ to be any word of the form
\[ w' = x_{i+1}\dots x_{n}x_{1}\dots x_{i},
\]
for some $i \in \{1, \dots, n-1 \}$.  

\subsection{Groups based on graphs}
We let $\Gamma$ denote a graph with vertex set $V(\Gamma)$ and edge set $E(\Gamma)$. We say $\Gamma$ is a finite simple graph if $V(\Gamma)$ and $E(\Gamma)$ are finite, and there are no loops or multiple edges. For any vertex $v \in V(\Gamma)$, we define the \emph{link} of a vertex $\mathrm{Lk}(v)$ as
\[ \mathrm{Lk}(v) = \{ x \in V(\Gamma) \mid \{v,x\} \in E(\Gamma) \}.
\]
Similarly we define the \emph{star} of a vertex $\mathrm{St}(v)$ as $\mathrm{St}(v) = \mathrm{Lk}(v) \cup \{v\}.$
\begin{defn}\label{defn:graph product}
    Let $\Gamma$ be a finite simple graph with vertices $V(\Gamma)$ and edges $E(\Gamma)$. Let $\mathcal{G} = \{G_{i} \mid i \in V(\Gamma) \} $ be a collection of groups which label the vertices of $\Gamma$. The associated \emph{graph product}, denoted $G_{\Gamma}$, is the group defined as the quotient
\[ \left( \ast_{i \in V(\Gamma)} G_{i}\right) / \langle \langle st= ts, \; s \in G_{i}, \; t \in G_{j}, \; \{i,j\} \in E(\Gamma) \rangle \rangle.
\]
If each vertex group is isomorphic to $\Z$, then $G_{\Gamma}$ is a \emph{right-angled Artin group} (RAAG), with presentation
    \[ \RAAG \cong \langle V(\Gamma) \mid [s,t] = 1 \; \text{for all} \; \{s,t\} \in E(\Gamma) \rangle.
    \]
If each vertex group is isomorphic to $\Z/2\Z$, then $G_{\Gamma}$ is a \emph{right-angled Coxeter group} (RACG), with presentation
    \[ W_{\Gamma} \cong \langle V(\Gamma) \mid s^{2} = 1 \; \text{for all} \; s \in V(\Gamma), [s,t] = 1 \; \text{for all} \; \{s,t\} \in E(\Gamma) \rangle.
    \] 
    We let $V(\Gamma) = \{s_{1}, \dots, s_{r} \}$ denote the standard generating set for $A_{\Gamma}$ or $W_{\Gamma}$. With this convention, standard generators commute in a RAAG or RACG if and only if an edge exists between vertices in our defining graph. This for example differs from the convention used in \cite{crisp_conjugacy_2009}. We let $X = V(\Gamma)^{\pm}$, and we order $X$ by setting $s_{1} < s^{-1}_{1} < \dots < s_{r} < s^{-1}_{r}.$ 
\end{defn}
  
\comm{
\begin{defn}
Let $\Gamma$ be a finite simple graph. The right-angled Artin group (RAAG) defined by $\Gamma$, denoted $A_{\Gamma}$, is the group with presentation
\[ \RAAG = \langle V(\Gamma) \mid \{s,t\} = 1 \; \text{for all} \; \{s,t\} \in E(\Gamma) \rangle
\]
Right-angled Coxeter groups (RACG) are defined similarly to RAAGs with the extra relations that every generator is of order 2. We denote $W_{\Gamma}$ to be the RACG defined by $\Gamma$.
\[ W_{\Gamma} = \langle V(\Gamma) \mid, s^{2} = 1 \; \text{for all} \; s \in V(\Gamma), \{s,t\} = 1 \; \text{for all} \; \{s,t\} \in E(\Gamma) \rangle
\]
\end{defn}
These groups are widely studied in geometric group theory, see \cite{Charney2007a} for further information.}

\comm{
\begin{defn}
A \emph{short exact sequence} of groups $N, G, H$ is a sequence of group homomorphisms
\[ 1 \rightarrow N \xrightarrow{i} G \xrightarrow{\pi} H \rightarrow 1
\]
such that
\begin{enumerate}
    \item $im(i) = ker(\pi)$
    \item $i$ is injective, i.e. $N \trianglelefteq G$
    \item $\pi$ is surjective, i.e. $H \cong G/N$
\end{enumerate}
A short exact sequence \emph{splits} if there exists a homomorphism $\gamma: H \rightarrow G$ such that $\pi \circ \gamma$ is the identity on $H$. 
\end{defn}

\begin{thm}\label{thm:SES}
A group $G$ is the semi-direct product of two groups $N$ and $H$ if and only if there exists a short exact sequence
\[ 1 \rightarrow N \xrightarrow{i} G \xrightarrow{\pi} H \rightarrow 1
\]
which splits. 
\end{thm}}

For RAAGs and RACGs, generators of the automorphism groups were classified by Servatius and Laurence \cite{laurence_generating_1995, servatius_automorphisms_1989}, and consist of four types:
\begin{enumerate}
    \item \textbf{Inversions}: For some $x \in V(\Gamma)$, send $x \mapsto x^{-1}$, and fix all other vertices.
    \item \textbf{Graph automorphisms}: Induced by the isomorphisms of the defining graph $\Gamma$.
    \item \textbf{Partial conjugations}: Let $x \in V(\Gamma)$, let $Q \subseteq \Gamma$ be the subgraph obtained by deleting $x$, all vertices adjacent to $x$ and all incident edges. Let $P \subseteq Q$ be a union of connected components of $Q$. A partial conjugation maps $p \mapsto xpx^{-1}$ for all $p \in P$, and fixes all other vertices.
    \item \textbf{Dominating transvections}: Let $x,y \in V(\Gamma)$, $x \neq y$, and assume $y$ is adjacent to all vertices in $\Gamma$ which are adjacent to $x$. A dominating transvection maps $x \mapsto xy^{\pm 1}$ or $x \mapsto y^{\pm 1}x$, and fixes all other vertices.
\end{enumerate}
\comm{
\section*{Automorphisms of RAAGs}
For any vertex $v \in V(\Gamma)$, we define the link of a vertex $Lk(v)$ as
\[ Lk(v) = \{ x \in V(\Gamma) \mid \{v,x\} \in E(\Gamma) \}
\]
Similarly we define the star of a vertex $St(v)$ as 
\[ St(v) = Lk(v) \cup \{v\}
\]
The automorphism group of a RAAG is generated by 4 types of automorphism:
\begin{enumerate}
    \item \textbf{Inversions:} $\phi: x \mapsto x^{-1}$
    \item \textbf{Graph automorphism:} Induced by any graph automorphism of $\Gamma$.
    \item \textbf{Dominated Transvections:} $x, y \in \Gamma$, $x \neq y$, $Lk(x) \subseteq St(y)$:
    \[\phi: x \mapsto xy^{\pm 1} \quad \text{or} \quad x \mapsto y^{\pm 1}x
    \]
    \item \textbf{Locally inner automorphism:} $x \in \Gamma$, $P \subseteq \Gamma \setminus St(x)$ is a union of connected components:
    \[ \phi: y \mapsto xyx^{-1} \quad \text{for all} \; y \in P
    \]
\end{enumerate}
This was conjectured by Servatius \cite{servatius_automorphisms_1989} in 1989, and proven for all possible RAAGs by Laurence six years later \cite{laurence_generating_1995}. We say $\phi \in \mathrm{Aut}(\RAAG)$ is:
\begin{itemize}
    \item \emph{length-preserving} if $l(w) = l(\phi(w))$ for any word $w \in X^{\ast}$
    \item \emph{non-length preserving} otherwise.
\end{itemize}
For RAAGs, it is clear that inversions and graph automorphisms are length-preserving, while dominated transvections and locally inner automorphisms are non-length preserving.} 
The following is straightforward to show from this classification.
\begin{lemma}\label{prop:combo len p}
Let $\phi \in \mathrm{Aut}(\RAAG)$ be a length-preserving automorphism. Then $\phi$ is a finite composition of inversions and graph automorphisms.
\end{lemma}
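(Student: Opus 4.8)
The plan is to avoid a case analysis over the Servatius--Laurence generators and instead exploit length-preservation directly to pin down $\phi$ on the generators. Since $\phi$ is length-preserving, $|\phi(s_i)| = |s_i| = 1$ for every vertex $s_i \in V(\Gamma)$, and as $\phi$ is injective $\phi(s_i) \neq 1$; hence $\phi(s_i)$ is an element of length exactly $1$, which forces $\phi(s_i) = s_{\sigma(i)}^{\epsilon_i}$ for some index $\sigma(i)$ and sign $\epsilon_i \in \{+1,-1\}$. This already shows that, up to inversion, $\phi$ merely permutes the standard generators; the remaining work is to certify that the induced map $\sigma$ is a graph automorphism and then to peel off the inversions.

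First I would check that $\sigma \colon V(\Gamma) \to V(\Gamma)$ is a bijection. Because $\phi$ is surjective, the elements $s_{\sigma(i)}^{\epsilon_i}$ generate $\RAAG$; passing to the abelianization $\Z^{r}$, the images $\pm e_{\sigma(i)}$ must span $\Z^{r}$, which forces $\sigma$ to be onto and hence, being a self-map of a finite set, a bijection. Next, since $\phi$ is an automorphism, two distinct generators $s_i, s_k$ commute in $\RAAG$ if and only if $\phi(s_i)$ and $\phi(s_k)$ commute. In a RAAG distinct generators commute precisely when they are adjacent, and $s_{\sigma(i)}^{\epsilon_i}$ commutes with $s_{\sigma(k)}^{\epsilon_k}$ exactly when $s_{\sigma(i)}$ and $s_{\sigma(k)}$ do; combined with the injectivity of $\sigma$ (so that $\sigma(i) \neq \sigma(k)$) this yields $\{s_i,s_k\} \in E(\Gamma) \iff \{s_{\sigma(i)}, s_{\sigma(k)}\} \in E(\Gamma)$. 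Thus $\sigma$ preserves adjacency and non-adjacency, i.e.\ $\sigma$ is an automorphism of $\Gamma$.

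To finish, let $\tau \in \mathrm{Aut}(\RAAG)$ be the graph automorphism induced by $\sigma$, so $\tau(s_i) = s_{\sigma(i)}$. Composing, $\tau^{-1}\phi$ sends $s_i \mapsto s_i^{\epsilon_i}$, fixing each generator up to inversion; such a map is exactly the composition of the inversions on those $s_i$ with $\epsilon_i = -1$ (inverting an arbitrary subset of generators is always an automorphism, as it preserves every defining relation $[s,t]=1$). Writing this as $\iota$, we obtain $\phi = \tau \iota$, a composition of a graph automorphism with inversions, as required.

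The only genuinely delicate point is the opening observation that length-preservation forces $\phi$ to act as a signed permutation of the generators; once this is in hand the rest is bookkeeping. I would take care with the composition order and with the direction of the equivalence in the adjacency argument, since both directions are needed and each relies on $\sigma$ being injective. I note that this argument is in fact self-contained and does not invoke the classification of $\mathrm{Aut}(\RAAG)$ itself; the classification is used only to confirm that inversions and graph automorphisms are legitimate elements of $\mathrm{Aut}(\RAAG)$, which we appeal to implicitly.
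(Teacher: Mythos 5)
Your proof is correct, and it diverges from the paper in a meaningful way in its second half. Both arguments open with the same key observation: length-preservation forces $|\phi(s_i)|=1$ for every generator, so $\phi$ acts as a signed permutation of the generators, i.e.\ a permutation of $X = V(\Gamma)^{\pm}$. At that point the paper simply leans on the Servatius--Laurence classification, asserting that the automorphisms permuting $X$ are precisely the finite compositions of inversions and graph automorphisms; this is the sense in which the paper calls the lemma ``straightforward to show from this classification.'' You instead prove that assertion from scratch: surjectivity of $\phi$ passed to the abelianization $\Z^{r}$ shows the index map $\sigma$ is a bijection of $V(\Gamma)$; preservation of commutation, together with the standard fact that distinct generators of a RAAG commute if and only if they are adjacent, shows $\sigma$ is a graph automorphism; and you then exhibit the explicit factorization $\phi = \tau\iota$, with $\tau$ the graph automorphism induced by $\sigma$ and $\iota$ the product of inversions on the generators with $\epsilon_i = -1$. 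What the paper's route buys is brevity, since the classification is already on the table; what yours buys is self-containedness and, arguably, more rigor -- extracting the needed assertion from the classification is not completely immediate (a composition of the four generator types that happens to permute $X$ is not visibly a composition of inversions and graph automorphisms alone), and your direct construction is essentially the argument one would have to supply to justify it. Two small remarks: your appeal to injectivity to rule out $\phi(s_i)=1$ is redundant, since length-preservation already gives $|\phi(s_i)|=1\neq 0$; and your construction makes transparent the side observation, also noted in the paper, that every length-preserving automorphism has finite order.
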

\comm{
\begin{proof}
Let $X = V(\Gamma)^{\pm}$. By \cref{defn: length pres}, $|\phi(x)| = 1$ for all $x \in X$, and so $\phi$ must permute $X$. This immediately implies that $\phi$ has finite order. For RAAGs, the automorphisms which permute $X$ are precisely inversions and graph automorphisms, which completes the proof.
\end{proof}}
\begin{defn}\cite[Section 2.2.1]{crisp_conjugacy_2009}\label{defn:CR RAAGS}
Let $\RAAG = \langle V(\Gamma) \rangle$, and let $v \in X^{\ast}$ be a geodesic. We say $v$ is \emph{cyclically reduced} if there does not exist a sequence of cyclic permutations, commutation relations and free reductions to a geodesic word $w \in X^{\ast}$, such that $l(v) > l(w)$.
\end{defn}
In particular, a geodesic $w \in X^{\ast}$ is cyclically reduced if and only if $w$ does not have the form $w = x_{1}s^{\pm 1}_{i}x_{2}s^{\mp 1}_{i}x_{3}$, where $x_{1}, x_{2}, x_{3} \in X^{\ast}$, $s_{i} \in V(\Gamma)$ and all letters of $x_{1}$ and $x_{3}$ commute with $s_{i}$ in $\RAAG$. 
\subsection{Decision problems}
We recall two of Dehn's decision problems in group theory.
\begin{defn}
    Let $G$ be a group given by a finite presentation with generating set $X$.
    \begin{enumerate}
        \item The \emph{word problem} for $G$, denoted $\mathrm{WP}(G)$, takes as input a word $w \in X^{\ast}$, and decides whether it represents the trivial element of $G$.
        \item The \emph{conjugacy problem} for $G$, denoted $\mathrm{CP}(G)$, takes as input two words $u,v \in X^{\ast}$, and decides whether they represent conjugate elements in $G$. We write $u \sim v$ when two words $u,v \in X^{\ast}$ represent conjugate elements.
    \end{enumerate}
\end{defn}
\comm{
\begin{exmp}\label{exmp: groups solvable CP}
We provide a non-exhaustive list of groups with solvable conjugacy problem, and the time complexity, if known.
\begin{enumerate}
    \item \cite{epstein_linearity_2006} Hyperbolic: linear time. 
    \item \cite[III.$\Gamma$, Theorem 1.12]{bridson_metric_1999} $\mathrm{CAT}(0)$ groups: exponential time.
    \item \cite[Theorem 3.24]{green_graph_1990} Graph products: solvable conjugacy problem if and only if each vertex group has solvable conjugacy problem.
    \item \cite{crisp_conjugacy_2009} Right angled Artin/Coxeter groups: linear time.
    \comm{
    \item\label{thm:conj prob decid DAm}\cite[Proposition 3.1]{Holt2015}
    Dihedral Artin groups: linear time.}
\end{enumerate}
\end{exmp}}

\begin{defn}\label{defn:twisted conjugacy}
    Let $G = \langle X \rangle$, let $u,v \in X^{\ast}$, and let $\phi \in \mathrm{Aut}(G)$.
    \begin{enumerate}
        \item We say $u$ and $v$ are \emph{$\phi$-twisted conjugate}, denoted $u \sim_{\phi} v$, if there exists an element $w \in G$ such that $v =_{G} \phi(w)^{-1}uw$.
        \item The \emph{$\phi$-twisted conjugacy problem} for $G$, denoted $\mathrm{TCP}_{\phi}(G)$, takes as input two words $u,v \in X^{\ast}$, and decides whether they represent groups elements which are $\phi$-twisted conjugate to each other in $G$.
        \item The (uniform) \emph{twisted conjugacy problem} for $G$, denoted $\mathrm{TCP}(G)$, takes as input two words $u,v \in X^{\ast}$ and $\phi \in \mathrm{Aut}(G)$, and decides whether $u$ and $v$ represent group elements which are $\phi$-twisted conjugate in $G$.
    \end{enumerate}
\end{defn}
A solution to the $\mathrm{TCP}(G)$ implies a solution to $\mathrm{TCP}_{\phi}(G)$ for all $\phi \in \mathrm{Aut}(G)$, and therefore a solution to the $\mathrm{CP}(G)$ and the $\mathrm{WP}(G)$. 
\subsection{Group extensions}
We assume the reader is familiar with quasi-isometries and geometric group actions, and refer the reader to \cite{margalit_office_2017} for details.
\begin{defn}
    An \emph{extension} of a group $H$ by $N$ is a group $G$ such that $N \trianglelefteq G$ and $G/N \cong H$. This can be encoded by a short exact sequence of groups
\[ 1 \rightarrow N \rightarrow G \rightarrow H \rightarrow 1.
\]
A group extension is \emph{split} if and only if $G$ is a semi-direct product, that is $G \cong N \rtimes_{\alpha} H$ for some homomorphism $\alpha \colon H \rightarrow \mathrm{Aut}(N)$. Unless otherwise stated, we assume $H$ is a finite cyclic group, and so $N$ has finite index in $G$. It is sufficient to specify $\alpha(t)$ for a generator $t$ of $H$. We let $\phi = \alpha(t)$, and so $G$ has presentation
\begin{equation}\label{eqn:original split}
    G \cong N \rtimes_{\phi} H = \langle X \cup t \mid R(N), \; t^{m} = 1, \; t^{-1}xt = \phi(x) \; (x \in X) \; \rangle,
\end{equation}
where $N$ has presentation $\langle X \mid R(N) \rangle$, and $\phi \in \mathrm{Aut}(N)$ is of finite order, such that $m$ divides the order of $\phi$. Any element of $G$ can be written in the form $g = vt^{l}$, where $v \in N$ and  $0 \leq l \leq m-1$. Unless otherwise stated, we will assume a finite group extension is that of the form in \cref{eqn:original split}, where the order of $\phi \in \mathrm{Aut}(N)$ is equal to the order of $t \in H$.
\end{defn}
\begin{defn}
    Let $\mathcal{P}$ be a group property. We say a group $G$ is \emph{virtually $\mathcal{P}$} if there exists a finite index subgroup $H \leq G$ such that $H$ has property $\mathcal{P}$.
\end{defn}
In particular, groups $G$ of the form in \cref{eqn:original split} are virtual $N$ groups, and $G$ and $N$ are quasi-isometric by the Milnor-\v{S}varc Lemma. There is an immediate link between conjugacy in a group extension $G \cong N \rtimes_{\phi} H$, and twisted conjugacy in the normal subgroup $N$. 
\comm{
\begin{defn}\cite[Chapter 7]{margalit_office_2017}
    Let $(X, d_{X})$ and $(Y, d_{Y})$ be metric spaces. A function $f \colon X \rightarrow Y$ is a \emph{quasi-isometric embedding} if there exist constants $K \geq 1$ and $C \geq 0$ such that
    \[ \frac{1}{k}d_{X}(x_{1}, x_{2}) - C \leq d_{Y}(f(x_{1}), f(x_{2})) \leq Kd_{X}(x_{1}, x_{2}) + C,
    \]
    for all $x_{1}, x_{2} \in X$. A quasi-isometric embedding is called a \emph{quasi-isometry} if the function $f \colon X \rightarrow Y$ is \emph{quasi-dense}, that is, there exists $D > 0$ such that for every $y \in Y$, there exists $x \in X$ such that $d_{Y}(f(x), y) \leq D$. Two metric spaces $(X, d_{X})$ and $(Y, d_{Y})$ are \emph{quasi-isometric} if there exists a quasi-isometry $f \colon X \rightarrow Y$. 
    
    Two groups $G$ and $H$ are quasi-isometric if there exists finite generating sets $S$ and $T$ for $G$ and $H$ respectively, such that the Cayley graphs $\mathrm{Cay}(G,S)$ and $\mathrm{Cay}(H,T)$ are quasi-isometric. 
\end{defn}
For any metric space $(X, d)$, we let $B(x,r)$ denote the ball of radius $r>0$ about $x \in X$. 
\begin{defn}\cite{margalit_office_2017}
    Let $G$ be a group which acts on a metric space $(X,d)$ by isometries. This group action is
    \begin{enumerate}
        \item[(i)] \emph{faithful} if its kernel is trivial,
        \item[(ii)] \emph{cobounded} if for all $x_{0} \in X$, there exists $r > 0$ such that for all $x \in X$, there exists $g \in G$ such that $x \in B(gx_{0}, r)$, and
        \item[(iii)] \emph{properly discontinuous} if for all $x \in X$, $r>0$, there exists finitely many $g \in G$ such that $B(gx, r) \cap B(x,r) \neq \varnothing$.
    \end{enumerate}
    A group action is \emph{geometric} if it is cocompact and properly discontinuous.
\end{defn}}
\comm{
\begin{lemma}[Milnor-\v{S}varc Lemma]\label{lemma:milnor-swarc}
    Let $G$ be a group and let $(X,d)$ be a proper geodesic metric space. Suppose $G$ acts geometrically on $X$. Then $G$ is finitely generated and quasi-isometric to $X$. 
\end{lemma}}
\comm{
\begin{cor}
    Let $G,H$ be groups such that $H \leq G$ is of finite index and $G$ is finitely generated. Then $G$ is quasi-isometric to $H$. 
\end{cor}}

\comm{
\begin{rmk}\label{rmk:CP not quasi isometry invariant}
    While there exist many properties, such as hyperbolicity and standard growth, which are quasi-isometry invariant, there are also important properties which are not. A relevant example is given by Collins and Miller \cite{collins_conjugacy_1977}, who constructed groups $H \leq G$, where $H$ is of finite index in $G$, such that $H$ has solvable conjugacy problem but $G$ does not. In particular, although RAAGs have solvable conjugacy problem \cite{crisp_conjugacy_2009}, we cannot assume that virtual RAAGs will also have solvable conjugacy problem.
\end{rmk}}

\comm{
\begin{rmk}
    For notation, we let $\widehat{X} = X \cup \{t\}^{\pm}$ denote the generating set for the group extension $G$ as in \cref{eqn:original split}. The order we take on $\widehat{X}$ is given by $x < t < t^{-1}$ for all $x \in X$. 
\end{rmk}}

\comm{
\begin{lemma}(\cite{bogopolski_conjugacy_2006}, Page 4)\label{lem:Ventura}
Let $G$ be a group extension of the form \eqref{eqn:original split}. Let $ut^{a}, vt^{b} \in G$ be two elements in $G$, with $u,v \in N$, $0 \leq a,b \leq m-1$. Then $ut^{a} \sim vt^{b}$ if and only if $a=b$ and $v \sim_{\phi^{a}} \phi^{k}(u) $, for some integer $k$ where $0 \leq k \leq m-1$. 
\end{lemma}}

\begin{prop}\cite[Proposition 4.7]{bogopolski_orbit_2009}\label{cor:CP extensions iff TCP in base}
    Let $G$ be a group extension as in \eqref{eqn:original split}. Then $\mathrm{CP}(G)$ is solvable if and only if $\mathrm{TCP}_{\phi^{a}}(N)$ is solvable for all $0 \leq a \leq m-1$. 
\end{prop}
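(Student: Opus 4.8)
The plan is to reduce both directions to a single explicit description of conjugacy in $G$: for $u,v\in N$ and $0\le a,b\le m-1$, the elements $ut^a$ and $vt^b$ are conjugate in $G$ if and only if $a=b$ and $\phi^k(u)\sim_{\phi^a}v$ for some $0\le k\le m-1$. Every element of $G$ has a normal form $wt^l$ with $w\in N$ and $0\le l\le m-1$, so this criterion converts $\mathrm{CP}(G)$ into a finite disjunction of $\phi^a$-twisted conjugacy problems in $N$, which is exactly the link the statement asserts. I would first prove this criterion and then read off each implication.

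To establish the criterion I would compute a general conjugate of $ut^a$. Writing an arbitrary element of $G$ as $g=ht^c$ with $h\in N$ and using the defining relation $t^{-1}xt=\phi(x)$ in the forms $t^a h=\phi^{-a}(h)t^a$ and $t^{-c}p=\phi^c(p)t^{-c}$, a direct simplification gives
\[
g^{-1}(ut^a)g \;=\; \phi^c\!\bigl(h^{-1}u\,\phi^{-a}(h)\bigr)\,t^a \;=\; \phi^c(h)^{-1}\phi^c(u)\,\phi^{c-a}(h)\,t^a .
\]
Two things follow. First, the $t$-exponent is unchanged, and since the projection $G\twoheadrightarrow\Z/m\Z$ is a homomorphism onto an abelian group, conjugate elements must have the same exponent, forcing $a=b$. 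Second, setting $w=\phi^{c-a}(h)$ (so that $\phi^c(h)=\phi^a(w)$, and $w$ ranges over all of $N$ as $h$ does) rewrites $g^{-1}(ut^a)g=vt^a$ as $v=\phi^a(w)^{-1}\phi^c(u)w$, i.e.\ $\phi^c(u)\sim_{\phi^a}v$. As $g$ ranges over $G$ the exponent $c$ ranges over $\{0,\dots,m-1\}$, which yields the criterion with $k=c$.

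Granting the criterion, the implication that the paper actually uses, namely that solvability of $\mathrm{TCP}_{\phi^a}(N)$ for all $a$ implies solvability of $\mathrm{CP}(G)$, is immediate and effective. Given two input words over $X\cup\{t\}^{\pm}$, I would first rewrite them into normal forms $ut^a$ and $vt^b$ by repeatedly applying $t^{\pm1}x=\phi^{\mp1}(x)t^{\pm1}$ to gather the $t$'s on the right and reducing the exponent modulo $m$; this is effective because $\phi$ is given explicitly (and $\mathrm{WP}(N)$, if needed, is subsumed by $\mathrm{TCP}_{\phi^0}(N)$). If $a\ne b$ the elements are non-conjugate; otherwise I run the $\mathrm{TCP}_{\phi^a}(N)$ algorithm on each of the finitely many pairs $(\phi^k(u),v)$ for $0\le k\le m-1$ and answer affirmatively precisely when one of them succeeds.

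The converse is the delicate part, and I expect it to be the main obstacle. Running the $\mathrm{CP}(G)$ algorithm on $ut^a$ and $vt^a$ decides, by the criterion, only the \emph{disjunction} $\exists k:\phi^k(u)\sim_{\phi^a}v$ — that is, whether the $\sim_{\phi^a}$-class of $v$ lies in the $\langle\phi\rangle$-orbit of the class of $u$ — whereas $\mathrm{TCP}_{\phi^a}(N)$ asks for the single instance $k=0$, so the task is to isolate that instance. The structural facts I would exploit are that $\sim_{\phi^a}$ is an equivalence relation on $N$ and that $\phi^a$ fixes each of its classes (since $u\sim_{\phi^a}\phi^a(u)$, via the conjugator $u^{-1}$), so $\phi$ permutes the $\sim_{\phi^a}$-classes in orbits whose size divides $\gcd(a,m)$; in particular when $\gcd(a,m)=1$ the orbit is a single class and the disjunction already coincides with $u\sim_{\phi^a}v$, recovering $\mathrm{TCP}_{\phi^a}(N)$ at once. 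The crux is separating the finitely many, a priori distinct classes $\phi^k(u)$ that can occur in a nontrivial orbit; this is exactly where the orbit-decidability machinery of \cite{bogopolski_orbit_2009} is brought to bear, and organising that separation into a terminating procedure is where the real work of the converse lies.
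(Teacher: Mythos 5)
Your conjugation computation, the resulting criterion ($ut^a\sim_G vt^b$ iff $a=b$ and $\phi^k(u)\sim_{\phi^a}v$ for some $k$), and the direction ``$\mathrm{TCP}_{\phi^a}(N)$ solvable for all $a$ $\Rightarrow$ $\mathrm{CP}(G)$ solvable'' are all correct and complete. Note, however, that the paper itself gives no proof of this proposition: it is imported verbatim from \cite{bogopolski_orbit_2009}, so the only question is whether your argument stands on its own. It does not, because the converse direction is missing, and you say so yourself: your text ends by declaring that ``organising that separation into a terminating procedure is where the real work of the converse lies.'' That is the gap, not a proof. Concretely, $\mathrm{CP}(G)$ only gives you an oracle for the disjunction $\exists k\colon \phi^k(u)\sim_{\phi^a}v$, and nothing you write extracts the single instance $k=0$ from it. Your structural observations (each $\sim_{\phi^a}$-class is fixed by $\phi^a$, so $\phi$ permutes classes in orbits of size dividing $\gcd(a,m)$) are correct but only settle the case $\gcd(a,m)=1$. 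Already the case $a=0$, $m=2$ shows what is at stake: there the claim reads ``$\mathrm{CP}(N\rtimes_\phi\Z/2\Z)$ solvable $\Rightarrow$ $\mathrm{CP}(N)$ solvable,'' i.e.\ conjugacy decidability descends to a split finite-index normal subgroup, while by \cite{collins_conjugacy_1977} decidability of the conjugacy problem does not descend to finite-index subgroups in general; so the converse cannot be dispatched by generalities and must exploit the split cyclic structure in an essential way. Your closing appeal to ``orbit-decidability machinery'' does not supply this: the main theorem of \cite{bogopolski_orbit_2009} characterises $\mathrm{CP}(G)$ by yet another existential statement (is $v$ conjugate to $\psi(u)$ for \emph{some} $\psi$ in the action subgroup?), which has exactly the same isolation problem and does not separate the finitely many classes $[\phi^k(u)]$ either.

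The gap matters for this paper more than you suggest, because you have the direction of use backwards. You call ``TCP for all $a$ implies CP'' the implication the paper actually uses; the paper does use that direction once (for virtually free groups in Section 3.1), but the load-bearing application is the opposite one: in \cref{sec:pilings} the decidability part of \cref{thm:len p RAAGs pilings} is obtained by combining \cref{cor:CP solvable RAAGs len p CAT0} ($A_\phi$ is $\mathrm{CAT}(0)$, hence $\mathrm{CP}(A_\phi)$ is solvable) with precisely the implication $\mathrm{CP}(G)\Rightarrow\mathrm{TCP}_{\phi^a}(N)$. So the half of the proposition you leave unproven is the half this paper relies on.
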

 
\comm{
We highlight the main result from that paper, which provides new examples of groups with solvable or unsolvable conjugacy problem, by studying twisted conjugacy and orbit decidability. 

\begin{thm}\label{thm:orbit conditions}\cite[Theorem 3.1]{bogopolski_orbit_2009}
    Let 
    \[ 1 \rightarrow F \xrightarrow{\alpha} G \xrightarrow{\beta} H \rightarrow 1
    \]
    be an algorithmic short exact sequence of groups such that
    \begin{enumerate}
        \item[(i)] $F$ has solvable twisted conjugacy problem,
        \item[(ii)] $H$ has solvable conjugacy problem, and
        \item[(iii)] for every $1 \neq h \in H$, the subgroup $\langle h \rangle$ has finite index in its centralizer $C_{H}(h)$, and there is an algorithm which computes a finite set of coset representatives $z_{h,1}, \dots z_{h, t_{h}} \in H$, i.e.
        \[ C_{H}(h) = \langle h \rangle z_{h,1} \sqcup \dots \sqcup \langle h \rangle z_{h,t_{h}}.
        \]
        \end{enumerate}
        Then the conjugacy problem for $G$ is decidable if and only if the action subgroup $A_{G} = \{\varphi_{g} \mid g \in G \} \leq \mathrm{Aut}(F)$ is orbit decidable 
        %(see \cref{defn:orbit decidability}).
\end{thm}
Note we assume in this result that groups are described in an `algorithmic way' (see \cite[Section 2]{bogopolski_orbit_2009}).}

\subsection{Conjugacy growth}\label{sec:conjugacy growth}
\begin{defn}
%Taken from Antolin/C paper, Page 4
A formal power series $f(z) = \sum^{\infty}_{i=0} a_{i}z^{i} \in \Z[[z]]$ is \emph{rational} if there exist non-zero polynomials $p(z), q(z) \in \Z[z]$ such that $f(z) = \frac{p(z)}{q(z)}$. Equivalently, $f(z)$ is rational if the coefficients $a_{i}$ of $f(z)$ satisfy a linear recursion.  Furthermore, $f(z)$ is \emph{irrational} if it is not rational. We say $f(z)$ is \emph{algebraic} if it is in the algebraic closure of $\Q(z)$, i.e.\ it is the solution to a polynomial equation with coefficients from $\Q(z)$. If $f(z)$ is not algebraic, then $f(z)$ is \emph{transcendental}.
\end{defn}
\begin{defn}\label{defn:conjugacy growth function series}
    For a group $G = \langle X \rangle$, we let $[g]_{c}$ denote the conjugacy class of $g \in G$. We define the \emph{length up to conjugacy} of an element $g \in G$ by
\[ |g|_{c} := \text{min}\{ |h| \mid h \in [g]_{c} \}.
\]
    The \emph{(strict) conjugacy growth function}, denoted as $c(n) = c_{G,X}(n)$, is defined as the number of conjugacy classes of length $=n$, i.e.\ 
\[ c(n) = \#\{[g]_{c} \mid |[g]_{c}| = n\}.\]
The \emph{conjugacy growth series} $C(z) = C_{G,X}(z)$ is defined to be the (ordinary) generating function of $c(n)$, i.e.
\[ C(z) = \sum\limits_{n = 0}^{\infty} c(n) z^n. \]
\end{defn}
The nature of this series has been determined for various classes of groups \cite{antolin_formal_2017, ciobanu_conjugacy_2024, ciobanu_conjugacy_2020, Guba2010}. All known results support the conjecture that the only finitely presented groups with rational conjugacy growth series are virtually abelian groups \cite[Conjecture 7.2]{ciobanu_conjugacy_2020}.

\section{Examples of extensions of RAAGs and RACGs}\label{sec:survey}

\subsection{Hyperbolic, virtually free and free abelian extensions}\label{sec:free abelian TCP}
Hyperbolicity is a quasi-isometry invariant, and so determining when RAAGs and RACGs are hyperbolic provides a positive solution to the conjugacy problem in finite extensions as in \cref{eqn:original split} \cite{epstein_linearity_2006}. For RACGs, there exists a graph criterion for these groups to be hyperbolic.
\begin{thm}\label{RACG:hyperbolic}\cite[Theorem 17.1]{moussong_hyperbolic_1988}
A RACG $W_{\Gamma}$ is hyperbolic if and only if $\Gamma$ does not contain any induced squares. 
\end{thm}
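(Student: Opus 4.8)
The plan is to prove both directions through the geometry of the Davis complex $\Sigma_{\Gamma}$, the canonical $\mathrm{CAT}(0)$ cube complex on which $W_{\Gamma}$ acts properly, cocompactly and by isometries. Since the action is geometric, $W_{\Gamma}$ is quasi-isometric to $\Sigma_{\Gamma}$ by the Milnor--\v{S}varc Lemma, and the Flat Plane Theorem \cite{bridson_metric_1999} reduces hyperbolicity of $W_{\Gamma}$ to the statement that $\Sigma_{\Gamma}$ contains no isometrically embedded flat plane $\mathbb{E}^{2}$. The theorem then splits into showing that an induced square produces such a flat, and that the absence of induced squares forbids one.

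For the forward (easier) direction, suppose $\Gamma$ contains an induced square on vertices $a,b,c,d$ with edges $\{a,b\},\{b,c\},\{c,d\},\{d,a\}$ and non-edges $\{a,c\},\{b,d\}$. Then $a$ and $c$ each commute with $b$ and $d$, but $a \not\leftrightarrow c$ and $b \not\leftrightarrow d$, so the special subgroup $\langle a,b,c,d\rangle$ splits as a direct product $\langle a,c\rangle \times \langle b,d\rangle$. Since $a,c$ are distinct non-commuting involutions, $\langle a,c\rangle \cong D_{\infty}$, and likewise $\langle b,d\rangle \cong D_{\infty}$. Thus $W_{\Gamma}$ contains $D_{\infty}\times D_{\infty}$, which in turn contains $\Z \times \Z = \Z^{2}$ as a finite-index subgroup. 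As every abelian subgroup of a word-hyperbolic group is virtually cyclic, $W_{\Gamma}$ cannot be hyperbolic; equivalently, by the Flat Torus Theorem this $\Z^{2}$ gives an isometrically embedded flat in $\Sigma_{\Gamma}$.

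The hard direction is the converse: assuming $\Gamma$ has no induced square, I would show that $\Sigma_{\Gamma}$ admits no flat plane. Here I would exploit the hyperplane combinatorics of the cube complex: hyperplanes of $\Sigma_{\Gamma}$ are dual to conjugates of the standard generators, two hyperplanes cross precisely when the corresponding generators commute (are joined by an edge of $\Gamma$), and the reflections in the walls of a single infinite dihedral subgroup $\langle s,t\rangle$ (with $s \not\leftrightarrow t$) yield an infinite family of pairwise-disjoint parallel walls. A flat plane must cross two infinite families of hyperplanes so that every wall of one family crosses every wall of the other while the walls within each family remain parallel; translating this back to $\Gamma$ forces two non-adjacent vertices $a,c$ and two non-adjacent vertices $b,d$ with each of $a,c$ adjacent to each of $b,d$, i.e.\ exactly an induced square. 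The main obstacle is making this last implication rigorous: one must rule out that a flat arises from a subtler, non-product configuration of walls, and in the Coxeter setting the clean way to do this is Moussong's combinatorial curvature computation, showing that the vertex links of $\Sigma_{\Gamma}$ are $\mathrm{CAT}(1)$ with systole controlled by the girth of $\Gamma$, so that no Euclidean flat can develop when $\Gamma$ is square-free. (The rank-$3$ Euclidean reflection subgroups that would otherwise obstruct hyperbolicity never occur for a right-angled system, since no affine Coxeter group of rank $\geq 3$ is right-angled, so the induced-square obstruction is the only one.)
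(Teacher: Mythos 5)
The paper offers no proof of this statement to compare against: it is quoted verbatim as an external citation to Moussong's thesis \cite[Theorem 17.1]{moussong_hyperbolic_1988}. Judged as a standalone proof attempt, your forward direction is correct and complete: an induced square on $a,b,c,d$ spans a special (standard parabolic) subgroup of $W_{\Gamma}$ isomorphic to $\langle a,c\rangle \times \langle b,d\rangle \cong D_{\infty}\times D_{\infty}$, which contains $\Z^{2}$ of finite index, and a word-hyperbolic group contains no $\Z^{2}$. That half needs nothing further.

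The converse, however, contains a genuine gap, and in fact two. First, the wall combinatorics is misstated: hyperplanes of the Davis complex are dual to conjugates $wsw^{-1}$ of standard generators, and while two \emph{crossing} walls have commuting underlying generators in the right-angled case, the converse fails --- adjacency of $s$ and $t$ in $\Gamma$ does not force a given wall of type $s$ to cross a given wall of type $t$ (indeed two distinct walls of the same type never cross although $s$ commutes with itself). Second, and more fundamentally, the key reduction ``flat plane $\Rightarrow$ two transverse families of pairwise-parallel walls $\Rightarrow$ induced square'' is asserted, not proved: a flat in a $\mathrm{CAT}(0)$ cube complex need not align with a product configuration of walls, and ruling out the skew configurations is exactly the hard content. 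You concede this and close the hole by invoking ``Moussong's combinatorial curvature computation'' that the links are $\mathrm{CAT}(1)$ and large when $\Gamma$ is square-free --- but that computation (Moussong's lemma on piecewise-spherical complexes of size $\geq \pi$, together with the verification that the all-right links of the Davis complex satisfy Gromov's flag-no-square condition precisely when $\Gamma$ has no induced squares, so the complex carries a $\mathrm{CAT}(-1)$ metric) \emph{is} the cited theorem. As written, your proposal proves the easy implication and cites the hard one, which is circular as a self-contained argument. To genuinely complete it you would need either to carry out the flag-no-square/$\mathrm{CAT}(-1)$ link argument in the right-angled setting, or to make the grid-of-hyperplanes argument rigorous (e.g.\ via the characterisation of hyperbolicity of cocompact $\mathrm{CAT}(0)$ cube complexes by uniformly bounded grids, followed by a pigeonhole argument extracting four generators forming an induced square); either route is substantive and is absent from the sketch.
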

For graph products, we have the following criteria on our defining graph to obtain hyperbolicity.
\begin{thm}\cite[Theorem 5.1]{holt_generalising_2012}
Let $G_{\Gamma}$ be a graph product. If $G_{\Gamma}$ is hyperbolic, then no two vertices with infinite vertex groups are adjacent.
\end{thm}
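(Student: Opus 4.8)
The plan is to prove the contrapositive: assuming that two vertices $i, j \in V(\Gamma)$ with $\{i,j\} \in E(\Gamma)$ have infinite vertex groups $G_i$ and $G_j$, I would show that $G_\Gamma$ cannot be hyperbolic. The first step is purely structural. Because $i$ and $j$ are adjacent, every element of $G_i$ commutes with every element of $G_j$ in $G_\Gamma$, so the subgroup $\langle G_i \cup G_j \rangle$ is a quotient of $G_i \times G_j$. To see it is actually isomorphic to $G_i \times G_j$, I would use the standard retraction property of graph products: the full subgraph on $\{i,j\}$ is a single edge, whose graph product is $G_i \times G_j$, and the homomorphism $G_\Gamma \to G_i \times G_j$ sending all other vertex groups to the identity restricts to the identity on $\langle G_i \cup G_j\rangle$. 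Hence $G_i \times G_j$ embeds as a subgroup of $G_\Gamma$.

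The second step produces an infinite-order element inside one of the factors. The delicate point is that an infinite group need not contain an element of infinite order, so I cannot simply quote the existence of a $\Z$ inside $G_i$ abstractly. Instead I would use that $G_i$ is an \emph{infinite subgroup of the hyperbolic group} $G_\Gamma$, together with the standard fact (see \cite[III.$\Gamma$]{bridson_metric_1999}) that torsion subgroups of a hyperbolic group are finite; consequently every infinite subgroup of a hyperbolic group contains an element of infinite order. Applying this to $G_i$ yields some $a \in G_i$ of infinite order. Since $a \in G_i$ and $i,j$ are adjacent, $a$ commutes with all of $G_j$, and because $G_i \times G_j \leq G_\Gamma$ we have $\langle a\rangle \cap G_j = 1$. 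Therefore the centralizer satisfies
\[
C_{G_\Gamma}(a) \;\supseteq\; \langle a \rangle \times G_j \;\cong\; \Z \times G_j .
\]

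The final step is to contradict the centralizer structure of hyperbolic groups. A second standard fact is that in a hyperbolic group the centralizer of an infinite-order element is virtually cyclic, i.e.\ finite or two-ended. But $\Z \times G_j$ with $G_j$ infinite is not virtually cyclic: the normal subgroup $\{1\} \times G_j$ is infinite, and in a two-ended group every infinite subgroup has finite index, which would force the $\Z$-factor to be finite, a contradiction. Hence $C_{G_\Gamma}(a)$ fails to be virtually cyclic, so $G_\Gamma$ is not hyperbolic, establishing the contrapositive. I expect the main obstacle to be the torsion subtlety in the second step, namely justifying the passage from ``$G_i$ infinite'' to ``$G_i$ contains an infinite-order element''; this is where one genuinely needs $G_i$ to sit inside a hyperbolic group rather than being an abstract infinite group, and it is what makes the two cited properties of hyperbolic groups the crux of the argument.
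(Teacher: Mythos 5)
The paper does not actually prove this statement: it is quoted verbatim from \cite[Theorem 5.1]{holt_generalising_2012}, so there is no internal proof to compare against. Your blind attempt is, however, a correct and self-contained proof of the cited result. The structural step is right: the canonical retraction of $G_{\Gamma}$ onto the graph product of the full subgraph on $\{i,j\}$ splits the natural map $G_{i} \times G_{j} \rightarrow G_{\Gamma}$, so $G_{i} \times G_{j}$ embeds. You also correctly isolate and resolve the one genuine subtlety, namely that an abstract infinite group need not contain an infinite-order element (so one cannot simply exhibit $\Z \times \Z \leq G_{\Gamma}$): since $G_{i}$ is an infinite subgroup of the hyperbolic group $G_{\Gamma}$, and torsion subgroups of hyperbolic groups are finite (equivalently, every subgroup is finite, virtually cyclic, or contains a free subgroup of rank two), $G_{i}$ contains some $a$ of infinite order. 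Then $C_{G_{\Gamma}}(a) \supseteq \langle a \rangle \times G_{j} \cong \Z \times G_{j}$, which is not virtually cyclic when $G_{j}$ is infinite, contradicting the standard fact that centralizers of infinite-order elements in hyperbolic groups are virtually cyclic \cite[III.$\Gamma$]{bridson_metric_1999}. Two small remarks. First, what you run is a proof by contradiction (you assume both adjacency of infinite vertex groups and hyperbolicity), not literally the contrapositive, but this is logically equivalent and the hyperbolicity assumption is genuinely needed to extract the infinite-order element, exactly as you observe. Second, an alternative packaging of the same ideas: a retract of a hyperbolic group is quasi-isometrically embedded, hence quasiconvex, hence itself hyperbolic, so it suffices to show that a direct product of two infinite groups is never hyperbolic --- which is your centralizer argument run inside $G_{i} \times G_{j}$ itself; this version makes the statement and its proof independent of how the product sits inside $G_{\Gamma}$.
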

For RAAGs, this immediately leads to the following result. 

\begin{cor}
A RAAG $A_{\Gamma}$ is hyperbolic if and only if $A_{\Gamma}$ is isomorphic to a free group.
\end{cor}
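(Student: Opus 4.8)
The plan is to prove the two implications separately, using the preceding graph-product theorem \cite[Theorem 5.1]{holt_generalising_2012} for the forward direction and the elementary hyperbolicity of free groups for the converse.

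For the forward direction, I would suppose $\RAAG$ is hyperbolic and view it as a graph product in which every vertex group is $\Z$. The cited theorem then gives that no two vertices carrying infinite vertex groups are adjacent. The key translation step is to observe that, since every vertex group of a RAAG is $\Z$ and hence infinite, this conclusion is equivalent to $E(\Gamma) = \varnothing$. With no edges, the defining presentation of $\RAAG$ from \cref{defn:graph product} has generating set $V(\Gamma)$ and an empty set of commuting relations, so $\RAAG$ is the free group on $V(\Gamma)$.

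For the converse, if $\RAAG$ is isomorphic to a free group then it is hyperbolic, since the Cayley graph of a free group with respect to a free basis is a tree, which is $0$-hyperbolic, and hyperbolicity is a quasi-isometry invariant. The degenerate cases $|V(\Gamma)| = 0$ and $|V(\Gamma)| = 1$, giving the trivial group and $\Z$ (free of rank $0$ and $1$), are covered by the same reasoning.

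I expect no genuine obstacle here: once \cite[Theorem 5.1]{holt_generalising_2012} is in hand, the only point needing care is the translation noted above, namely that ``no two infinite vertex groups adjacent'' collapses to ``no edges'' precisely because all RAAG vertex groups are the single infinite group $\Z$. One could optionally remark that an edge would force a $\Z^2$ subgroup, which is incompatible both with freeness and with hyperbolicity, giving a self-contained cross-check of the equivalence, but this is not required for the proof.
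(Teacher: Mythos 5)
Your proposal is correct and matches the paper's argument: the paper derives this corollary ``immediately'' from \cite[Theorem 5.1]{holt_generalising_2012} in exactly the way you describe (all RAAG vertex groups are $\Z$, hence infinite, so hyperbolicity forces $E(\Gamma) = \varnothing$ and $\RAAG$ is free), with the converse being the standard fact that free groups are hyperbolic. Your extra care over the translation step and the degenerate cases is fine but not needed beyond what the paper leaves implicit.
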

We now consider the two extremes of RAAGs, namely free groups and free abelian groups. Virtually free groups are hyperbolic, and so have solvable conjugacy problem. Alternatively, one can apply the following result along with \cite[Proposition 4.7]{bogopolski_orbit_2009} to solve the conjugacy problem in virtually free groups.

\begin{thm}\label{thm:TCP free}\cite[Theorem 1.5]{bogopolski_conjugacy_2006}
    Let $F$ be a finitely generated free group. Then the twisted conjugacy problem is solvable in $F$.    
\end{thm}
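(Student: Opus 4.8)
The plan is to give a uniform algorithm that, on input $\phi \in \mathrm{Aut}(F)$ and $u,v \in F$, decides whether the equation $v =_F \phi(w)^{-1}uw$ has a solution $w \in F$. The first observation is that the solution set is highly structured: if $w_0$ is one solution, then another element $w$ is a solution precisely when $s := ww_0^{-1}$ satisfies $\phi(s) = usu^{-1}$, that is, when $s \in \mathrm{Fix}(\gamma_{u^{-1}} \circ \phi)$, where $\gamma_{u^{-1}}$ denotes conjugation by $u^{-1}$. Hence the set of twisted conjugators is either empty or a single coset of the fixed subgroup $\mathrm{Fix}(\gamma_{u^{-1}} \circ \phi) \leq F$. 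This splits the task into two parts: (a) compute this fixed subgroup, and (b) decide whether at least one solution exists.

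For task (a), I would invoke the theory of (relative) train track representatives of free group automorphisms due to Bestvina--Handel: the fixed subgroup of any automorphism of a finitely generated free group is finitely generated (indeed of rank at most the rank of $F$), and a free basis can be computed effectively from the automorphism (Maslakova; Bogopolski--Maslakova). Applying this to the automorphism $\gamma_{u^{-1}} \circ \phi$ produces a computable basis of the relevant fixed subgroup, so that the coset structure from part (a) becomes explicit.

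For task (b) I would pass to the cyclic extension $G = F \rtimes_{\phi} \Z$ and first record the elementary identity that, for every $g \in F$, one has $g \sim_{\phi} \phi(g)$: taking $w = g^{-1}$ gives $\phi(g^{-1})^{-1} g g^{-1} = \phi(g)$. A direct calculation of conjugation in $G$ then shows that $u \sim_{\phi} v$ in $F$ if and only if the exponent-one elements $ut$ and $vt$ are conjugate in $G$, the identity above being exactly what removes the spurious ambiguity up to the $\phi$-orbit of $u$. Thus the existence question becomes the conjugacy problem for elements of nonzero $t$-exponent in the free-by-cyclic group $G$, which one attacks geometrically: a relative train track map $f \colon R \to R$ representing $\phi$ on a graph $R$ realises $G$ as the fundamental group of a mapping torus, and exponent-one conjugacy classes correspond to loops crossing the torus once. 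Using bounded cancellation for $f$, one extracts an effective, computable bound on the length of a minimal twisted conjugator $w$, after which a finite search decides (b).

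The main obstacle is precisely this length bound. Bounded cancellation controls the cancellation inside products $\phi(a)\phi(b)$, and hence the interaction of $\phi(w)^{-1}$ with $u$, but the genuinely hard case is when $\phi(w)$ and $w$ share a long common subword, since then the collapse of $\phi(w)^{-1}uw$ down to the short word $v$ is governed by the dynamics of $\phi$ --- its periodic and fixed directions and its attracting lamination --- rather than by any crude metric estimate. Controlling this is exactly the role of the train track machinery, and it dovetails with the coset description from (a): once the minimal conjugator is bounded, combining the finite search in (b) with the computed fixed subgroup in (a) yields the decision procedure and completes the proof.
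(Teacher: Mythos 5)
The paper does not prove this statement at all: it is quoted directly from \cite[Theorem 1.5]{bogopolski_conjugacy_2006}, so the comparison must be made with the proof in that reference. That proof rests on Maslakova's theorem that a finite basis of the fixed subgroup of an automorphism of a finitely generated free group can be computed effectively. Concretely, writing $\psi = \gamma_{u^{-1}} \circ \phi$ (the same auxiliary automorphism appearing in your part (a)), one has $u \sim_{\phi} v$ if and only if $u^{-1}v \in \{\psi(x)^{-1}x \mid x \in F\}$, and membership in this set is decided by extending $\psi$ to an automorphism of the larger free group $F \ast \langle z \rangle$ and inspecting a computed basis of \emph{its} fixed subgroup. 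In other words, the existence question itself is converted into a fixed-subgroup computation; that conversion is the idea your proposal is missing.

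Your proposal has a genuine gap in task (b), and task (a) cannot fill it. The coset description in (a) is correct, but it only parametrizes the solution set once one already knows it is nonempty; it contributes nothing to deciding existence. For (b), the equivalence $u \sim_{\phi} v \Leftrightarrow ut \sim vt$ in $G = F \rtimes_{\phi} \Z$ is correct (cf.\ \cref{cor:CP extensions iff TCP in base}), but reducing to the conjugacy problem in $G$ inverts the logical order of the literature: solvability of the conjugacy problem in free-by-cyclic groups is the main theorem of the very paper being cited, and it is deduced \emph{from} the twisted conjugacy problem in $F$, not the other way around. Your argument is therefore circular unless you supply an independent solution for exponent-one conjugacy in $G$, and the one you offer --- an ``effective, computable bound'' on the length of a minimal twisted conjugator extracted from bounded cancellation of a train track representative --- is asserted rather than proved. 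Your final paragraph concedes exactly this: when $w$ and $\phi(w)$ share long common subwords (for instance when $\mathrm{Fix}(\psi)$ is large), bounded cancellation gives no control over the collapse of $\phi(w)^{-1}uw$, and no bound computable from $|u|+|v|$ is ever derived. This is not a routine detail that can be outsourced to ``train track machinery'': obtaining a decision procedure in the absence of any such conjugator-length bound is precisely the content of the theorem, and it is why the actual proof routes the existence question through the (deep, and historically delicate) computability of fixed subgroups rather than through a bounded search.
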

\comm{
The proof relies on the following result of Maslakova. 

\begin{thm}\cite[Theorem 1.1]{bogopolski_algorithm_2016}
    There exists an algorithm to compute a finite generating set for the fixed point subgroup of an arbitrary automorphism of a free group of finite rank.
\end{thm}
We note here an updated reference due to an incomplete proof in the original result (see the following note: \url{https://enric-ventura.staff.upc.edu/ventura/files/13comment.pdf}). }
\comm{
Whilst there are recent results which show that fixed point subgroups are finitely generated in RAAGs for untwisted automorphisms \cite{fioravanti_coarse-median_2023}, it remains open whether there exists an algorithm to compute such a finite generating set, and so currently we cannot use a similar technique to prove the twisted conjugacy problem in RAAGs.}

The opposite extreme for RAAGs also has solvable twisted conjugacy problem. Indeed, if a RAAG is free abelian of rank $n$, then the twisted conjugacy problem reduces to solving a system of $n$ linear equations with $n$ unknowns. 

We also mention the extreme cases for RACGs. If our defining graph $\Gamma$ is a complete graph, then our RACG $W_{\Gamma}$ is a finite abelian group, and so has solvable twisted conjugacy problem. On the other hand, if $\Gamma$ is completely disconnected, then $W_{\Gamma}$ is a free product of finite cyclic groups, which is virtually free. Therefore any finite extension is also virtually free, and so is hyperbolic with solvable conjugacy problem. 

\subsection{CAT(0) groups}\label{sec:CAT0}
We refer the reader to \cite{bridson_metric_1999} for details on $\mathrm{CAT}(0)$ spaces and groups. Recall a group action is \emph{geometric} if it is cocompact and properly discontinuous.
\comm{
\begin{defn}\cite{bridson_metric_1999}\label{cat0defn}
Let $(Y,d)$ be a geodesic metric space. Then $(Y,d)$ is a CAT(0) space if the following holds (see \cref{fig:CAT0 triangles}):

Let $a,b,c \in Y$, let $\Delta$ be the geodesic triangle between these 3 points. Let $\overline{\Delta}$ be a comparison triangle for $\Delta$, that is, $\overline{\Delta} \in \mathbb{E}^{2}$ such that $d_{Y}(a,b) = d_{\mathbb{E}^{2}}\left(\overline{a}, \overline{b}\right),$ $d_{Y}(a,c) = d_{\mathbb{E}^{2}}\left(\overline{a}, \overline{c}\right)$, and $d_{Y}(b,c) = d_{\mathbb{E}^{2}}\left(\overline{b}, \overline{c}\right)$. Then for any $x \in [a,b], \; \overline{x} \in \left[\overline{a}, \overline{b}\right]$ such that $d_{Y}(a,x) = d_{\mathbb{E}^{2}}\left(\overline{a}, \overline{x}\right)$, and $y \in [a,c], \; \overline{y} \in [\overline{a}, \overline{c}]$ such that $d_{Y}(a,y) = d_{\mathbb{E}^{2}}(\overline{a}, \overline{y})$, we have that
\[ d_{Y}(x,y) \leq d_{\mathbb{E}^{2}}(\overline{x}, \overline{y}).
\]
\begin{figure}[h]
    \centering

\tikzset{every picture/.style={line width=0.75pt}} %set default line width to 0.75pt        

\begin{tikzpicture}[x=0.75pt,y=0.75pt,yscale=-1,xscale=1]
%uncomment if require: \path (0,300); %set diagram left start at 0, and has height of 300

%Shape: Triangle [id:dp7113717466429414] 
\draw   (399.01,81.16) -- (411.78,200.42) -- (293.89,141.25) -- cycle ;
%Curve Lines [id:da46295843688308214] 
\draw    (110,143.25) .. controls (158.5,126.75) and (180.67,113.83) .. (201,80.83) ;
%Curve Lines [id:da2207628811133] 
\draw    (110,143.25) .. controls (169.67,152.17) and (192.6,159) .. (240,200.25) ;
%Curve Lines [id:da9301274454252257] 
\draw    (201,80.83) .. controls (205.33,126.83) and (216.33,159.5) .. (240,200.25) ;
%Shape: Circle [id:dp2459278960268254] 
\draw  [fill={rgb, 255:red, 0; green, 0; blue, 0 }  ,fill opacity=1 ] (143.33,129.63) .. controls (143.33,128.87) and (143.95,128.25) .. (144.71,128.25) .. controls (145.47,128.25) and (146.08,128.87) .. (146.08,129.63) .. controls (146.08,130.38) and (145.47,131) .. (144.71,131) .. controls (143.95,131) and (143.33,130.38) .. (143.33,129.63) -- cycle ;
%Shape: Circle [id:dp4773640315972849] 
\draw  [fill={rgb, 255:red, 0; green, 0; blue, 0 }  ,fill opacity=1 ] (150.67,150.96) .. controls (150.67,150.2) and (151.28,149.58) .. (152.04,149.58) .. controls (152.8,149.58) and (153.42,150.2) .. (153.42,150.96) .. controls (153.42,151.72) and (152.8,152.33) .. (152.04,152.33) .. controls (151.28,152.33) and (150.67,151.72) .. (150.67,150.96) -- cycle ;
%Shape: Circle [id:dp6610203565799415] 
\draw  [fill={rgb, 255:red, 0; green, 0; blue, 0 }  ,fill opacity=1 ] (330.83,160.13) .. controls (330.83,159.37) and (331.45,158.75) .. (332.21,158.75) .. controls (332.97,158.75) and (333.58,159.37) .. (333.58,160.13) .. controls (333.58,160.88) and (332.97,161.5) .. (332.21,161.5) .. controls (331.45,161.5) and (330.83,160.88) .. (330.83,160.13) -- cycle ;
%Shape: Circle [id:dp8103227303189788] 
\draw  [fill={rgb, 255:red, 0; green, 0; blue, 0 }  ,fill opacity=1 ] (324.83,122.79) .. controls (324.83,122.03) and (325.45,121.42) .. (326.21,121.42) .. controls (326.97,121.42) and (327.58,122.03) .. (327.58,122.79) .. controls (327.58,123.55) and (326.97,124.17) .. (326.21,124.17) .. controls (325.45,124.17) and (324.83,123.55) .. (324.83,122.79) -- cycle ;
%Straight Lines [id:da35912430620190317] 
\draw    (144.71,128.25) -- (152.04,150.96) ;
%Straight Lines [id:da6127112814333937] 
\draw    (326.21,122.79) -- (332.21,160.13) ;
%Straight Lines [id:da21309057247838958] 
\draw    (177.6,163.4) -- (182.2,156.2) ;
%Straight Lines [id:da117944110830835] 
\draw    (358.8,178.2) -- (363.4,171) ;
%Straight Lines [id:da6936015594530278] 
\draw    (160.2,116.37) -- (165.8,122.77) ;
%Straight Lines [id:da13194064148313145] 
\draw    (157.8,118.2) -- (163.8,124.6) ;
%Straight Lines [id:da3783717540289122] 
\draw    (342.8,108.2) -- (348.6,115) ;
%Straight Lines [id:da6888034534068974] 
\draw    (345.14,106.2) -- (349.31,111.38) -- (350.94,113.4) ;
%Straight Lines [id:da24733288526058805] 
\draw    (204.4,129.8) -- (214.2,127.8) ;
%Straight Lines [id:da25150312965518395] 
\draw    (206,135) -- (215.8,133) ;
%Straight Lines [id:da9689579078649693] 
\draw    (205.2,132.6) -- (215,130.6) ;
%Straight Lines [id:da06708057223643182] 
\draw    (400.8,139.4) -- (410.6,137.4) ;
%Straight Lines [id:da3787024269342669] 
\draw    (400.4,137) -- (410.2,135) ;
%Straight Lines [id:da37152197262591535] 
\draw    (400,134.6) -- (409.8,132.6) ;

% Text Node
\draw (143.87,215.27) node [anchor=north west][inner sep=0.75pt]   [align=left] {(a) $\displaystyle \Delta \in \ Y$};
% Text Node
\draw (319.97,214) node [anchor=north west][inner sep=0.75pt]   [align=left] {(b) $\displaystyle \overline{\Delta} \in \ \mathbb{E}^{2}$};
% Text Node
\draw (96.93,140.13) node [anchor=north west][inner sep=0.75pt]  [font=\footnotesize] [align=left] {$\displaystyle a$};
% Text Node
\draw (239.87,194.33) node [anchor=north west][inner sep=0.75pt]  [font=\footnotesize] [align=left] {$\displaystyle b$};
% Text Node
\draw (197.4,65.4) node [anchor=north west][inner sep=0.75pt]  [font=\footnotesize] [align=left] {$\displaystyle c$};
% Text Node
\draw (283.33,133.73) node [anchor=north west][inner sep=0.75pt]  [font=\footnotesize] [align=left] {$\displaystyle \overline{a}$};
% Text Node
\draw (411.67,192.47) node [anchor=north west][inner sep=0.75pt]  [font=\footnotesize] [align=left] {$\displaystyle \overline{b}$};
% Text Node
\draw (394.33,66.6) node [anchor=north west][inner sep=0.75pt]  [font=\footnotesize] [align=left] {$\displaystyle \overline{c}$};
% Text Node
\draw (146.4,155.73) node [anchor=north west][inner sep=0.75pt]  [font=\footnotesize] [align=left] {$\displaystyle x$};
% Text Node
\draw (137.33,112.93) node [anchor=north west][inner sep=0.75pt]  [font=\footnotesize] [align=left] {$\displaystyle y$};
% Text Node
\draw (323.33,160.6) node [anchor=north west][inner sep=0.75pt]  [font=\footnotesize] [align=left] {$\displaystyle \overline{x}$};
% Text Node
\draw (319,107.2) node [anchor=north west][inner sep=0.75pt]  [font=\footnotesize] [align=left] {$\displaystyle \overline{y}$};

\end{tikzpicture}

    \caption{$\mathrm{CAT}(0)$ inequality}
    \label{fig:CAT0 triangles}
\end{figure}

\end{defn}
%\includegraphics[scale = 0.4]{CAT(0)_pic .jpg}
%Add figure
%\begin{figure}[h]
 %   \input{triangles}
%\end{figure}
\comm{
\begin{figure}[h]
\begin{subfigure}{0.5\textwidth}
\centering
\begin{tikzpicture}[xscale=4,yscale=3,rotate=45]
%triangle sides
	\draw [black,postaction={decoration={markings,mark=at position 0.5 with {\arrow{|}}},decorate}] (0,0) coordinate[label=below:$b$](B) arc[start angle=0,end angle=60,radius=1];
	\draw [black,postaction={decoration={markings,mark=at position 0.49 with {\arrow{|}}},decorate},postaction={decoration={markings,mark=at position 0.51 with {\arrow{|}}},decorate}] ({cos(60)},{sin(60)}) coordinate[label=right:$c$](C) arc[start angle=300,end angle=240,radius=1] coordinate[label=left:$a$](A);
	\draw [black,postaction={decoration={markings,mark=at position 0.48 with {\arrow{|}}},decorate},postaction={decoration={markings,mark=at position 0.5 with {\arrow{|}}},decorate},postaction={decoration={markings,mark=at position 0.52 with {\arrow{|}}},decorate}] ({cos(60)},{sin(60)}) arc[start angle=120,end angle=180,radius=1];
%x and y
    \draw [draw=none] (0,0) arc[start angle=0,end angle=40,radius=1] coordinate[circle, fill, inner sep=1pt,label=below:$x$](X);
    \draw [draw=none] ({cos(60)},{sin(60)}) arc[start angle=300,end angle=260,radius=1] coordinate[circle, fill, inner sep=1pt,label=above:$y$](Y);
    \draw [black] (X) -- (Y);
\end{tikzpicture}
\end{subfigure}
~
\begin{subfigure}{0.5\textwidth}
\centering
\begin{tikzpicture}[xscale=4,yscale=3,rotate=45]
	\draw [black,postaction={decoration={markings,mark=at position 0.5 with {\arrow{|}}},decorate}] ({-cos(60)},{sin(60)}) coordinate[label=left:$\overline{a}$](A) --(0,0) coordinate[label=below:$\overline{b}$](B);
	\draw [black,postaction={decoration={markings,mark=at position 0.48 with {\arrow{|}}},decorate},postaction={decoration={markings,mark=at position 0.5 with {\arrow{|}}},decorate},postaction={decoration={markings,mark=at position 0.52 with {\arrow{|}}},decorate}] (B) -- ({cos(60)},{sin(60)}) coordinate[label=right:$\overline{c}$](C);
	\draw [black,postaction={decoration={markings,mark=at position 0.49 with {\arrow{|}}},decorate},postaction={decoration={markings,mark=at position 0.51 with {\arrow{|}}},decorate}] (C) -- (A);
	\coordinate [circle, fill, inner sep=1pt,label=below:$\overline{x}$](X) at ({-2/3*cos(60)},{2/3*sin(60)});
	\coordinate [circle, fill, inner sep=1pt,label=above:$\overline{y}$](Y) at ({-1/3*cos(60)},{sin(60)});
	\draw [black] (X) -- (Y);
\end{tikzpicture}
\end{subfigure}
~
\begin{minipage}{.5\textwidth}
	\leavevmode\subcaption{$\Delta\in X$}
\end{minipage}
\begin{minipage}{.5\textwidth}
	\leavevmode\subcaption{$\overline{\Delta}\in\mathbb{E}^2$}
\end{minipage}
\end{figure}}}

\begin{defn}\label{def:CAT(0) group}
    Let $G$ be a finitely generated group. We say $G$ is a \emph{$\mathrm{CAT}(0)$ group} if $G$ acts faithfully and geometrically on a CAT(0) metric space. 
\end{defn}
Both RAAGs and RACGs are examples of $\mathrm{CAT}(0)$ groups. We have the following motivating result for finite extensions of right-angled Coxeter groups.
\begin{thm}\label{thm:RACGs CAT}\cite[Theorem 1]{cunningham_rm_2016}
    Let $W_{\Gamma}$ be a RACG and $\phi \in \mathrm{Aut}(W_{\Gamma})$ be either a graph automorphism, partial conjugation, or a transvection. Let $m < \infty$ denote the order of $\phi$. Then the finite extension $G = W \rtimes_{\phi} \Z / m\Z$ is a $\mathrm{CAT}(0)$ group. 
\end{thm}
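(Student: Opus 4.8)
The plan is to exhibit, for each of the three automorphism types, a $\mathrm{CAT}(0)$ space on which $G = W_{\Gamma} \rtimes_{\phi} \Z/m\Z$ acts geometrically, and then invoke \cref{def:CAT(0) group}. The natural model to start from is the Davis--Moussong complex $\Sigma_{\Gamma}$ of $W_{\Gamma}$: with the piecewise Euclidean Moussong metric it is $\mathrm{CAT}(0)$, and $W_{\Gamma}$ acts on it faithfully and geometrically. The whole problem then reduces to realising the generator $t$ of $\Z/m\Z$ as a finite-order isometry $\tau$ of a suitable $\mathrm{CAT}(0)$ space satisfying $\tau w \tau^{-1} = \phi(w)$ for all $w \in W_{\Gamma}$; given such a $\tau$, the group $\langle W_{\Gamma}, \tau \rangle \cong G$ inherits a geometric action, and the additional finite factor preserves proper discontinuity, cocompactness, and (since the deck action is free) faithfulness.

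The graph automorphism case is the easy one. A symmetry $\sigma$ of $\Gamma$ permutes the vertex set preserving edges, hence permutes the generating reflections of $W_{\Gamma}$ and induces a cellular automorphism $\hat{\sigma}$ of $\Sigma_{\Gamma}$ carrying cells of each type isometrically to cells of the same type; thus $\hat{\sigma}$ preserves the Moussong metric and has the same finite order as $\sigma$. Taking $\tau = \hat{\sigma}$ gives $\tau s \tau^{-1} = \phi(s)$ on generators, so $G$ acts geometrically on the $\mathrm{CAT}(0)$ complex $\Sigma_{\Gamma}$ itself.

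The transvection and partial conjugation cases are genuinely harder, and here I would abandon the hope of acting on $\Sigma_{\Gamma}$ directly. A transvection $x \mapsto xy$ is not length-preserving --- it sends the reflection $x$ to the length-two element $xy$ --- so it cannot be realised by any metric isometry of $\Sigma_{\Gamma}$ fixing the cell structure; likewise a partial conjugation $p \mapsto xpx$ conjugates only some of the reflections and so is not induced by a graph symmetry. Instead I would build an explicit nonpositively curved piecewise Euclidean complex $Y$ for $G$, modelled on a mapping-torus-style construction that glues $\Sigma_{\Gamma}$ to a $\phi$-twisted copy of itself, and then verify Gromov's link condition vertex by vertex, so that the universal cover $\widetilde{Y}$ is $\mathrm{CAT}(0)$ and $G$ acts on it geometrically. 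The observation that in a RACG both transvections and partial conjugations are involutions (every generator has order two, forcing $\phi^{2} = \mathrm{id}$) keeps this construction to a single twisted gluing rather than an $m$-fold tower.

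The main obstacle is precisely this last step: checking that the glued complex is nonpositively curved, that is, that no link acquires an essential loop of length $< 2\pi$ once the twist is introduced. This must be carried out through a case analysis keyed to the combinatorics of $\mathrm{Lk}(x) \subseteq \mathrm{St}(y)$ for transvections and of the conjugated components $P$ for partial conjugations, since it is exactly the interaction of the twist with the local geometry of $\Gamma$ that can, a priori, destroy the curvature bound. Once the link condition is verified in every case, \cref{def:CAT(0) group} yields that $G$ is a $\mathrm{CAT}(0)$ group, which --- via \cite[III.$\Gamma$, Theorem 1.12]{bridson_metric_1999} --- is exactly the input that feeds the solvability of the conjugacy problem for these extensions.
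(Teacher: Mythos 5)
First, a caveat about the comparison itself: the paper does not prove this statement --- it is imported wholesale from \cite[Theorem 1]{cunningham_rm_2016} --- so there is no in-paper proof to measure your attempt against. The closest internal argument is the proof of \cref{thm: CAT(0) extension RAAG}, which treats the RAAG analogue for \emph{length-preserving} automorphisms by extending the $\RAAG$-action on the Salvetti complex. Your graph-automorphism case is exactly the RACG version of that argument (Davis--Moussong complex in place of the Salvetti complex), and your general reduction --- find a finite-order isometry $\tau$ of a $\mathrm{CAT}(0)$ space, compatible with the $W_{\Gamma}$-action and inducing $\phi$ --- is the same reduction the paper uses. That part is correct, as is your observation that transvections and partial conjugations of a RACG are involutions.

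The gap is in the two hard cases, and it is structural rather than a missing computation. First, your framework --- build a compact nonpositively curved complex $Y$ ``for $G$'', pass to the universal cover $\widetilde{Y}$, and obtain faithfulness ``since the deck action is free'' --- cannot work for \emph{any} RACG extension: $W_{\Gamma}$ is generated by involutions and $t$ has finite order, so $G$ has torsion; by the Cartan fixed-point theorem every finite-order isometry of a complete $\mathrm{CAT}(0)$ space fixes a point, so any group acting freely on a $\mathrm{CAT}(0)$ space (in particular, acting by deck transformations on a $\mathrm{CAT}(0)$ universal cover) is torsion-free. $W_{\Gamma}$ is a $\mathrm{CAT}(0)$ group precisely because its action on the Davis complex is proper but \emph{not} free; any correct construction must likewise produce a proper non-free action (equivariant gluing, or a complex of groups in the sense of Bridson--Haefliger), not a covering-space argument. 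Second, even setting torsion aside, the object you propose to glue is not defined: as you yourself note, a transvection or partial conjugation is not length-preserving and does not permute the spherical cosets indexing the cells of $\Sigma_{\Gamma}$, so it induces no cellular or isometric self-map of $\Sigma_{\Gamma}$ --- indeed the vertex map $w \mapsto \phi(w)$ does not even extend to a graph map of the $1$-skeleton, since $\phi(w)$ and $\phi(ws)$ need not be adjacent. A mapping-torus construction requires the monodromy to be a self-map of the fibre, so ``a $\phi$-twisted copy of $\Sigma_{\Gamma}$ glued to $\Sigma_{\Gamma}$'' has no meaning, and there is no complex on whose links the curvature condition could be checked. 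What remains of your proposal for these cases is ``construct some complex and verify nonpositive curvature'', which is precisely the mathematical content of the theorem; it is the part that \cite{cunningham_rm_2016} actually carries out, with bespoke $\mathrm{CAT}(0)$ models adapted to each automorphism type.
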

We can extend this result to RAAGs for all length-preserving automorphisms.

\CATzero

The following is then immediate since $\mathrm{CAT}(0)$ groups have solvable conjugacy problem \cite[III.$\Gamma$, Theorem 1.12]{bridson_metric_1999}.

\CATzeroresult
To prove \cref{thm: CAT(0) extension RAAG}, we refer the reader to \cite{schwer_lecture_2019} for information on $\mathrm{CAT}(0)$ cube complexes. In particular, a cube complex is $\mathrm{CAT}(0)$ if it is simply connected and all vertex links are flag.

\begin{proof}[Proof of \cref{thm: CAT(0) extension RAAG}]
    Let $S$ be the Salvetti complex of the RAAG defined by $\Gamma$. It is well known that $S$ is a $\mathrm{CAT}(0)$ cube complex, and that any RAAG $\RAAG$ acts geometrically on its corresponding Salvetti complex. Moreover, the 1-skeleton $S^{1}$ is precisely the Cayley graph $\mathrm{Cay}(\RAAG, X)$ (recall $X = V(\Gamma)^{\pm}$), where two vertices $v_{u}, v_{ug} \in S^{1}$ are adjacent if and only if $g \in X$. The aim is to extend this action to a geometric action of the extension $A_{\phi}$ on the complex $S$. We let $\Phi_{s} \colon v_{u} \mapsto v_{su}$ denote the left-multiplication action, for all $s \in \RAAG$, and let $m$ be the order of $\phi$. 
    \comm{
    First let $\phi$ be an inversion. By definition, for any group element $u \in \RAAG$, the vertices $v_{u}$ and $v_{gu}$ are connected by an edge in the 1-skeleton $X^{1}$ if and only if $g \in S$. Therefore the vertices $v_{\phi(u)}$ and $v_{\phi(gu)}$ are connected by an edge in $X^{1}$ if and only if $\phi(g) \in S$. However $\phi(g) \in \{ g, g^{-1} \}$, and if $g \in S$, then $g^{-1} \in S$. Hence the map $\Phi \colon v_{w} \mapsto v_{\phi(w)}$ preserves adjacency, and so $\Phi \in \mathrm{Isom}(X)$. \\

    We now consider graph automorphisms. Since graph automorphisms map bijectively edges to edges, we have that $v_{\phi(w)}, v_{\phi(u)}$ are connected by an edge in $X^{1}$ if and only if $\phi(w)^{-1}\phi(u) \in S$. Hence the map $\Phi \colon v_{w} \mapsto v_{\phi(w)}$ preserves adjacency in $X^{1}$, and so $\Phi \in \mathrm{Isom}(X)$.}
    
    We first show that the adjacency relation is preserved by the map $v_{w} \mapsto v_{\phi(w)}$. Note that since $\phi$ is length-preserving, then $g \in X$ if and only if $\phi(g) \in X$. Therefore vertices $v_{\phi(u)}$ and $v_{\phi(ug)}$ are joined by an edge in $S^{1}$ if and only if $\phi(g) \in X$ as required. Note by construction that any permutation of the vertex set $S^{1}$ which respects the adjacency relation determines an isometry of the complex $S$. Therefore the map $v_{w} \mapsto v_{\phi(w)}$ determines an isometry $\Phi \colon v_{w} \mapsto v_{\phi(w)}$ of $S$. We now define a map from the generators of $A_{\phi}$ to the isometry group of $S$ by the following rule:
\[ s_{i} \mapsto \Phi_{s_{i}} \; \text{for all} \; s_{i} \in X, \; t \mapsto \Phi.
\]
We check the relations from the presentation for $A_{\phi}$ given in \cref{semidirect1}:
\begin{align*}
     v_{w} &\mapsto v_{[s_{i},s_{j}]w} = v_{w} \; \text{for all} \; \{s_{i},s_{j}\} \in E(\Gamma) \Rightarrow [\Phi_{s_{i}}, \Phi_{s_{j}}] = 1,\\
     v_{w} &\mapsto v_{\phi^{m}(w)} = v_{w} \Rightarrow \Phi^{m} = 1, \\
     v_{w} &\mapsto v_{\phi(s_{i}\phi^{-1}(w))} = v_{\phi(s_{i}) \cdot \phi(\phi^{-1}(w))} = v_{\phi(s_{i})w} \Rightarrow \Phi \Phi_{s_{i}} \Phi^{-1} = \Phi_{\phi(s_{i})},
\end{align*}
and so the group relations are satisfied. Hence this rule determines an action $A_{\phi} \curvearrowright S$. We now want to show this action is a faithful geometric action. Note the action $\RAAG \curvearrowright S$ is a faithful geometric action, so it remains to check the following cases for $\Phi$. 

If $\phi(x) = x$ for all $x \in S^{1}$, then $\phi$ is trivial, which gives us a faithful action. Let $\Phi_{g}\Phi^{a} \in \mathrm{Isom}(S)$ (corresponding to the group element $gt^{a} \in A_{\phi}$). For all $w \in S^{1}$, we have
\[ \Phi_{g}\Phi^{a} \colon v_{w} \mapsto v_{g\phi^{a}(w)}
\]
where $g\phi^{a}(w) \in S^{1}$. Therefore if the action $A_{\phi} \curvearrowright S$ is not properly discontinuous, then the action $\RAAG \curvearrowright S$ must also not be properly discontinuous, which is a contradiction. Finally, cobounded follows since $\RAAG \leq A_{\phi}$, so for any $s \in S$, $r>0$, we can choose $g \in \RAAG \leq A_{\phi}$ such that $x \in B(gx_{0}, r)$. Hence the action $A_{\phi} \curvearrowright S$ is a faithful, geometric action by isometries, and so $A_{\phi}$ is a CAT(0) group.
\end{proof}
We can use this geometric property to study the conjugacy growth series of virtual RAAGs and RACGs (recall \cref{sec:conjugacy growth}). The following definitions can be found for example in \cite{gekhtman_counting_2022}.
\begin{defn}
    Let $(Y,d)$ be a geodesic metric space. Given $y \in Y$ and a subset $Z \subseteq Y$, let $\pi_{Z}(y)$ be the set of points $z \in Z$ such that $d(y,z) = d(y, Z)$. The \emph{projection} of a subset $A \subseteq Y$ to $Z$ is defined as 
    \[ \pi_{Z}(A) := \bigcup_{a \in A} \pi_{Z}(a).
    \]
    For a given $C \geq 1$, a subset $Z \subseteq Y$ is \emph{$C$-contracting} if for any geodesic $\gamma$, with $d(\gamma, Z) \geq C$, we have $d^{\pi}_{Z}(\gamma) \leq C$, where $d^{\pi}_{Z}(\gamma) := \mathrm{diam}(\pi_{Z}(\gamma))$ is the diameter of the projection of $\gamma$ to $X$.
\end{defn}

\begin{defn}\label{defn:contracting}
    Let $G$ be a group which acts properly by isometries on a geodesic metric space $(Y,d)$. An element $h \in G$ is \emph{contracting} if for some basepoint $o \in Y$, the orbit $\langle h \rangle\cdot o$ is $C$-contracting (for some $C \geq 1$), and the map 
    \begin{align*}
        \Phi \colon \Z &\rightarrow Y \\
        n &\mapsto h^{n}\cdot o
    \end{align*}
    is a quasi-isometric embedding. 
\end{defn}
Here a non-elementary group is one which is not virtually cyclic.

\begin{thm}\label{thm:contracting conj growth}\cite[Corollary 1.8]{gekhtman_counting_2022}
    Let $G$ be a non-elementary group with a finite generating set $X$. If $G$ has a contracting element, with respect to the action on $\mathrm{Cay}(G,X)$, then the conjugacy growth series is transcendental. 
\end{thm}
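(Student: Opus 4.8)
The plan is to deduce transcendence of the conjugacy growth series $C(z)$ from a sharp two-sided estimate on the conjugacy growth function $c(n)$, and then to invoke a classical analytic obstruction to algebraicity. Throughout, write $\lambda = \lim_{n\to\infty}\#\{g \in G : |g| \leq n\}^{1/n}$ for the exponential growth rate of $G$ with respect to $X$. The first task is to pin down the growth of $G$ from the hypotheses. Since $G$ is non-elementary and acts properly on $\mathrm{Cay}(G,X)$ with a contracting element $h$, a ping-pong argument built from the contracting orbit (an \emph{Extension Lemma} for contracting elements, in the sense of Yang) produces free sub-semigroups generated by conjugates and powers of $h$. This forces $\lambda > 1$ and, in this framework, a \emph{purely exponential} sphere estimate $\#\{g : |g| = n\} \asymp \lambda^{n}$, where the contracting element supplies the matching lower bound.

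The technical heart is to establish $c(n) \asymp \lambda^{n}/n$. For the upper bound, a cyclically reduced representative of a conjugacy class of length $n$ has on the order of $n$ distinct conjugates of comparable length (roughly its cyclic rearrangements along the contracting axis), so passing from elements to conjugacy classes deflates the count by a factor $\asymp n$; combined with $\#\{g : |g| = n\}\asymp\lambda^{n}$ this yields $c(n) = O(\lambda^{n}/n)$. For the lower bound one manufactures an abundance of pairwise non-conjugate elements with tightly controlled minimal length: using the contracting orbit and the Extension Lemma, distinct admissible words over a fixed alphabet of contracting pieces give genuinely distinct conjugacy classes whose conjugacy length $|g|_c$ is computable up to bounded additive error (a growth-tightness / statistical-convexity phenomenon), producing $c(n) = \Omega(\lambda^{n}/n)$.

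Finally I would apply a transcendence criterion of Flajolet type. Since the coefficients $c(n)$ are non-negative, Pringsheim's theorem guarantees that the radius of convergence $z = 1/\lambda$ is itself a singularity of $C(z)$, so the local behaviour there governs the coefficient asymptotics. Now a power series algebraic over $\Q(z)$ has coefficients of the form $\sum_i C_i \beta_i^{\,n} n^{s_i}$ with $s_i \in \Q$, arising from algebraic (Puiseux) singularities $(1 - z/\rho)^{s}$ whose coefficients grow like $\rho^{-n} n^{-s-1}$; the exponent value $-1$ is unattainable, since $-s-1 = -1$ forces $s = 0$ and hence no singularity at all. A genuine $n^{-1}$ factor can therefore only come from a logarithmic singularity, which is incompatible with algebraicity. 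Because $c(n) \asymp \lambda^{n}/n$ exhibits exactly this forbidden $n^{-1}$ decay at $z = 1/\lambda$, the series $C(z)$ is not algebraic, i.e.\ it is transcendental.

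I expect the main obstacle to be the lower bound $c(n) = \Omega(\lambda^{n}/n)$ of the second paragraph: one must produce enough pairwise non-conjugate classes while controlling their \emph{conjugacy} lengths $|g|_c$ to within a bounded additive error, and ensuring that distinct combinatorial data neither collapse to the same class nor to a shorter length is precisely where the contracting hypothesis is indispensable. The analytic step is comparatively routine once the $\asymp \lambda^{n}/n$ asymptotic is in hand.
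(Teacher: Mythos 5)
This statement is not proved in the paper at all: it is quoted verbatim as \cite[Corollary 1.8]{gekhtman_counting_2022}, and the paper's role for it is purely that of a citation, so the only meaningful comparison is against the proof in that reference. Your sketch does reconstruct the architecture of that proof: Gekhtman--Yang first establish the two-sided estimate $c(n) \asymp \lambda^{n}/n$ for statistically convex-cocompact actions with a contracting element (the action of $G$ on $\mathrm{Cay}(G,X)$ is cocompact, hence SCC, which is what gives purely exponential growth and the matching lower bound), with the upper bound coming from the fact that a class of conjugacy length $n$ contains on the order of $n$ distinct elements of comparable length, and the lower bound from the Extension Lemma/growth-tightness machinery; transcendence is then deduced from a Flajolet-type criterion, exactly as you propose. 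Two caveats on your write-up. First, the distinctness of the $\sim n$ ``cyclic rearrangements'' inside a class is precisely where the contracting hypothesis enters (in a general group these conjugates can collapse), and you correctly identify this, together with the lower bound on $c(n)$, as the technical heart. Second, your analytic step is too quick as stated: a two-sided bound $\asymp$ does not pin down a single asymptotic form, and an algebraic series can have several dominant singularities on its circle of convergence, producing coefficient asymptotics that oscillate among finitely many terms $C_{i}\beta_{i}^{n}n^{s_{i}}$ along arithmetic progressions; the forbidden-exponent argument must therefore be run on each progression separately, which is exactly what Flajolet's theorem does. Invoking that theorem as a black box (rather than the heuristic one-singularity Puiseux argument you give) is what closes this gap, and it is how the cited proof proceeds.
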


We now collect results from the literature to establish when finite extensions of RAAGs and RACGs have contracting elements, and so have transcendental conjugacy growth series. An isometry $g \in \mathrm{Isom}(X)$ is called $\emph{rank-1}$ if some axis of $g$ does not bound a half-plane. 

\begin{thm}\label{thm:rank 1 cat0 contract}\cite[Theorem 5.4]{bestvina_characterization_2009}
    Rank-1 elements in CAT(0) groups are contracting.
\end{thm}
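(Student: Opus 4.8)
The plan is to prove the statement geometrically on the CAT(0) space $Y$ on which $G$ acts geometrically, and to reduce the contracting property of the element $g$ to the contracting property of one of its axes. Let $g \in G$ be rank-1. Then $g$ is a semisimple (hyperbolic) isometry of $Y$ with strictly positive translation length $\tau$, and it admits an axis $\gamma$ — a $\langle g \rangle$-invariant geodesic line on which $g$ acts as translation by $\tau$ — which by hypothesis bounds no flat half-plane. Fixing a basepoint $o \in Y$, the orbit $\langle g \rangle \cdot o$ lies within bounded Hausdorff distance of $\gamma$, and the map $n \mapsto g^{n}\cdot o$ is a quasi-isometric embedding because $d(g^{n}o, g^{m}o)$ is comparable to $|n-m|\tau$ (the standard estimate for a hyperbolic isometry). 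This settles the quasi-isometric embedding clause of \cref{defn:contracting}, and since being $C$-contracting is stable under replacing a set by one at bounded Hausdorff distance, it suffices to show that the axis $\gamma$ itself is $C$-contracting for some $C \geq 1$.

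Next I would record the two CAT(0) facts that drive everything: the nearest-point projection $\pi = \pi_{\gamma}$ onto the convex set $\gamma$ is well defined and $1$-Lipschitz, and the function $y \mapsto d(y,\gamma)$ is convex along geodesics. These give the monotonicity needed to control projections of geodesic segments. The core of the argument is the contrapositive: assume $\gamma$ is \emph{not} contracting and manufacture a flat half-plane bounded by $\gamma$, contradicting the rank-$1$ hypothesis. Concretely, if no constant works, then taking $C = n$ yields for each $n$ a geodesic $\sigma_{n}$ with $d(\sigma_{n},\gamma) \geq n$ yet $\mathrm{diam}(\pi_{\gamma}(\sigma_{n})) > n$. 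Picking $x_{n}, y_{n} \in \sigma_{n}$ whose projections are nearly the extreme points of $\pi_{\gamma}(\sigma_{n})$, and translating by a suitable power $g^{k_{n}}$ (which preserves $\gamma$ and slides along it), I may assume the midpoint of $[\pi_{\gamma}(x_{n}), \pi_{\gamma}(y_{n})]$ sits within $\tau$ of $o$. Properness of $Y$ (available since $G$ acts geometrically) then lets me apply an Arzelà–Ascoli argument to the perpendicular segments $[\pi_{\gamma}(x_{n}), x_{n}]$ of length $\geq n$, extracting in the limit a geodesic ray $\rho$ issuing perpendicularly from $\gamma$ near $o$. Running the same limiting procedure on the full configuration of segments $\sigma_{n}$ produces a geodesic line lying at positive distance from $\gamma$ whose projection has infinite diameter, i.e. a line running parallel to $\gamma$ over an unbounded length on one fixed side.

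Convexity of the metric together with the Flat Strip Theorem \cite[II.2.13]{bridson_metric_1999} then upgrades this pair of asymptotically parallel geodesics into a flat strip, and since the widths $d(\sigma_{n},\gamma) \geq n$ and projection diameters both tend to infinity, the limiting strip has infinite width, giving a genuine flat half-plane isometric to $\{(s,u) : u \geq 0\} \subset \mathbb{E}^{2}$ with boundary line $\gamma$. This contradicts the assumption that $\gamma$ is rank-$1$, so $\gamma$ — and hence $g$ — is contracting.

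The main obstacle is precisely this limiting step: normalizing the diverging family $\{\sigma_{n}\}$ by the $\langle g \rangle$-translation along $\gamma$ and then passing to a pointed limit that survives without degenerating onto $\gamma$ or escaping to infinity. The delicate point is to guarantee the limit sits at \emph{positive} distance from $\gamma$ while retaining unbounded projection, so that it spans a half-plane rather than collapsing to a single line through $\gamma$; verifying flatness of the limit via the convexity and Flat Strip machinery is where properness of $Y$ is essential.
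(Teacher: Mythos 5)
You should first note that the paper contains no proof of this statement at all: it is imported verbatim from \cite[Theorem 5.4]{bestvina_characterization_2009}, so your proposal can only be measured against the argument in the cited literature (Ballmann's work on rank-one geodesics and its descendants in \cite{bestvina_characterization_2009} and \cite{charney_contracting_2015}). Your preliminary reductions are correct and match that argument's setup: a rank-one element is hyperbolic with an axis $\gamma$, the orbit $\langle g\rangle\cdot o$ lies at bounded Hausdorff distance from $\gamma$, the map $n\mapsto g^{n}o$ is a quasi-isometric embedding, and the contracting property of \cref{defn:contracting} is stable under bounded Hausdorff distance (a standard lemma, though it deserves a proof). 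So everything rests on the contrapositive claim that a non-contracting axis bounds a flat half-plane, which is also how the cited proof proceeds.

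That is where your proposal has a genuine gap, which your own final paragraph concedes without repairing, and the specific limiting procedure you describe cannot be carried out. The witnesses to non-contraction are geodesics $\sigma_{n}$ with $d(\sigma_{n},\gamma)\geq n$ and $\mathrm{diam}\,\pi_{\gamma}(\sigma_{n})>n$. Translating by powers of $g$ slides points along $\gamma$ but leaves distances to $\gamma$ unchanged, so every point of every $\sigma_{n}$ remains at distance $\geq n$ from $\gamma$, hence at distance $\geq n$ from any fixed basepoint; the family $\{\sigma_{n}\}$ leaves every compact set and admits no pointed limit (symmetrically, a limit pointed along the $\sigma_{n}$ loses $\gamma$, since $d(\sigma_{n},\gamma)\to\infty$ --- one can never retain both objects in a single limit). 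Your perpendiculars fare no better under your normalization: centring the midpoint of $[\pi_{\gamma}(x_{n}),\pi_{\gamma}(y_{n})]$ at $o$ pushes the feet $\pi_{\gamma}(x_{n})$, $\pi_{\gamma}(y_{n})$ to distance roughly $n/2$ from $o$, so Arzel\`a--Ascoli does not apply to the segments $[\pi_{\gamma}(x_{n}),x_{n}]$ either. Finally, even granting a limit line whose projection to $\gamma$ has infinite diameter, that is not sufficient for the Flat Strip Theorem: in $\mathbb{E}^{2}$ every non-perpendicular line has unbounded projection onto $\gamma$ without being parallel to it; the theorem requires a line at \emph{finite Hausdorff distance} from $\gamma$, i.e.\ a two-sided height bound that your construction never produces. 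The actual content of the cited theorem is precisely the missing construction: using convexity of $d(\cdot,\gamma)$, the fact that the nearest-point projection is constant along a perpendicular, and an intermediate-value argument, one manufactures geodesic segments joining the two perpendiculars whose distance to $\gamma$ is pinched between a prescribed width $w$ and a constant depending only on $w$, while their projections remain longer than $n$; only such pinched segments can be recentred by $g$-powers and limited (via properness) to a line parallel to $\gamma$, producing flat strips of width $\geq w$ for every $w$, whose limit is the half-plane. Without that two-sided control your limiting step collapses, and the proof is incomplete exactly where the theorem's difficulty lies.
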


\begin{thm}\cite[Theorem 5.2]{behrstock_divergence_2012}\label{prop:RAAG product contracting}
    If $\RAAG$ is not a direct product, then $\RAAG$ contains a rank-1 element.
\end{thm}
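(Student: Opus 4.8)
The plan is to exhibit one concrete infinite-order element of $\RAAG$ and verify that it acts as a rank-1 isometry on the $\mathrm{CAT}(0)$ cube complex $\tilde S$ obtained as the universal cover of the Salvetti complex, on which $\RAAG$ acts geometrically. The first step is to reinterpret the hypothesis: $\RAAG$ splits as a nontrivial direct product $A_{\Gamma_1}\times A_{\Gamma_2}$ precisely when $\Gamma$ decomposes as a join $\Gamma_1 * \Gamma_2$, equivalently when the complement graph $\Gamma^c$ is disconnected. Hence ``$\RAAG$ is not a direct product'' means exactly that $\Gamma$ is not a join, i.e.\ no nonempty set of vertices is adjacent to all remaining vertices.

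For the candidate element I would take a cyclically reduced word $g = s_1 s_2 \cdots s_r$ using each vertex of $V(\Gamma) = \{s_1,\dots,s_r\}$ exactly once. Since no letter repeats, this word admits no free or cyclic reduction, so it is cyclically reduced with $\mathrm{supp}(g) = V(\Gamma)$. The element $g$ has infinite order and a combinatorial axis $\gamma \subseteq \tilde S$, a bi-infinite geodesic on which $g$ acts by translation, crossing in each period exactly one hyperplane dual to each generator $s_i$. I would then invoke the standard $\mathrm{CAT}(0)$ dichotomy for axial isometries \cite{bridson_metric_1999}: either $g$ is rank-1, or every axis of $g$, in particular $\gamma$, bounds a flat half-plane. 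So it suffices to rule out the latter.

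Assume for contradiction that $\gamma$ bounds a flat half-plane $H \cong [0,\infty)\times\R$, which we may take to be $\langle g\rangle$-invariant by cocompactness. Using that flats in $\mathrm{CAT}(0)$ cube complexes are combinatorial and carried by product subcomplexes \cite{schwer_lecture_2019}, I would analyze the hyperplanes meeting $H$: those dual to the letters $s_i$ cross $\gamma$, while the ``perpendicular'' hyperplanes sweeping across $H$ must cross every hyperplane dual to some $s_i$. Since two hyperplanes of the Salvetti complex cross if and only if their dual generators commute, the perpendicular direction forces a nonempty set of vertices adjacent to all remaining vertices of $\Gamma$, i.e.\ a join factor. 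This contradicts the assumption that $\Gamma$ is not a join, so $\gamma$ bounds no half-plane and $g$ is rank-1, as required. Combined with \cref{thm:rank 1 cat0 contract}, this $g$ is in fact contracting.

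The main obstacle is the third paragraph: converting the metric hypothesis ``$\gamma$ bounds a flat half-plane'' into the combinatorial conclusion ``$\Gamma$ is a join.'' This needs the hyperplane and disc-diagram machinery of Sageev's cube complexes to confirm that the half-plane is carried by a product region, together with careful use of $\langle g\rangle$-periodicity to ensure the commuting generators persist across a full period of $\gamma$ rather than only locally. Everything else, namely the reformulation of the hypothesis, the choice of a full-support element, and the appeal to the rank-1 dichotomy, is routine.
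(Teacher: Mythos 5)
First, note that the paper itself offers no proof of this statement: it is imported verbatim from \cite{behrstock_divergence_2012}, so your sketch has to stand on its own. Its skeleton --- the dictionary between direct-product decompositions of $\RAAG$ and joins of $\Gamma$, the full-support cyclically reduced element $g = s_1 s_2 \cdots s_r$, and the reduction to showing no axis bounds a flat half-plane --- is exactly the right one, and this element does work once one has Behrstock--Charney's actual theorem (a cyclically reduced element is rank one if and only if its support is not contained in a join of $\Gamma$). The genuine gap is in your third paragraph, and it is not merely deferred technicality: the mechanism by which you extract the join is the wrong one. You derive the join from ``perpendicular'' hyperplanes whose trace on $H$ is parallel to $\gamma$. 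But for a full-support element such hyperplanes cannot exist at all: a hyperplane crossing $\gamma$ meets $H$ in a ray emanating from its crossing point (a bounded trace could not separate $H$), every such ray crosses every line in $H$ parallel to $\gamma$, and since each period of $\gamma$ crosses a hyperplane dual to \emph{every} generator, a parallel hyperplane --- being itself dual to some $s_k$ --- would have to cross a hyperplane dual to $s_k$. That is impossible, because crossing hyperplanes pass through a common square and hence are dual to \emph{distinct} commuting generators (this also corrects your ``if and only if'': commuting generators do not force crossing). So the family on which your contradiction rests is provably empty, and your argument concludes nothing precisely in the case that must be excluded.

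The configuration that actually has to be ruled out is the ``skew'' half-plane all of whose hyperplanes cross $\gamma$: this is what really occurs when $\Gamma$ \emph{is} a join, e.g.\ for $ab \in \mathbb{Z}^2 = \langle a,b \mid [a,b] \rangle$, where the diagonal axis bounds half-planes met only by hyperplanes crossing the axis. There the join must be extracted from the crossing hyperplanes themselves: their ray-traces on $H$ fall into parallelism classes; a single parallel family cannot fill a two-dimensional flat, so there are at least two classes; using $\langle g \rangle$-periodicity, rays in distinct classes admit crossing representatives, hence the corresponding generators commute, and the resulting partition of $\mathrm{supp}(g) = V(\Gamma)$ exhibits $\Gamma$ as a join. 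This is the substance of the proof in \cite{behrstock_divergence_2012}. Note also that your claim that $H$ ``may be taken $\langle g\rangle$-invariant by cocompactness'' is unsupported --- translating $H$ by powers of $g$ and passing to a limit yields \emph{some} half-plane bounded by $\gamma$, not obviously an invariant one --- and some form of this periodicity is exactly what is needed to assign a single ray direction to each generator. In short: right element, right reduction, but the decisive combinatorial step fails as written.
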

In particular, if $\RAAG$ is not a direct product, then $\RAAG$ contains a contracting element, with respect to the action on the Salvetti complex. This implies that the conjugacy growth series is transcendental for all irreducible RAAGs by \cref{thm:contracting conj growth}. When we consider the corresponding extension $A_{\phi}$, as in \cref{semidirect1}, we find that this group also contains a contracting element, which again by \cref{thm:contracting conj growth} implies that the conjugacy growth of $A_{\phi}$ is transcendental.
\comm{
If $X$ is a proper $\mathrm{CAT}(0)$ space, then $g \in \mathrm{Isom}(X)$ is a contracting element, with respect to the $\mathrm{CAT}(0)$ metric, if and only if $g$ is a rank-1 element. }

\comm{
To use \cref{thm:contracting conj growth}, we require the following result.

\begin{prop}
    Let $A_{\phi}$ be a virtual RAAG as in \cref{semidirect1}. Then there exists a $(1,C)$ quasi-isometry between $A_{\phi}$ and the corresponding finite index subgroup $\RAAG$.
\end{prop}

\begin{proof}
    Consider the inclusion map $f \colon \RAAG \rightarrow A_{\phi}$. Then we note that for all $u,v \in \RAAG$, $0 \leq k \leq m-1$, 
    \[ d_{\RAAG}(u,v) = d_{A_{\phi}}(ut^{k}, vt^{k}).
    \]
    Therefore for $0 \leq k,l \leq m-1$,
    \[ d_{A_{\phi}}(ut^{k}, vt^{l}) \geq d_{A_{\phi}}(ut^{k}, vt^{k}) = d_{\RAAG}(u,v).
    \]
    For the upper bound, recall $t^{-1}s_{i}t = \phi(s_{i})$ for all $s_{i} \in X$, and so
    \[ d_{A_{\phi}}(ut^{k}, vt^{l}) \leq d_{A_{\phi}}(ut^{k}, \phi^{-(k-l)}(u)t^{l}) + d_{A_{\phi}}(\phi^{-(k-l)}(u)t^{l}
    \]
\end{proof}
}
\begin{thm}\label{cor:extension transc contract}
    Let $A_{\phi}$ be a virtual RAAG as in \cref{semidirect1}, such that $\RAAG$ is not a direct product or cyclic, and $\phi \in \mathrm{Aut}(\RAAG)$ is length-preserving. Then the conjugacy growth series of $A_{\phi}$ is transcendental, with respect to the word metric.
\end{thm}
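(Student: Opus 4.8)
The plan is to produce a contracting element in $A_{\phi}$ and then apply \cref{thm:contracting conj growth}. Before doing so I would check the non-elementariness hypothesis: since $\RAAG$ is torsion-free and not cyclic, it is not virtually cyclic, and as $\RAAG$ has finite index in $A_{\phi}$, the overgroup $A_{\phi}$ is not virtually cyclic either. For the contracting element, I would start inside the RAAG: because $\RAAG$ is not a direct product, \cref{prop:RAAG product contracting} furnishes a rank-1 element $g \in \RAAG$ for the action of $\RAAG$ on its Salvetti complex $S$.

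The next step is to promote $g$ to a contracting element of the extension. In the proof of \cref{thm: CAT(0) extension RAAG}, the geometric action of $\RAAG$ on the $\mathrm{CAT}(0)$ cube complex $S$ was extended to a faithful geometric action of $A_{\phi}$ on the \emph{same} complex $S$, with every $s_i \in X$ acting by the identical left-multiplication isometry $\Phi_{s_i}$. Hence $g$, regarded as an element of $A_{\phi}$, acts on $S$ by exactly the isometry it induced as an element of $\RAAG$; its axis and the property of not bounding a half-plane are therefore unchanged, so $g$ is a rank-1 element of the $\mathrm{CAT}(0)$ group $A_{\phi}$. By \cref{thm:rank 1 cat0 contract}, $g$ is then a contracting element for the action $A_{\phi} \curvearrowright S$.

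It remains to match the precise hypothesis of \cref{thm:contracting conj growth}, which concerns the action on the Cayley graph rather than on $S$. Since $A_{\phi}$ acts geometrically both on $S$ and on its Cayley graph $\mathrm{Cay}(A_{\phi}, X \cup \{t\})$, the Milnor--\v{S}varc Lemma gives an $A_{\phi}$-equivariant quasi-isometry between the two, and I would invoke that the contracting property of a group element is preserved under such an equivariant quasi-isometry; this gives that $g$ is contracting with respect to the Cayley-graph action, and \cref{thm:contracting conj growth} then yields the transcendence of the conjugacy growth series. I expect this last transfer to be the main obstacle, since strong contraction is in general sensitive to the choice of metric. The cleanest way to discharge it is probably to route through the finite-index subgroup $\RAAG$, whose Cayley graph $\mathrm{Cay}(\RAAG, X)$ is exactly the $1$-skeleton $S^{1}$, establish contraction of $g$ there, and then argue that contracting elements are inherited between a group and its finite-index overgroups.
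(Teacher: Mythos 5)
Your proposal is correct and takes essentially the same route as the paper: a rank-1 (hence contracting) element of $\RAAG$ from \cref{prop:RAAG product contracting} and \cref{thm:rank 1 cat0 contract}, extension of the action of $\RAAG$ to $A_{\phi}$ on the same Salvetti complex via \cref{thm: CAT(0) extension RAAG}, and a transfer of the contraction property to the word metric on $\mathrm{Cay}\bigl(A_{\phi}, \widehat{X}\bigr)$ before invoking \cref{thm:contracting conj growth} --- the paper discharges the transfer step exactly by your suggested fallback, namely projecting onto the $1$-skeleton $S^{1} = \mathrm{Cay}(\RAAG, X)$. Your explicit verification of non-elementariness (torsion-free and not cyclic implies not virtually cyclic, which passes to the finite-index overgroup) is a small addition that the paper leaves implicit in the hypothesis that $\RAAG$ is neither cyclic nor a direct product.
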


\begin{proof}
    By \cref{prop:RAAG product contracting} and \cref{thm:rank 1 cat0 contract}, $\RAAG$ acts on the Salvetti complex $S$ with a contracting element (when $\RAAG$ is not a direct product), with respect to the word metric. When we consider the finite extension $A_{\phi}$, this action is extended to the same space (see \cref{cor:CP solvable RAAGs len p CAT0}). Moreover, for any geodesic $g \in \mathrm{Cay}\left(A_{\phi}, \widehat{X}\right)$, where $\widehat{X} = X \cup \{t\}^{\pm}$, we can project $g$ onto $S^{1}$, which then gives us a contracting element with respect to the action on $\mathrm{Cay}\left(A_{\phi}, \widehat{X}\right)$. Therefore the conjugacy growth series of $A_{\phi}$ is transcendental, using \cref{thm:contracting conj growth}.
\end{proof}

%One can show that there exists a $(1,C)$ quasi-isometry between $\RAAG$ and $A_{\phi}$. Since these types of quasi-isometries preserve contracting elements, then we know that $A_{\phi}$ acts on its Cayley graph with a contracting element.
\comm{
However, it remains unclear whether \cref{thm:contracting conj growth} can be used in this context - with further investigation, it is unclear whether the action which contains a contracting element must be with respect to the Cayley graph of the group, rather than an arbitrary space, in order for the group to have transcendental conjugacy growth.}
\comm{
\begin{rmk}
    \cref{cor:extension transc contract} implies that the language $\mathsf{ConjSL}\left(A_{\phi}, \widehat{X}\right)$ is not unambiguous context-free. This is a weaker version of \myRed{CITE LANGUAGES PAPER}.
\end{rmk}}
Similar results hold for RACGs, using \cref{thm:RACGs CAT}.
\comm{
%Can't figure out/find proof or citation
\begin{prop}
    A RACG $W_{\Gamma}$ is virtually a direct product if and only if 
    \begin{equation}\label{eqn:RACG direct product}
        W_{\Gamma} \cong W_{1} \times \left(W_{2} \times \Z^{|K|}_{2}\right),
    \end{equation}
    for some RACGs $W_{1}, W_{2}$, where $K \subseteq \Gamma$ is a (possibly empty) clique.
\end{prop}}

\begin{prop}\cite[Proposition 2.11]{behrstock_thickness_2017}\cite[Theorem 2.14]{charney_contracting_2015}
    A RACG $W_{\Gamma}$ contains a contracting element if and only if $W_{\Gamma}$ is not virtually a direct product. 
\end{prop}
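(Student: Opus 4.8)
The plan is to pass to the geometry of the Davis complex $\Sigma_{\Gamma}$, the canonical $\mathrm{CAT}(0)$ cube complex on which $W_{\Gamma}$ acts faithfully and geometrically (the RACG analogue of the Salvetti complex used earlier), and to prove the equivalent statement that $W_{\Gamma}$ contains a \emph{rank-1} isometry for this action if and only if it is not virtually a direct product. Since possessing a contracting (equivalently Morse) element is a quasi-isometry invariant, and $W_{\Gamma}$ is quasi-isometric to $\Sigma_{\Gamma}$ by the Milnor--\v{S}varc Lemma, this equivalent statement yields the proposition: a rank-1 element is contracting by \cref{thm:rank 1 cat0 contract}, and conversely a contracting element of $\mathrm{Cay}(W_{\Gamma}, V(\Gamma))$ transports to a rank-1 isometry of $\Sigma_{\Gamma}$, since Morse and rank-1 coincide for semisimple $\mathrm{CAT}(0)$ isometries.

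For the converse direction I would argue the contrapositive: if $W_{\Gamma}$ is virtually a direct product, then it has no contracting element. Suppose a finite index subgroup splits as $H = A \times B$ with $A$ and $B$ both infinite. Each infinite RACG contains an element of infinite order, so choosing $a \in A$ and $b \in B$ of infinite order produces a quasi-isometrically embedded copy of $\Z^{2} = \langle a \rangle \times \langle b \rangle$, that is, a flat. Consequently the axis of every infinite order element of $H$ bounds a half-flat, so $H$ contains no rank-1 isometry; equivalently, a geodesic ray lying in such a flat parallel to and at positive distance from the orbit projects onto an unbounded subset of that orbit, violating the contracting condition. As having a contracting element passes between commensurable groups, $W_{\Gamma}$ itself has none.

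For the forward direction, the key input is the Rank Rigidity Theorem of Caprace--Sageev for groups acting essentially, with no fixed point at infinity, on finite-dimensional $\mathrm{CAT}(0)$ cube complexes. Applied to $W_{\Gamma} \curvearrowright \Sigma_{\Gamma}$, it gives a dichotomy: either $W_{\Gamma}$ contains a rank-1 isometry, or $\Sigma_{\Gamma}$ splits $W_{\Gamma}$-equivariantly (after passing to the essential core and a finite index subgroup) as a nontrivial product $\Sigma_{1} \times \Sigma_{2}$ of unbounded subcomplexes, whence a finite index subgroup of $W_{\Gamma}$ is a direct product of two infinite groups. Taking the contrapositive, if $W_{\Gamma}$ is not virtually a direct product then the first alternative holds, producing a rank-1 and hence contracting element. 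Before invoking rank rigidity, I would first strip off the central $\Z_{2}$ factors coming from vertices adjacent to every other vertex of $\Gamma$ (these form a clique $K$, giving $W_{\Gamma} \cong \Z_{2}^{|K|} \times W_{\Gamma'}$), since they contribute only bounded factors to $\Sigma_{\Gamma}$ and are irrelevant to whether $W_{\Gamma}$ is virtually a product of \emph{infinite} groups.

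The main obstacle is this forward direction: one must verify essentiality and the absence of a boundary fixed point for the Davis complex action, and, crucially, ensure that the product furnished by rank rigidity has \emph{both} factors unbounded, so that the resulting virtual splitting is a genuine direct product of infinite groups rather than an artefact of finite group factors. This finite-versus-infinite bookkeeping, captured by the normal form $W_{\Gamma} \cong W_{1} \times (W_{2} \times \Z_{2}^{|K|})$, is precisely where the combinatorics of $\Gamma$ — join decompositions modulo the universal clique $K$ — must be reconciled with the geometric product decomposition.
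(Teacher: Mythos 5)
Your architecture is, in essence, the proof that sits behind the two citations in the statement (the paper itself offers no argument): Charney--Sultan supply the equivalence contracting $\Leftrightarrow$ Morse $\Leftrightarrow$ rank-1 for $\mathrm{CAT}(0)$ spaces, and the Behrstock--Hagen--Sisto dichotomy is obtained by applying rank rigidity to the Davis complex. Your converse direction is also essentially sound once tidied: the claim that \emph{every} axis of every infinite-order element bounds a half-flat is stronger than what your argument shows and should be dropped; what you actually need, and what your parenthetical projection argument (applied to translates of the cyclic orbit, or equivalently the linear-divergence property of products of infinite groups) does give, is that no cyclic orbit in a group commensurable to a product of two infinite groups can be $C$-contracting for any $C$. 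Note also that $A$ and $B$ are merely subgroups of a Coxeter group, not RACGs, but infinite-order elements still exist in them because Coxeter groups are virtually torsion-free.

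The genuine gap is at the crux of the forward direction: the step from ``the essential core of the Davis complex splits as a nontrivial product $\Sigma_{1} \times \Sigma_{2}$ of unbounded factors'' to ``a finite-index subgroup of $W_{\Gamma}$ is a direct product of two infinite groups.'' This implication is false for general groups acting geometrically on $\mathrm{CAT}(0)$ cube complexes: Burger--Mozes lattices act geometrically on products of two trees preserving the factors, contain no rank-1 isometry (every axis bounds a half-flat inside the product), and yet are simple, hence in particular not virtually a direct product. Passing to a finite-index subgroup that preserves the two factors only yields an embedding into $\mathrm{Isom}(\Sigma_{1}) \times \mathrm{Isom}(\Sigma_{2})$; it does not split the group, because the projections to the factors need not be proper or discrete. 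What rescues the argument is specific to reflection groups: each conjugate of a standard generator of $W_{\Gamma}$ fixes its wall pointwise, and in a product every hyperplane has the form $H \times \Sigma_{2}$ or $\Sigma_{1} \times H$, so every reflection acts trivially on one of the two factors; the reflections therefore fall into two elementwise-commuting families, and the subgroups they generate commute and together generate a finite-index subgroup of $W_{\Gamma}$, which is what actually produces the virtual splitting into two infinite factors. Alternatively, one can bypass cube-complex rank rigidity altogether and quote the Coxeter-theoretic results on which the cited proofs ultimately rest: Caprace--Fujiwara (every irreducible, non-spherical, non-affine Coxeter group contains a rank-1 element of its Davis complex) combined with Krammer--Paris (such a group is not virtually a product of two infinite groups), applied to the decomposition of $(W_{\Gamma}, V(\Gamma))$ into irreducible visual factors, with the affine factors (virtually $\Z^{n}$) and the universal clique handled exactly by the bookkeeping you describe. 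Without one of these Coxeter-specific inputs, the proof has a hole precisely where the work lies.
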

%Proof idea:
%V. direct product iff linear divergence
%iff contracting element
\begin{thm}\label{thm:RACG conj growth}
    Let $W_{\phi} = W_{\Gamma} \rtimes_{\phi} \Z/m\Z$ be a virtual RACG as in \cref{semidirect1}, such that $W_{\Gamma}$ is not a virtual direct product. Then the conjugacy growth series of $W_{\Gamma}$ is transcendental, with respect to the word metric.
\end{thm}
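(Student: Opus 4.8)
The plan is to run the argument of \cref{cor:extension transc contract} essentially verbatim, with the Davis complex of $W_\Gamma$ playing the role of the Salvetti complex and \cref{thm:RACGs CAT} playing the role of \cref{thm: CAT(0) extension RAAG}. I read the conclusion as asserting transcendentality for the extension $W_\phi$ itself, matching the parallel RAAG statement and the combined \cref{thm:combine RAAG RACG growth}, so that is the version I would prove.

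First I would use the hypothesis that $W_\Gamma$ is not virtually a direct product together with the cited proposition (\cite[Proposition 2.11]{behrstock_thickness_2017}, \cite[Theorem 2.14]{charney_contracting_2015}) to produce a contracting element $h \in W_\Gamma$ for the geometric action of $W_\Gamma$ on its associated $\mathrm{CAT}(0)$ complex $\Sigma$. Next, by \cref{thm:RACGs CAT} the extension $W_\phi = W_\Gamma \rtimes_\phi \Z/m\Z$ is a $\mathrm{CAT}(0)$ group; the essential point, exactly as in the proof of \cref{thm: CAT(0) extension RAAG}, is that its geometric action lives on the same complex $\Sigma$ and restricts to the original $W_\Gamma$-action, with the extra generator $t$ acting as the isometry induced by $\phi$.

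Since $h$ lies in the finite-index subgroup $W_\Gamma \leq W_\phi$ and both groups act by isometries on the single space $\Sigma$, the orbit $\langle h \rangle \cdot o$, and hence its $C$-contracting property, is unchanged, and $n \mapsto h^n \cdot o$ remains a quasi-isometric embedding (the inclusion $W_\Gamma \hookrightarrow W_\phi$ is a quasi-isometry by Milnor--\v{S}varc). Thus $h$ is still contracting for the $W_\phi$-action on $\Sigma$. To invoke \cref{thm:contracting conj growth}, which requires a contracting element for the action on $\mathrm{Cay}(W_\phi, \widehat{X})$ with $\widehat{X} = X \cup \{t\}^{\pm}$, I would transfer the contracting property from $\Sigma$ to the Cayley graph by projecting geodesics of $\mathrm{Cay}(W_\phi, \widehat{X})$ onto the $1$-skeleton of $\Sigma$, precisely as in \cref{cor:extension transc contract}. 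Applying \cref{thm:contracting conj growth} then yields that the conjugacy growth series of $W_\phi$ is transcendental.

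The main obstacle is this final transfer step: contractingness is a metric notion that is not automatically preserved in passing from the $\mathrm{CAT}(0)$ metric on $\Sigma$ to the word metric on $\mathrm{Cay}(W_\phi, \widehat{X})$, since the two are only quasi-isometric. One must verify that the projection to the $1$-skeleton is coarsely Lipschitz and coarsely surjective, so that the $C$-contracting orbit maps to a contracting set in the Cayley graph, or else appeal to a quasi-isometry invariance statement for contracting elements. This is the same delicate point underlying the RAAG proof; carrying it out carefully, rather than the routine bookkeeping with $\phi$, is the crux of the argument.
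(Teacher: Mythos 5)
Your proposal is exactly the proof the paper intends: the paper gives no separate argument for this theorem, saying only ``Similar results hold for RACGs, using \cref{thm:RACGs CAT}'', i.e.\ one reruns \cref{cor:extension transc contract} with the Davis complex in place of the Salvetti complex, the cited Behrstock--Charney--Sultan proposition supplying the contracting element, and \cref{thm:RACGs CAT} supplying the $\mathrm{CAT}(0)$ extension --- which is precisely what you do, including the projection of $\mathrm{Cay}(W_\phi,\widehat{X})$-geodesics to the $1$-skeleton before invoking \cref{thm:contracting conj growth}. Your reading of the conclusion as concerning $W_\phi$ (the ``$W_\Gamma$'' in the statement being a typo) matches \cref{thm:combine RAAG RACG growth}, and the transfer-of-contraction subtlety you flag is equally present (and equally unaddressed) in the paper's own proof of the RAAG case.
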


\subsection{RAAGs with finite Outer automorphism group}
The following result has both positive and negative consequences for solving the conjugacy problem in virtual RAAGs.
\begin{prop}(\cite[Corollary 1.8]{huang_groups_2018})
Let $A_{\Gamma}$ be a RAAG such that $\mathrm{Out}(A_{\Gamma})$ is finite. If $G$ is any finitely generated group quasi-isometric to $A_{\Gamma}$, then $G$ is a $\mathrm{CAT}(0)$ group. 
\end{prop}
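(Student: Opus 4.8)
The plan is to realize $G$ as a group acting geometrically on the CAT(0) cube complex underlying $A_\Gamma$, by showing that self-quasi-isometries of that complex are rigid precisely when $\mathrm{Out}(A_\Gamma)$ is finite. First I would fix the geometric model: let $X$ be the universal cover of the Salvetti complex of $A_\Gamma$, a CAT(0) cube complex on which $A_\Gamma$ acts geometrically (as recalled in the proof of \cref{thm: CAT(0) extension RAAG}), with $1$-skeleton $\mathrm{Cay}(A_\Gamma, X)$. Since $G$ is quasi-isometric to $A_\Gamma$, and $A_\Gamma$ is quasi-isometric to $X$ by the Milnor--\v{S}varc Lemma, a quasi-isometry $f \colon G \to X$ conjugates the left-translation action of $G$ on itself into an action $\rho \colon G \to QI(X)$ by uniform quasi-isometries of $X$. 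The goal is to straighten $\rho$ into a genuine geometric action by isometries on a CAT(0) space; since $X$ is CAT(0), this makes $G$ a CAT(0) group in the sense of \cref{def:CAT(0) group}.

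Second, the heart of the argument is quasi-isometric rigidity of the pattern of standard flats in $X$. The top-dimensional standard flats are the images of cosets of the abelian special subgroups generated by cliques of $\Gamma$; I would show that any self-quasi-isometry of $X$ coarsely permutes the maximal product subcomplexes of $X$ and, more finely, preserves the combinatorial incidence pattern of standard flats up to bounded Hausdorff distance. The coarse geometry developed earlier is exactly the right tool: irreducible standard subgroups contain rank-$1$, hence contracting, elements (by \cref{prop:RAAG product contracting} and \cref{thm:rank 1 cat0 contract}), so non-product directions are Morse while product regions are flat, and a quasi-isometry cannot interchange the two. This is the step where finiteness of $\mathrm{Out}(A_\Gamma)$ is essential: by the Servatius--Laurence classification, $\mathrm{Out}(A_\Gamma)$ is finite precisely when $\Gamma$ admits no transvections (no domination $\mathrm{Lk}(x) \subseteq \mathrm{St}(y)$ between distinct vertices) and no separating stars (each $\Gamma \setminus \mathrm{St}(x)$ is connected), and these two combinatorial conditions are exactly what forbid shearing flats along one another and independently rotating complementary pieces. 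Their absence rigidifies the flat pattern under quasi-isometry.

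Third, I would upgrade pattern rigidity to the statement that every self-quasi-isometry of $X$ lies at uniformly bounded distance from a cubical automorphism of $X$. Composing with $\rho$ then yields a homomorphism $\bar\rho \colon G \to \mathrm{Aut}(X)$, using that two cubical automorphisms at bounded distance must coincide (so the straightening is canonical and multiplicative). Because $\rho$ came from a quasi-isometry, the resulting action $\bar\rho$ is proper and cobounded, and cubical automorphisms act by isometries on the CAT(0) metric of $X$; hence $G$ acts properly and coboundedly by isometries on the CAT(0) space $X$. The kernel of $\bar\rho$ is finite by properness, and standard arguments promote this to a faithful geometric action, so $G$ is a CAT(0) group.

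The main obstacle is the rigidity step of the second paragraph together with the straightening of the third: proving that quasi-isometries respect the full incidence pattern of standard flats, not merely the top-dimensional ones, and that this combinatorial pattern is rich enough to pin each quasi-isometry to a unique nearby cubical automorphism. Controlling how flats of different dimensions branch and intersect, and showing the branching is quasi-isometrically detectable, is the technical core, and it is exactly where the no-transvection and no-separating-star hypotheses do their work; without them the pattern admits continuous families of shears, the correspondence between $\mathrm{Aut}(X)$ and $QI(X)$ loses its discrete fibres, and the straightening breaks down.
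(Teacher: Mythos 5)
First, a point of calibration: the paper contains no proof of this proposition at all --- it is imported verbatim from Huang--Kleiner \cite{huang_groups_2018}, so your attempt has to be measured against that paper's argument, which constitutes a substantial research article rather than a short lemma. Your outline does capture the general philosophy of that work (a quasi-isometry $G \to A_\Gamma$ induces a quasi-action on the universal cover $X$ of the Salvetti complex; quasi-isometries coarsely preserve the pattern of standard flats and maximal product subcomplexes when $\mathrm{Out}(A_\Gamma)$ is finite; one then straightens the quasi-action). But the two load-bearing steps are asserted rather than proved, and the specific straightening you propose in your third step is false at the stated level of generality. Under finiteness of $\mathrm{Out}(A_\Gamma)$ alone it is \emph{not} true that every self-quasi-isometry of $X$ lies at uniformly bounded distance from a cubical automorphism of $X$: already $A_\Gamma \cong \Z$ has finite outer automorphism group, yet the quasi-isometry group of $\R$ is vastly larger than its isometry group, and your auxiliary claim that two cubical automorphisms at bounded distance coincide also fails there (distinct translations are at bounded distance). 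In higher dimensions, the strong statement ``$QI(X)$ reduces to $\mathrm{Aut}(X)$'' is exactly what Huang obtains in his separate commensurability work only under hypotheses strictly stronger than finite $\mathrm{Out}$ (star-rigidity type conditions on $\Gamma$). What Huang--Kleiner actually do is essentially different at this juncture: after establishing flat-pattern preservation, they pass to the induced action on an auxiliary combinatorial object (the extension complex/flat space) and then \emph{cubulate}, constructing a new CAT(0) cube complex --- in general not $X$ itself --- on which $G$ acts geometrically. So your plan to pin $G$ to a geometric action on $X$ would break down, even though the conclusion that $G$ is CAT(0) survives via the substitute complex.

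Two further concrete issues. The pattern-rigidity step (your second paragraph) does not follow from the contracting-element results you cite: \cref{prop:RAAG product contracting} and \cref{thm:rank 1 cat0 contract} give the existence of Morse directions, but flat preservation under quasi-isometries rests on the quasiflat machinery of Bestvina--Kleiner--Sageev and Huang (top-dimensional quasiflats in CAT(0) cube complexes being at finite Hausdorff distance from finite unions of standard orthants), which is a genuinely different and much heavier input; the heuristic that ``no transvections and no separating stars forbid shearing'' is the right intuition but is not an argument. Finally, even granting a straightened proper cobounded action by isometries, your last step waves through the passage from a finite-kernel action to the \emph{faithful} geometric action demanded by \cref{def:CAT(0) group}; this requires an actual argument (e.g., replacing the space or quotienting appropriately), not just the remark that the kernel is finite. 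In summary: correct program at the level of slogans, but both pivotal steps are precisely the content of \cite{huang_groups_2018}, and the specific rigidity claim your version hinges on is provably too strong under the stated hypothesis.
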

This implies that for any virtual RAAG $G$, such that $A_{\Gamma} \leq G$ is of finite index and $\RAAG$ has finite $\mathrm{Out}(A_{\Gamma})$, then $G$ has solvable conjugacy problem. For large graphs, Charney and Farber gave a probabilistic argument that $\mathrm{Out}(\RAAG)$ is almost always finite, as the number of vertices in the defining graph $\Gamma$ tends to infinity. 
\begin{thm}\cite[Theorem 6.1]{charney_random_2012}
Let $\Gamma \in G(n,p)$ be a random graph with probability parameter $p$ independent of $n$ such that
\[ 1 - \frac{1}{\sqrt{2}} < p < 1.
\]
Then $A_{\Gamma}$ has finite $\mathrm{Out}(A_{\Gamma})$ asymptotically almost surely.
\end{thm}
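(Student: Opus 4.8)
The plan is to reduce finiteness of $\mathrm{Out}(\RAAG)$ to two combinatorial conditions on $\Gamma$ and then verify that each holds asymptotically almost surely (a.a.s.) by a first-moment argument. By the Servatius--Laurence classification \cite{servatius_automorphisms_1989, laurence_generating_1995} recalled above, $\mathrm{Aut}(\RAAG)$ is generated by inversions, graph automorphisms, partial conjugations and dominating transvections. The inversions and graph automorphisms generate a \emph{finite} subgroup of $\mathrm{Out}(\RAAG)$, so $\mathrm{Out}(\RAAG)$ is infinite if and only if $\Gamma$ admits either a dominating transvection or a non-inner partial conjugation. A transvection exists precisely when there are distinct vertices $v,w$ with $\mathrm{Lk}(v) \subseteq \mathrm{St}(w)$ (a \emph{domination}), and such a transvection has infinite order in $\mathrm{Out}(\RAAG)$; a partial conjugation based at $x$ conjugates a union of connected components of $\Gamma \setminus \mathrm{St}(x)$ by $x$, and since conjugating \emph{all} of $\Gamma \setminus \mathrm{St}(x)$ by $x$ is already inner, such a conjugation is non-inner exactly when $\Gamma \setminus \mathrm{St}(x)$ is disconnected (a \emph{separating star}). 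Hence it suffices to show that a random $\Gamma \in G(n,p)$ a.a.s. has no domination and no separating star.

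For the domination condition I would bound the expected number of ordered pairs $(v,w)$ with $\mathrm{Lk}(v) \subseteq \mathrm{St}(w)$. Writing $q = 1-p$, a third vertex $u \neq v,w$ violates the inclusion exactly when $u \sim v$ and $u \not\sim w$, an event of probability $pq$; as the relevant edges are independent, $\mathbb{P}\bigl(\mathrm{Lk}(v) \subseteq \mathrm{St}(w)\bigr) \le (1-pq)^{n-2}$. Summing over the $n(n-1)$ ordered pairs gives an expected count at most $n(n-1)(1-pq)^{n-2}$, which tends to $0$ since $1 - pq < 1$ for $p \in (0,1)$. By Markov's inequality, a.a.s. $\Gamma$ has no domination, hence no transvection.

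For the separating-star condition I would show that $\Gamma \setminus \mathrm{St}(v)$ is connected for every vertex $v$ at once. Fixing $v$ and conditioning on its non-neighbour set $N_v$, the edges internal to $N_v$ are independent $\mathrm{Bernoulli}(p)$ variables, so the induced subgraph is distributed as $G(|N_v|, p)$ with $|N_v|$ concentrated around $q(n-1)$. For constant $p$ the dominant obstruction to connectivity of $G(m,p)$ is an isolated vertex, giving $\mathbb{P}\bigl(G(m,p)\ \text{disconnected}\bigr) = O(m\,q^{\,m-1})$; with $m \approx qn$ this is exponentially small in $n$. A union bound over the $n$ choices of $v$, together with the exponentially unlikely event that some $|N_v|$ is atypically small, still tends to $0$, so a.a.s. no star separates.

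A final union bound then shows that a.a.s. $\Gamma$ has neither a domination nor a separating star, whence $\mathrm{Out}(\RAAG)$ is finite. I expect the main obstacle to be the uniformity in the separating-star step: one must control the connectivity of all $n$ link-complements simultaneously, which is most delicate in the large-$p$ regime where each $\Gamma \setminus \mathrm{St}(v)$ is small and dense and the $G(m,p)$ estimates must be handled carefully. It is worth noting that these first-moment estimates converge for \emph{every} constant $p \in (0,1)$, so the hypothesis $1 - \tfrac{1}{\sqrt 2} < p < 1$ leaves ample room; the precise constant in the cited statement arises from the more general graph-product bookkeeping of \cite{charney_random_2012}.
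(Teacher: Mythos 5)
The paper does not prove this statement---it is quoted verbatim from \cite[Theorem 6.1]{charney_random_2012}---so there is no internal proof to compare against; judged against the cited source, your proposal follows the same overall strategy (reduce finiteness of $\mathrm{Out}(A_\Gamma)$ to two graph conditions, then verify each asymptotically almost surely) and is correct. Your reduction is precisely the criterion the paper records immediately after this theorem as \cite[Proposition 2.4]{wade_note_2023}: $\mathrm{Out}(A_\Gamma)$ is finite if and only if no star $\mathrm{St}(u)$ separates and no vertex dominates another. You could invoke that proposition directly, which would also absorb the two facts you assert only in passing: that a dominating transvection has infinite order in $\mathrm{Out}(A_\Gamma)$ (clear in the abelianization, where it maps to an elementary matrix of infinite order in $\mathrm{GL}_r(\Z)$ while inner automorphisms die), and that a partial conjugation across a disconnected star complement is non-inner. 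Your first-moment estimates are sound: by edge-independence the domination probability for a fixed ordered pair $(v,w)$ is exactly $(1-pq)^{n-2}$, and for the separating-star condition, conditioning on the non-neighbour set $N_v$ (whose internal edges are independent of the edges at $v$) makes $\Gamma\setminus\mathrm{St}(v)$ an exact copy of $G(|N_v|,p)$, with $|N_v|$ linear in $n$ outside an exponentially unlikely Chernoff event, and with $\mathbb{P}\bigl(G(m,p)\ \text{disconnected}\bigr)\le \sum_{k=1}^{\lfloor m/2\rfloor}\binom{m}{k}(1-p)^{k(m-k)}=O\bigl(m(1-p)^{m-1}\bigr)$ for constant $p$; union bounds over the $n$ vertices finish it (the trivial cases $|N_v|\le 1$ are harmless). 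Note that your argument in fact yields the conclusion for \emph{every} constant $p\in(0,1)$, strictly more than the stated range $1-\tfrac{1}{\sqrt{2}}<p<1$: the restriction in \cite{charney_random_2012} is an artifact of the estimates used there, and the strengthening you obtain is consistent with (indeed subsumed by) the improvement of Day \cite{day_finiteness_2012} that the paper cites immediately after the theorem.
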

This bound was later improved by Day \cite{day_finiteness_2012}. On the other hand, when the number of vertices in the defining graph is small, it is more likely that $\mathrm{Out}(A_{\Gamma})$ will be infinite. This can be seen by the following criteria, which was formalised by Wade.
\begin{prop}\cite[Proposition 2.4]{wade_note_2023}
Let $\Gamma$ be a finite graph. Then $\mathrm{Out}(A_{\Gamma})$ is finite if and only if for each vertex $u \in V(\Gamma)$:
\begin{enumerate}
    \item[(i)] any two vertices in $\Gamma \setminus \mathrm{St}(u)$ are connected by a path in $\Gamma \setminus \mathrm{St}(u)$, and
    \item[(ii)] if $\mathrm{Lk}(u) \subset \mathrm{St}(v)$ for some vertex $v \in V(\Gamma)$, then $u = v$.
\end{enumerate}
\end{prop}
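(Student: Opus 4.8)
The plan is to work directly with the Servatius--Laurence generating set of $\mathrm{Aut}(\RAAG)$ recorded above (inversions, graph automorphisms, partial conjugations, dominating transvections) and to sort these generators according to whether they have finite or infinite order in $\mathrm{Out}(\RAAG)$. The first observation is that the inversions together with the graph automorphisms generate a \emph{finite} subgroup $F \leq \mathrm{Aut}(\RAAG)$: the inversions are commuting involutions generating $(\Z/2\Z)^{|V(\Gamma)|}$, the graph automorphisms form a copy of $\mathrm{Aut}(\Gamma) \leq \mathrm{Sym}(V(\Gamma))$, and conjugating an inversion $\iota_{x}$ by a graph automorphism $\sigma$ gives the inversion $\iota_{\sigma(x)}$, so $F \cong (\Z/2\Z)^{|V(\Gamma)|} \rtimes \mathrm{Aut}(\Gamma)$. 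Since $\mathrm{Inn}(\RAAG) \trianglelefteq \mathrm{Aut}(\RAAG)$, finiteness of $\mathrm{Out}(\RAAG)$ will follow once I show that, under (i) and (ii), every generator lies in $F \cdot \mathrm{Inn}(\RAAG)$; then $\mathrm{Aut}(\RAAG) = F\cdot\mathrm{Inn}(\RAAG)$, and by the second isomorphism theorem $\mathrm{Out}(\RAAG) \cong F/(F\cap\mathrm{Inn}(\RAAG))$ is finite.

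For the forward direction I would show that (ii) eliminates transvections and (i) makes every partial conjugation inner. A dominating transvection requires distinct vertices $u \neq v$ with $\mathrm{Lk}(u) \subseteq \mathrm{St}(v)$, which is exactly what (ii) forbids, so no nontrivial transvection exists. For partial conjugations, recall that the subgraph $Q$ of the definition is precisely $\Gamma \setminus \mathrm{St}(u)$, and that conjugation by the vertex $u$ fixes $\mathrm{St}(u)$ pointwise and conjugates \emph{all} of $\Gamma \setminus \mathrm{St}(u)$ by $u$; that is, the inner automorphism $\mathrm{conj}_{u}$ is the ``full'' partial conjugation based at $u$. Condition (i) says $\Gamma \setminus \mathrm{St}(u)$ is connected, so the only choices of the union-of-components $P$ are $\varnothing$ (the identity) and all of $\Gamma \setminus \mathrm{St}(u)$ (giving $\mathrm{conj}_{u}$); hence every partial conjugation is inner. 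Thus all generators lie in $F\cdot\mathrm{Inn}(\RAAG)$ and the reduction above closes this direction.

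For the converse I would produce an infinite-order element of $\mathrm{Out}(\RAAG)$ whenever (i) or (ii) fails. If (ii) fails there are $u \neq v$ with $\mathrm{Lk}(u) \subseteq \mathrm{St}(v)$, giving a transvection $\tau \colon u \mapsto uv$. Passing to the abelianization $H_{1}(\RAAG) \cong \Z^{|V(\Gamma)|}$ yields a homomorphism $\mathrm{Out}(\RAAG) \to \mathrm{GL}_{|V(\Gamma)|}(\Z)$, since inner automorphisms act trivially on $H_{1}$; the image of $\tau$ is the elementary transvection $e_{u} \mapsto e_{u}+e_{v}$, which has infinite order, so $\mathrm{Out}(\RAAG)$ is infinite. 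If instead (i) fails, then $\Gamma \setminus \mathrm{St}(u)$ is disconnected for some $u$, and choosing a proper nonempty union $P$ of its components gives a partial conjugation $\pi$ conjugating $P$ by $u$. Here the abelianization test is useless because $\pi$ acts trivially on $H_{1}$, so I must argue separately that $\pi$ has infinite order modulo $\mathrm{Inn}(\RAAG)$.

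The main obstacle is exactly this last step: showing that an essential partial conjugation $\pi$ (with $P$ a proper nonempty union of components) represents an element of infinite order in $\mathrm{Out}(\RAAG)$. The approach would be a centralizer/normal-form argument: if $\pi^{k} = \mathrm{conj}_{g}$ then $g$ must centralize every vertex of $V(\Gamma)\setminus P$ (on which $\pi^{k}$ is the identity), while $u^{-k}g$ must centralize every vertex of $P$ (on which $\pi^{k}$ conjugates by $u^{k}$). Since $P$ is separated from $\mathrm{St}(u)$ inside $\Gamma$ and the vertices of $P$ do not commute with $u$, using the explicit description of vertex centralizers $C(w) = \langle \mathrm{St}(w)\rangle$ in a RAAG one derives a contradiction for every $k \neq 0$. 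This commutation bookkeeping, rather than the transvection case, is where the real work lies; by contrast the finite-order generators and the transvection case are handled cleanly by the finiteness of $F$ and by the abelianization homomorphism respectively.
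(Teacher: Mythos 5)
The paper does not prove this proposition at all: it is quoted verbatim from Wade's note \cite{wade_note_2023}, so the only meaningful comparison is with the standard argument, which is exactly the route you take. Your proof is correct in structure and in all the steps you actually carry out: the Laurence--Servatius generators are sorted correctly, condition (ii) is precisely the non-existence of a dominated pair $u\neq v$ with $\mathrm{Lk}(u)\subseteq\mathrm{St}(v)$ and hence of any dominating transvection, condition (i) forces every partial conjugation based at $u$ to be trivial or equal to $\mathrm{conj}_u$ (which is indeed the full partial conjugation, since $\mathrm{conj}_u$ fixes $\mathrm{St}(u)$ pointwise), and since $\mathrm{Inn}(\RAAG)$ is normal, $F\cdot\mathrm{Inn}(\RAAG)$ is a subgroup containing all generators, giving $\mathrm{Out}(\RAAG)\cong F/(F\cap\mathrm{Inn}(\RAAG))$ finite. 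The abelianization argument for the failure of (ii) is also clean and complete: $\tau^{k}$ maps to $e_u\mapsto e_u+ke_v$ in $\mathrm{GL}_{|V(\Gamma)|}(\Z)$, which kills inner automorphisms, so $\tau$ has infinite order in $\mathrm{Out}(\RAAG)$.

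The one place you leave a sketch --- infinite order in $\mathrm{Out}(\RAAG)$ of a partial conjugation $\pi$ with $P$ a proper nonempty union of components of $Q=\Gamma\setminus\mathrm{St}(u)$ --- does close, and more easily than your phrase ``commutation bookkeeping'' suggests; the key point, which you should make explicit, is to use a vertex of $Q\setminus P$ and not merely vertices of $\mathrm{St}(u)$. Suppose $\pi^{k}=\mathrm{conj}_g$. Pick $p\in P$ and $q\in Q\setminus P$ (possible exactly because $P$ is proper and nonempty). From $gqg^{-1}=q$ and Servatius' Centralizer Theorem, $g\in\langle\mathrm{St}(q)\rangle$; since $q\notin\mathrm{St}(u)$ we have $u\notin\mathrm{St}(q)$, so the exponent sum of $u$ in $g$ is $0$. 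On the other hand $u^{-k}g\in\langle\mathrm{St}(p)\rangle$ with $u\notin\mathrm{St}(p)$, so that exponent sum equals $k$. Hence $k=0$, and $\pi$ is non-inner of infinite order in $\mathrm{Out}(\RAAG)$. With that two-line exponent-sum argument inserted, your proof is complete and, as far as the cited source goes, coincides with the known proof of Wade's Proposition 2.4 rather than offering a genuinely different route.
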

Using this criteria, one can show the following.
\begin{lemma}
Let $\Gamma$ be a finite graph on $\leq 5$ vertices. Then $\mathrm{Out}(A_{\Gamma})$ is finite if and only if $\Gamma = C_{5}$ is the cycle graph on 5 vertices.
\end{lemma}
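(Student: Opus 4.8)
The key tool is the preceding criterion of Wade \cite[Proposition 2.4]{wade_note_2023}, which characterises finiteness of $\mathrm{Out}(A_\Gamma)$ purely combinatorially through conditions (i) and (ii) on stars and links. The plan is to verify both conditions for $C_5$ (the ``if'' direction) and to show that every other graph on at most five vertices violates (i) or (ii), hence has infinite $\mathrm{Out}$ (the ``only if'' direction). For the forward direction, since $C_5$ is vertex-transitive it suffices to test a single vertex $u$: there $\Gamma\setminus\mathrm{St}(u)$ is a single edge, so (i) holds, and the two neighbours of $u$ (which are non-adjacent in $C_5$) lie together in no star other than $\mathrm{St}(u)$, so (ii) holds.

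For the converse I would first make two structural reductions, each forcing (ii) to fail. First, if some vertex $u$ has degree at most $1$ and $\Gamma$ has another vertex, then $\mathrm{Lk}(u)$ is empty or a singleton $\{w\}$, so $\mathrm{Lk}(u)\subseteq\mathrm{St}(v)$ for some $v\neq u$ (take $v=w$, or any $v$ when $\mathrm{Lk}(u)=\varnothing$), and (ii) fails; thus we may assume minimum degree $\geq 2$. Second, if some vertex $v$ is adjacent to all others, then $\mathrm{St}(v)=V(\Gamma)$ contains every link, so (ii) fails for any $u\neq v$; thus we may assume maximum degree $\leq n-2$, where $n=|V(\Gamma)|$. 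These bounds are incompatible unless $n\geq 4$, so every graph on at most three vertices already fails; on four vertices they force $2$-regularity, i.e. $C_4$, for which $\mathrm{Lk}(u)\subseteq\mathrm{St}(u')$ for the antipodal vertex $u'$, so (ii) fails there too.

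It then remains to treat $n=5$ with all degrees in $\{2,3\}$. Writing $b$ for the number of degree-$3$ vertices, the handshake lemma forces $b$ even, so $b\in\{0,2,4\}$, giving degree sequences $(2,2,2,2,2)$, $(3,3,2,2,2)$ and $(3,3,3,3,2)$. I would classify the realising graphs by passing to the complement: $(2,2,2,2,2)$ yields only $C_5$; the complement of $(3,3,2,2,2)$ has degree sequence $(2,2,2,1,1)$, realised exactly by $P_5$ and by $C_3\sqcup K_2$, so $\Gamma\in\{\overline{P_5},\,K_{2,3}\}$; and the complement of $(3,3,3,3,2)$ has degree sequence $(2,1,1,1,1)$, realised only by $P_3\sqcup K_2$, so $\Gamma=\overline{P_3\sqcup K_2}$. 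For each of these three non-$C_5$ graphs one exhibits a degree-$2$ vertex $u$ and a distinct $v$ with $\mathrm{Lk}(u)\subseteq\mathrm{St}(v)$: in $K_{2,3}$ any two vertices of the size-$3$ part share a link, while in $\overline{P_5}$ and $\overline{P_3\sqcup K_2}$ the two neighbours of the degree-$2$ vertex both lie in the star of a suitable degree-$3$ vertex. Hence (ii) fails in every case, and combined with the forward direction this proves the equivalence.

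The argument is a finite verification, so the only real obstacle is making the enumeration airtight, namely confirming that the degree-sequence-to-graph classifications on five vertices are complete; the complementation step handles this cleanly since complementation is a bijection on graphs. I would also flag the degenerate one-vertex graph, where $A_\Gamma\cong\Z$ has $\mathrm{Out}\cong\Z/2\Z$ finite: the statement is understood for graphs with at least two vertices, and this case should be excluded or noted explicitly.
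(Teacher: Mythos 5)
Your proof is correct and follows exactly the route the paper intends: the paper gives no written argument beyond the remark that the lemma follows from Wade's criterion, and your reduction (minimum degree $\geq 2$, no dominating vertex, then the degree-sequence/complement classification on five vertices) is a complete and accurate filling-in of that finite verification. Your caveat about the one-vertex graph is also well taken --- it satisfies Wade's conditions vacuously and $\mathrm{Out}(\mathbb{Z}) \cong \mathbb{Z}/2\mathbb{Z}$ is finite, so the lemma as stated should indeed be read as excluding (or separately noting) that degenerate case.
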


\subsection{Virtual direct products}
Solvability of equations in groups leads to a positive solution to the conjugacy problem. In particular, if the equation $X^{-1}uXv^{-1} = 1_{G}$ is decidable for all elements $u,v \in G$ and some variable $X$, then we can determine whether or not $u \sim v$ in $G$ for all elements $u,v \in G$. This allows us to use the following result with respect to RAAGs. 

\begin{thm}\cite[Theorem 3.1]{ciobanu_equations_2020}\label{thm:Ciobanu Eq}
    Let $G$ be a group which contains a subgroup $A \times H_{1} \times \dots \times H_{n}$ of finite index, where $A$ is virtually abelian and each $H_{i}$ are non-elementary hyperbolic. Then the conjugacy problem $\mathrm{CP}(G)$ is solvable.
\end{thm}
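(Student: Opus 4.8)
The plan is to decide conjugacy through \emph{equation solvability}, exactly as the preceding paragraph signals: for $u,v\in G$ we have $u\sim v$ if and only if the one-variable equation $x^{-1}uxv^{-1}=1_{G}$ has a solution $x\in G$, so it suffices to decide satisfiability of conjugacy equations over $G$. The reason to phrase matters in terms of equations rather than conjugacy directly is that $\mathrm{CP}$ is notoriously \emph{not} inherited by finite extensions (Collins--Miller, \cite{collins_conjugacy_1977}), whereas satisfiability of systems of equations with constants --- the \emph{Diophantine problem} --- behaves well under commensurability. Accordingly I would aim to prove the stronger statement that the Diophantine problem is decidable in $G$, and then specialise to the single conjugacy equation. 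Note in particular that the coordinatewise description of conjugacy in a direct product shows $\mathrm{CP}(K)$ is already decidable for $K=A\times H_{1}\times\cdots\times H_{n}$ (conjugate each coordinate in its own factor), but this is \emph{not} enough to conclude $\mathrm{CP}(G)$, which is precisely why the equation upgrade is needed.

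The central structural input I would invoke is that decidability of the Diophantine problem is a commensurability invariant among finitely generated groups: it passes both to and from finite-index subgroups. Granting this, decidability in $G$ reduces to decidability in the finite-index subgroup $K=A\times H_{1}\times\cdots\times H_{n}$. Concretely the ``passing up'' direction is implemented by fixing coset representatives $G=\bigsqcup_{j}t_{j}K$, writing an unknown $x\in G$ as $x=t_{j}k$ with $k\in K$, and rewriting each equation over $G$ as a finite disjunction (over $j$) of systems over $K$ whose constants are the computable elements $t_{j}^{-1}a t_{j}$; a normal core $N\trianglelefteq G$ with $N\le K$ and $[G:N]<\infty$ lets one decide via the finite quotient $G/N$ which disjuncts can possibly have solutions in $K$.

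It then remains to solve the Diophantine problem over the direct product $K$. Because multiplication in $K$ is coordinatewise, any system of word equations over $K$ splits into independent coordinate systems, one over each of $A,H_{1},\dots,H_{n}$, and the whole system is satisfiable over $K$ if and only if every coordinate system is satisfiable over the corresponding factor. Each factor is then handled by a known decidability result: in the virtually abelian group $A$ the problem reduces, after passing to a finite-index abelian subgroup, to solvability of systems of linear Diophantine equations and congruences, which is decidable; and in each non-elementary hyperbolic $H_{i}$ the Diophantine problem is decidable by the work of Rips--Sela and Dahmani--Guirardel on equations in hyperbolic groups. Combining the coordinatewise answers, and running back through the finite disjunction of the previous step, decides the conjugacy equation over $G$ and hence $\mathrm{CP}(G)$.

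The main obstacle is squarely the commensurability-invariance step, and in two respects. First, one must justify carefully that satisfiability of systems of equations genuinely descends through the finite extension $N\le G$ and transports to $K$ --- this is the substantive replacement for the false ``$\mathrm{CP}$ passes through finite index''. Second, conjugation by the coset representatives $t_{j}$ need not preserve the direct decomposition of $K$: the induced action can permute isomorphic hyperbolic factors and act on each factor by an automorphism, so the equations reaching a given factor are in general \emph{twisted}. Managing this by absorbing the automorphisms into the constants (reducing to ordinary equations per factor) or by appealing to decidability of equations-with-automorphisms in the factors --- rather than any single factorwise decidability result --- is the delicate part of the argument.
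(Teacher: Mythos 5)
First, a point of order: the paper you are working from does not prove this statement at all --- it is imported verbatim from \cite{ciobanu_equations_2020} and used as a black box to deduce \cref{cor:prod conj hyp}, so the only fair comparison is with the cited source. Your overall architecture (conjugacy as the single equation $x^{-1}uxv^{-1}=1_G$, reduction to the finite-index direct product, coordinatewise splitting, then Rips--Sela/Dahmani--Guirardel for the hyperbolic factors and linear arithmetic for the virtually abelian one) is indeed the right skeleton and matches the spirit of that source. The genuine gap is the step you promote to ``central structural input'': the claim that decidability of the Diophantine problem is a commensurability invariant, passing \emph{both to and from} finite-index subgroups, is not an established theorem and cannot be invoked as a black box. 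The descent direction is already problematic in general (constraining variables to range over a finite-index subgroup is not a Diophantine condition), and it is also unnecessary here. The ascent direction, which you do need, is exactly where the difficulty lives: after substituting $x=t_jk$ and pushing coset representatives to one side, the representatives must cross \emph{variables}, producing occurrences $\varphi_{t}(k)=t^{-1}kt$ twisted by automorphisms of the normal core. Your proposed repair of ``absorbing the automorphisms into the constants'' fails precisely at this point: only constants can be conjugated into computable elements of $K$; for a variable $y$ the relation $z=t^{-1}yt$ is itself an equation over $G$ with the constant $t\notin K$, so eliminating it this way is circular. Flagging the twisting as ``the delicate part'' is correct, but deferring it defers essentially all of the mathematical content of the theorem.

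What actually closes the gap, and what your outline is missing, is a structural analysis of the induced automorphisms. Since each twisting automorphism is conjugation by a coset representative, and some fixed power of each representative lies in the core $N\cong A'\times H_1'\times\cdots\times H_n'$, whose conjugation action on a factor $H_i'$ is \emph{inner} in that factor, the induced automorphisms permute the (directly indecomposable, non-elementary) hyperbolic factors and have finite order in $\mathrm{Out}(H_i')$ on each orbit. This is what lets one replace the twisted system over a hyperbolic factor by an ordinary system over a finite extension of that factor --- which is again hyperbolic, so Dahmani--Guirardel applies --- while on the virtually abelian coordinate the twisting is a linear map and the system remains a decidable linear/congruence system (this coordinate is the one place where your sketch genuinely works as written). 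Note also that for the conjugacy application alone you do not need any general Diophantine transfer: \cref{cor:CP extensions iff TCP in base} already reduces $\mathrm{CP}(G)$ to finitely many \emph{twisted} conjugacy equations $\varphi(k)^{-1}mk=m'$ over the core, and the finite-order-in-$\mathrm{Out}$ observation above reduces each of these, orbit by orbit of permuted factors, to ordinary conjugacy in hyperbolic or virtually abelian groups. Without some version of this argument, your proposal proves conjugacy only for the direct product $K$ itself --- which, as you correctly note, was never the issue.
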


\begin{cor}\label{cor:prod conj hyp}
    Let $A_{\Gamma}$ be a RAAG such that $A_{\Gamma} \cong A \times H_{1} \times \dots \times H_{n}$, where $A$ is virtually abelian and each $H_{i}$ is isomorphic to a free group. Then any finite extensions $A_{\phi} = \RAAG \rtimes_{\phi} \langle t \rangle$, as in \cref{semidirect1}, have solvable conjugacy problem.
\end{cor}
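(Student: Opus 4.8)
The plan is to apply \cref{thm:Ciobanu Eq} directly to $G = A_{\phi}$, using $\RAAG$ itself as the required finite-index subgroup. By the construction in \cref{semidirect1}, $\RAAG$ is a normal subgroup of index $m$ in $A_{\phi}$, so the only thing to verify is that $\RAAG$ can be written in the product form demanded by \cref{thm:Ciobanu Eq}, namely $B \times K_{1} \times \dots \times K_{l}$ with $B$ virtually abelian and each $K_{j}$ \emph{non-elementary} hyperbolic.

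The one mismatch between the hypothesis $\RAAG \cong A \times H_{1} \times \dots \times H_{n}$ and the form required by \cref{thm:Ciobanu Eq} is that a free factor $H_{i}$ may be isomorphic to $\Z$, which is virtually cyclic and hence elementary. To remedy this, I would reindex so that $H_{1}, \dots, H_{k}$ are the factors of rank $\geq 2$ and $H_{k+1}, \dots, H_{n}$ are those isomorphic to $\Z$. Since $\Z$ is abelian, the subgroup $B := A \times H_{k+1} \times \dots \times H_{n}$ is again a direct product of a virtually abelian group with finitely many copies of $\Z$, and so is itself virtually abelian. Each remaining factor $H_{1}, \dots, H_{k}$ is a free group of rank at least $2$, which is non-elementary hyperbolic. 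Thus $\RAAG \cong B \times H_{1} \times \dots \times H_{k}$ is exactly of the required form.

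With this regrouping in hand, \cref{thm:Ciobanu Eq} applies with $G = A_{\phi}$ and finite-index subgroup $\RAAG = B \times H_{1} \times \dots \times H_{k}$, yielding that $\mathrm{CP}(A_{\phi})$ is solvable. In the degenerate case $k = 0$ (every free factor has rank one), $\RAAG$ is free abelian, hence $A_{\phi}$ is virtually abelian, whose conjugacy problem is solvable directly; alternatively one invokes \cref{thm:Ciobanu Eq} with no hyperbolic factors present.

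I expect the only genuine subtlety to be this absorption step: the hypotheses of \cref{thm:Ciobanu Eq} insist on \emph{non-elementary} hyperbolic factors, so any rank-one free factor must be folded into the virtually abelian part before the theorem can be invoked. Everything else is a direct citation, since $\RAAG \trianglelefteq A_{\phi}$ has finite index by the very definition of the extension in \cref{semidirect1}.
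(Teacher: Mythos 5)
Your proposal is correct and takes essentially the same approach as the paper: the paper treats this as an immediate corollary of \cref{thm:Ciobanu Eq}, applied to $G = A_{\phi}$ with $\RAAG$ as the finite-index subgroup, exactly as you do. Your regrouping step, absorbing the rank-one free factors into the virtually abelian part $B$ (valid, since a direct product of a virtually abelian group with finitely many copies of $\Z$ is virtually abelian), correctly fills in the one detail the paper leaves implicit, namely that factors isomorphic to $\Z$ are elementary and so cannot serve as the $H_{i}$ in \cref{thm:Ciobanu Eq}.
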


\begin{exmp}\label{exmp:eqns direct product}
    Let $\Gamma = K_{m,n}$ be a complete bipartite graph. Then $A_{\Gamma} \cong F_{m} \times F_{n}$, and so any finite extension $A_{\phi}$, as in \cref{semidirect1}, will have solvable conjugacy problem by \cref{cor:prod conj hyp}.
\end{exmp}
\begin{exmp}
    Let $\Gamma$ be a defining graph for a RACG such that $W_{\Gamma} = W_{\Gamma_{1}} \times \dots \times W_{\Gamma_{n}}$, where each $\Gamma_{i}$ ($1 \leq i \leq n)$ does not contain any induced squares. By \cref{RACG:hyperbolic}, each $W_{\Gamma_{i}}$ is hyperbolic, and so any finite extension $W_{\phi}$, as in \cref{semidirect1}, will have solvable conjugacy problem by \cref{cor:prod conj hyp}.
\end{exmp}
The following result is a consequence of \cite[Proposition 2.4]{senden_twisted_2021}, which shows that twisted conjugacy can behave well under direct products. 
\begin{prop}
    Let $G = \times^{n}_{i} G_{i}$ be a direct product of groups $G_{i}$. Define the diagonal subgroup $\mathrm{Diag}(G) \leq \mathrm{Aut}(G)$ to be all maps of the form
\[ (g_{1}, \dots, g_{n}) \mapsto (\varphi_{1}(g_{1}), \dots, \varphi_{n}(g_{n})),
\]
where each $\varphi_{i} \in \mathrm{Aut}(G_{i})$ for all $1 \leq i \leq n$. Then
\[ (g_{1}, \dots, g_{n}) \sim_{\varphi} (h_{1}, \dots, h_{n}) \Leftrightarrow g_{i} \sim_{\varphi_{i}} h_{i} \; \text{for all} \; 1 \leq i \leq n.
\]
\end{prop}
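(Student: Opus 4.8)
The plan is to prove both implications by a direct coordinate-wise computation, the key point being that both the group multiplication in $G = \times^{n}_{i} G_{i}$ and the automorphism $\varphi = (\varphi_{1}, \dots, \varphi_{n}) \in \mathrm{Diag}(G)$ act independently on each factor. Throughout I write $u = (g_{1}, \dots, g_{n})$ and $v = (h_{1}, \dots, h_{n})$, and I recall from \cref{defn:twisted conjugacy} that $u \sim_{\varphi} v$ means there exists $w \in G$ with $v =_{G} \varphi(w)^{-1}uw$.

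For the forward direction, suppose $u \sim_{\varphi} v$, so there is some $w = (w_{1}, \dots, w_{n}) \in G$ with $v =_{G} \varphi(w)^{-1}uw$. Since $\varphi$ is diagonal we have $\varphi(w) = (\varphi_{1}(w_{1}), \dots, \varphi_{n}(w_{n}))$, and evaluating the product in $G$ coordinate-wise gives
\[ v =_{G} \left(\varphi_{1}(w_{1})^{-1}g_{1}w_{1}, \; \dots, \; \varphi_{n}(w_{n})^{-1}g_{n}w_{n}\right). \]
Comparing the $i$-th coordinates yields $h_{i} =_{G_{i}} \varphi_{i}(w_{i})^{-1}g_{i}w_{i}$, so $g_{i} \sim_{\varphi_{i}} h_{i}$ with witness $w_{i}$, for each $1 \leq i \leq n$. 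For the reverse direction, suppose $g_{i} \sim_{\varphi_{i}} h_{i}$ for every $i$, with witnesses $w_{i} \in G_{i}$ satisfying $h_{i} =_{G_{i}} \varphi_{i}(w_{i})^{-1}g_{i}w_{i}$. Setting $w = (w_{1}, \dots, w_{n}) \in G$, the same coordinate-wise computation shows $\varphi(w)^{-1}uw =_{G} v$, so $u \sim_{\varphi} v$.

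There is no genuine obstacle here: the essential hypothesis is precisely that $\varphi$ lies in $\mathrm{Diag}(G)$, which guarantees that the single twisted conjugacy equation $v =_{G} \varphi(w)^{-1}uw$ decouples into $n$ independent equations, one in each factor $G_{i}$. The diagonal assumption is what makes this work, since a general automorphism of a direct product may mix the factors and the decomposition would then fail. Alternatively, the statement follows directly by specialising \cite[Proposition 2.4]{senden_twisted_2021} to the case where $\varphi$ has no factor-mixing component, which is exactly the condition defining $\mathrm{Diag}(G)$.
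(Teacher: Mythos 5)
Your proof is correct. Both directions are exactly as you say: in a direct product, multiplication, inversion, and any diagonal automorphism all act coordinate-wise, so the single equation $v =_{G} \varphi(w)^{-1}uw$ is equivalent to the system of $n$ equations $h_{i} =_{G_{i}} \varphi_{i}(w_{i})^{-1}g_{i}w_{i}$, and the witnesses pass back and forth between the two formulations without loss. The difference from the paper is that the paper supplies no argument at all: it states the proposition as ``a consequence of \cite[Proposition 2.4]{senden_twisted_2021}'' and leaves the verification to that reference, which treats twisted conjugacy in direct products in greater generality (including automorphisms that may mix or permute the factors). Your direct computation buys a self-contained, elementary proof that makes transparent exactly where the diagonal hypothesis is used --- namely, that $\varphi$ does not mix factors, which is what lets the equation decouple --- whereas the paper's citation buys brevity and places the statement in the context of a more general structural result. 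Your closing remark correctly identifies that your argument is precisely the specialisation of Senden's result to the non-mixing case, so the two routes are consistent; yours simply inlines the part of the external proof that is actually needed.
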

\begin{cor}
    Let $G = \times^{n}_{i} G_{i}$ be a direct product of groups $G_{i}$. If $\varphi \in \mathrm{Diag}(G)$, then $\mathrm{TCP}_{\varphi}(G)$ is solvable if and only if $\mathrm{TCP}_{\varphi_{i}}(G_{i})$ is solvable for all $1 \leq i \leq n$.
\end{cor}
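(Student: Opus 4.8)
The plan is to deduce everything from the preceding Proposition, which identifies $\varphi$-twisted conjugacy in $G$ with coordinate-wise $\varphi_i$-twisted conjugacy in the factors. Throughout I fix $\varphi \in \mathrm{Diag}(G)$ determined by the automorphisms $\varphi_1, \dots, \varphi_n$, and I take the generating set $X = X_1 \cup \dots \cup X_n$ for $G$, where $X_i$ generates $G_i$. Since the factors commute, any word $w \in X^{\ast}$ projects effectively to each factor: deleting the letters of $w$ that do not lie in $X_i$ yields a word $w_i \in X_i^{\ast}$ representing the $i$-th coordinate of the element represented by $w$. This translation, together with the trivial inclusion $X_j^{\ast} \hookrightarrow X^{\ast}$ in the other direction, is what lets the two decision problems communicate.

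For the ``if'' direction, assume each $\mathrm{TCP}_{\varphi_i}(G_i)$ is solvable. Given input words $u, v \in X^{\ast}$ representing $g = (g_1, \dots, g_n)$ and $h = (h_1, \dots, h_n)$, first compute the coordinate words $u_i, v_i \in X_i^{\ast}$ as above. By the Proposition, $g \sim_{\varphi} h$ if and only if $g_i \sim_{\varphi_i} h_i$ for every $i$. Running the algorithm for $\mathrm{TCP}_{\varphi_i}(G_i)$ on the pair $(u_i, v_i)$ for each $i$ and returning ``yes'' precisely when all $n$ runs return ``yes'' therefore decides $\mathrm{TCP}_{\varphi}(G)$; a finite conjunction of decidable predicates is decidable.

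For the ``only if'' direction, assume $\mathrm{TCP}_{\varphi}(G)$ is solvable and fix an index $j$. Given $u, v \in X_j^{\ast}$, view them as words over $X$ representing the elements $\hat{u} = (1, \dots, u, \dots, 1)$ and $\hat{v} = (1, \dots, v, \dots, 1)$ of $G$, where the nontrivial entry sits in the $j$-th coordinate. Applying the Proposition to $\hat{u}$ and $\hat{v}$ shows that $\hat{u} \sim_{\varphi} \hat{v}$ if and only if $u \sim_{\varphi_j} v$ together with $1 \sim_{\varphi_i} 1$ for all $i \neq j$. The latter conditions are automatic, since taking the conjugating element $w = 1$ gives $\varphi_i(1)^{-1} \cdot 1 \cdot 1 = 1$, so $1 \sim_{\varphi_i} 1$ always holds. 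Hence $\hat{u} \sim_{\varphi} \hat{v}$ if and only if $u \sim_{\varphi_j} v$, and the assumed algorithm for $\mathrm{TCP}_{\varphi}(G)$ decides the left-hand side, hence the right-hand side; this solves $\mathrm{TCP}_{\varphi_j}(G_j)$.

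The content of the argument is carried entirely by the Proposition, so there is no serious obstacle. The only points requiring (minor) care are the two algorithmic translations — projecting a word in $G$ onto the factors and embedding a factor word back into $G$ — and the observation that the trivial element is always $\varphi_i$-twisted conjugate to itself, which is what makes the embedding in the reverse direction faithful to the twisted conjugacy relation.
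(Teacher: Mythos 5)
Your proof is correct and follows exactly the route the paper intends: the paper states this as an immediate corollary of the preceding proposition (coordinate-wise characterisation of $\varphi$-twisted conjugacy) and gives no further argument, and your write-up simply makes that deduction explicit, including the two word-level translations (projection onto factors, inclusion of a factor) and the observation that $1 \sim_{\varphi_i} 1$ always holds. No gaps.
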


\begin{exmp}
    Let $A_{\Gamma} = F_{m} \times F_{n}$, i.e. $\Gamma = K_{m,n}$, and let $\varphi \in \mathrm{Diag}(A_{\Gamma})$. Then the $\mathrm{TCP}_{\varphi}(A_{\Gamma})$ is solvable, since the twisted conjugacy problem is solvable in free groups (see \cref{thm:TCP free}). This provides an alternative partial solution to \cref{exmp:eqns direct product}. 
\end{exmp}

\subsection{Extensions which are graph products}
By \cite[Theorem 3.24]{green_graph_1990}, a graph product has solvable conjugacy problem if and only if each vertex group has solvable conjugacy problem. In several cases, the finite extension $A_{\phi}$ as in \cref{semidirect1} is a graph product of cyclic groups. We provide examples of extensions of RAAGs, with respect to both length-preserving and non-length preserving automorphisms, which have this property. 
\comm{
\begin{rmk}
    In our presentation from \cref{semidirect1}, we will assume that the order of the generator $t$ of the cyclic group is equal to the order of the automorphism $\phi \in \mathrm{Aut}(\RAAG)$.
\end{rmk}}
\begin{prop}\label{prop:examples extensions graph products}
    There exists RAAGs $\RAAG$ and automorphisms $\phi \in \mathrm{Aut}(\RAAG)$, where $\phi$ is an inversion, graph automorphism or order two partial conjugation, such that the extension $A_{\phi}$ as in \cref{semidirect1} is a graph product of cyclic groups.
\end{prop}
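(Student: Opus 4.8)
The statement is an existence claim, so the plan is to exhibit, for each of the three listed automorphism types, an explicit small RAAG together with an order-two automorphism $\phi$ for which the presentation \cref{semidirect1} can be massaged into a presentation of a graph product of cyclic groups; solvability of the conjugacy problem then follows from \cite[Theorem 3.24]{green_graph_1990}. The uniform mechanism I would use throughout is a Tietze transformation that introduces a new generator folding $t$ together with the element used to twist, so that the defining relation $t^{-1}xt = \phi(x)$ collapses into either a vertex-order relation or a commutation relation of graph-product type. In every case I take $m = 2$, so that $t^{2} = 1$ and hence $t^{-1} = t$, which is what makes the folding work.

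First I would handle the two length-preserving types. For an inversion, take $\RAAG = \Z = \langle x\rangle$ and $\phi\colon x \mapsto x^{-1}$, so that $\G = \langle x, t \mid t^{2},\, t^{-1}xt = x^{-1}\rangle$. Setting $s = xt$ one checks, using $txt = x^{-1}$, that $s^{2} = x(txt) = 1$ and $x = st$, which rewrites the presentation as $\langle s, t \mid s^{2}, t^{2}\rangle \cong \Z/2\Z \ast \Z/2\Z$, the graph product on two non-adjacent vertices with $\Z/2\Z$ vertex groups. For a graph automorphism, take $\RAAG = \langle x, y\rangle$ free of rank two and $\phi$ the transposition $x \leftrightarrow y$; since $y = t^{-1}xt$ I can eliminate $y$, and the second twisted relation $t^{-1}yt = x$ becomes automatic, giving $\G \cong \langle x, t \mid t^{2}\rangle \cong \Z \ast \Z/2\Z$, again a graph product of cyclic groups.

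The partial-conjugation case needs more care, because a genuine partial conjugation $b \mapsto aba^{-1}$ satisfies $\phi^{k}(b) = a^{k}ba^{-k}$ and so has infinite order, meaning it cannot by itself define a finite cyclic extension. I would therefore combine it with the inversion of the conjugating generator: on $\RAAG = \langle a, b\rangle$ free of rank two set $\phi\colon a \mapsto a^{-1},\, b \mapsto aba^{-1}$, which is order two since $\phi^{2}(a) = a$ and $\phi^{2}(b) = a^{-1}(aba^{-1})a = b$. In the extension, introducing $s = ta$ yields (again using $t^{2} = 1$) the relations $s^{2} = 1$ and $[s,b] = 1$, together with $a = ts$, so that $\G \cong \langle b, s, t \mid s^{2}, t^{2}, [s,b]\rangle \cong (\Z \times \Z/2\Z) \ast \Z/2\Z$, a graph product of cyclic groups on the graph with the single edge $\{b,s\}$.

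The step requiring the most attention is confirming that each rewritten presentation is \emph{exactly} a graph product of cyclic groups, namely that after the Tietze moves every remaining relation is either a vertex-order relation or a commutation between two generators, with no hidden consequence forcing further relations; the index-two check for the free-group examples and the verification that no commutator beyond $[s,b]$ is implied in the last example are the concrete crux. Conceptually, though, the only genuine obstacle is the order-two constraint in the partial-conjugation example, which I resolve by pairing the partial conjugation with the inversion of its conjugating letter so that the square of the automorphism becomes trivial.
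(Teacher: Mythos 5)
Your proposal is correct, and it rests on the same mechanism as the paper's proof: Tietze transformations introducing generators of the form $s = tx$, so that the twisting relations $t^{-1}xt = \phi(x)$ collapse into order-two relations and commutations, exhibiting $\G$ as a graph product of cyclic groups, with solvability of conjugacy then following from \cite[Theorem 3.24]{green_graph_1990}. The difference lies in the examples chosen. For inversions the paper proves something more general: for an \emph{arbitrary} RAAG and an arbitrary subset $D \subseteq V(\Gamma)$ spanning a completely disconnected induced subgraph, inverting the generators in $D$ yields a graph product of cyclic groups; your infinite dihedral example $\Z \rtimes_{\phi} \Z/2\Z \cong \Z/2\Z \ast \Z/2\Z$ is the minimal instance of this. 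For graph automorphisms the paper reflects a path of odd length, while you swap the two generators of $F_{2}$; the rewriting is the same in spirit, and your elimination of $y = t^{-1}xt$ is valid. The genuine divergence is the partial-conjugation case: the paper disposes of it by citing \cite[Theorem 4.12]{crowe_conjugacy_2023}, whereas you construct a self-contained example ($a \mapsto a^{-1}$, $b \mapsto aba^{-1}$ on $F_{2}$, giving $(\Z \times \Z/2\Z) \ast \Z/2\Z$) and verify the presentation rewriting directly; your computations ($s^{2}=1$, $[s,b]=1$, and the converse recovery of the twisting relations) are correct. You also make explicit a point the paper leaves unsaid: a pure partial conjugation of a RAAG has infinite order, since $\phi^{k}(b) = a^{k}ba^{-k} \neq b$ when $a$ and $b$ are non-adjacent, so the phrase ``order two partial conjugation'' in the statement can only be realised by composing with an inversion of the conjugating letter --- which is precisely what both your example and the paper's cited example do. Your route is thus more elementary and self-contained, at the cost of the generality the paper obtains in the inversion case.
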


\begin{proof}
    We will consider three examples of RAAGs and automorphisms where the extension $A_{\phi}$ is a graph product of cyclic groups.

    First, let $\RAAG$ be a RAAG, and consider the following composition of inversions. Let $D \subseteq V(\Gamma)$ such that the induced subgraph $\Gamma_{D}$ generated by the vertices in $D$ is completely disconnected. We can define the following automorphism:
\begin{align*}
    \phi_{1}: X &\rightarrow X \\
    a &\mapsto a^{-1} \quad \text{for all} \; a \in D, \\
    b &\mapsto b \quad \quad \text{for all} \; b \in X \setminus D.
\end{align*}
Our presentation for $A_{\phi_{1}} = A_{\Gamma} \rtimes_{\phi_{1}} \mathbb{Z}/2\Z$ becomes
\[ P = \langle V(\Gamma), t \mid [s_{i},s_{j}] = 1 \; \forall \; \{s_{i},s_{j}\} \in E(\Gamma), \; t^{2} = 1, R_{1}, R_{2}  \rangle,
\]
where $R_{1}$ denotes the set of relations of the form $[t,s_{i}] = 1$ for all $s_{i} \in X \setminus D$, and $R_{2}$ denotes the set of relations of the form $(ts_{i})^{2} = 1$ for all $s_{i} \in D$. Let $E_{R}$ denote the set of relations of the form $[s_{i}, s_{j}] = 1$ for all edges $\{s_{i}, s_{j}\} \in E(\Gamma)$. 

Let $u_{i} = ts_{i}$, for all $s_{i} \in D$. We can add new generators $u_{i}$, and remove the generators $s_{i}$ using this relation. Our presentation becomes
\[ P = \langle V(\Gamma) \setminus D, \{u_{i}\}, t \mid \overline{E}_{R}, R_{1}, t^{2} = 1, u^{2}_{i} = 1 \rangle,
\]
where $\overline{E}_{R}$ is the set of relations from $E_{R}$, where any $s_{i} \in D$ have been replaced by $tu_{i}$. This gives us relations of the form $[tu_{i}, s_{j}] = 1,$ where $s_{j} \not \in D$ and $[s_{i}, s_{j}] = 1$ in our original presentation. Note $[t, s_{j}] = 1$ for all $s_{j} \not \in D$, and so we can write 
\[ 1 = [tu_{i}, s_{j}] = u_{i}ts^{-1}_{j}tu_{i}s_{j} = u_{i}s^{-1}_{j}u_{i}s_{j}.
\]
Therefore we can rewrite all relations of the form $[tu_{i}, s_{j}] = 1$ as $[u_{i}, s_{j}] = 1$. This gives us a presentation of a graph product of cyclic groups. 

For the second example, let $\Gamma$ be a path of odd length, i.e. $\Gamma$ is of the form
    \begin{center}
        \begin{tikzpicture}
        \filldraw[black] (0,0) circle (1.5pt);
        \filldraw[black] (1,0) circle (1.5pt);
        \filldraw[black] (2,0) circle (1.5pt);
        \filldraw[black] (-1,0) circle (1.5pt);
        \filldraw[black] (-2,0) circle (1.5pt);
        \node at (3,0) {\ldots};
        \node at (-3,0) {\ldots};
        \filldraw[black] (4,0) circle (1.5pt);
        \filldraw[black] (-4,0) circle (1.5pt);
        \draw (0,0) -- (1,0);
        \draw (0,0) -- (-1,0);
        \draw (2,0) -- (1,0);
        \draw (-2,0) -- (-1,0);
        \draw (-4, 0) -- (-3.5, 0);
        draw (4, 0) -- (3.5, 0);
        \node at (0, 0.3) {$x$};
        \node at (1, 0.3) {$x_{1R}$};
        \node at (-1, 0.3) {$x_{1L}$};
        \node at (2, 0.3) {$x_{2R}$};
        \node at (-2, 0.3) {$x_{2L}$};
        \node at (4, 0.3) {$x_{kR}$};
        \node at (-4, 0.3) {$x_{kL}$};
    \end{tikzpicture}
    \end{center}
    Let $\phi_{2} \colon x_{iL} \leftrightarrow x_{iR}$ be a reflection (for all $1 \leq i \leq k)$. Our presentation for $A_{\phi_{2}}$ is 
    \begin{align*}
        P = \langle &x, \{x_{iL}, x_{iR} \mid i = 1, \dots, k\}, t \mid [x, x_{1L}] = 1, [x, x_{1R}] = 1, t^{2} = 1, [x,t] = 1, \\ 
        & [x_{iL}, x_{(i+1)L}] = 1, [x_{iR}, x_{(i+1)R}] \; (i = 1, \dots, k-1), \; tx_{iL}t = x_{iR} \; (i = 1, \dots k) \; \rangle.
    \end{align*}
    We can replace all generators $x_{iR}$ with $tx_{iL}t$, and remove the final relation, to get 
    \begin{align*}
        P = \langle &x, \{x_{iL} \mid i=1,\dots, k\}, t \mid [x, x_{1L}] = 1, [x, tx_{1L}t] = 1, t^{2} = 1, [x,t] = 1, \\
        &[x_{iL}, x_{(i+1)L}] = 1, [tx_{iL}t, tx_{(i+1)L}t] = 1 \; (i = 1, \dots, k-1) \;\rangle.
    \end{align*}
    All relations of the form $[tx_{iL}t, tx_{(i+1)L}t] = 1$ can be removed, using the remaining relations. Similarly the relation $[x, tx_{1L}t] = 1$ can be removed, and so we are left with
    \begin{align*}
        P = \langle &x, \{x_{iL} \mid i=1,\dots, k\}, t \mid [x, x_{1L}] = 1,  t^{2} = 1, [x,t] = 1, \\
        &[x_{iL}, x_{(i+1)L}] = 1 \; (i = 1, \dots, k-1) \; \rangle.
    \end{align*}
    Again we have a graph product of cyclic groups.

    Finally recall \cite[Theorem 4.12]{crowe_conjugacy_2023}. We have an example of a virtual RAAG $A_{\phi}$ where $\phi \in \mathrm{Aut}(\RAAG)$ is a finite order non-length preserving automorphisms, and $A_{\phi}$ is isomorphic to a graph product of cyclic groups. 
\end{proof}
\mixCPRAAGs
\comm{
\begin{exmp}\label{thm:tietze inversions graph product}
    We can construct a finite extension of any RAAG $\RAAG$ by a composition of inversions, such that $A_{\phi}$ as in \cref{semidirect1} is a graph product with solvable conjugacy problem.

    Let $D \subseteq X$ such that the induced subgraph $\Gamma_{D}$ generated by the vertices in $D$ is completely disconnected. We can define the following automorphism:
\begin{align*}
    \phi: X &\rightarrow X \\
    a &\mapsto a^{-1} \quad \text{for all} \; a \in D, \\
    b &\mapsto b \quad \quad \text{for all} \; b \in X \setminus D.
\end{align*}
Our presentation for $A_{\phi} = A_{\Gamma} \rtimes_{\phi} \mathbb{Z}/2\Z$ becomes
\[ P = \langle X, t \mid [s_{i},s_{j}] = 1 \; \forall \; \{s_{i},s_{j}\} \in E(\Gamma), \; t^{2} = 1, R_{1}, R_{2}  \rangle,
\]
where $R_{1}$ denotes the set of relations of the form $[t,s_{i}] = 1$ for all $s_{i} \in X \setminus D$, and $R_{2}$ denotes the set of relations of the form $(ts_{i})^{2} = 1$ for all $s_{i} \in D$. Let $E_{R}$ denote the set of relations of the form $[s_{i}, s_{j}] = 1$ for all edges $\{s_{i}, s_{j}\} \in E(\Gamma)$. 

Let $u_{i} = ts_{i}$, for all $s_{i} \in D$. We can insert new generators $u_{i}$, and remove the generators $s_{i}$ using this relation. Our presentation becomes
\[ P = \langle X \setminus D, \{u_{i}\}, t \mid \overline{E}_{R}, R_{1}, t^{2} = 1, u^{2}_{i} = 1 \rangle,
\]
where $\overline{E}_{R}$ is the set of relations from $E_{R}$, where any $s_{i} \in D$ have been replaced by $tu_{i}$. This gives us relations of the form $[tu_{i}, s_{j}] = 1,$ where $s_{j} \not \in D$ and $[s_{i}, s_{j}] = 1$ in our original presentation. Note $[t, s_{j}] = 1$ for all $s_{j} \not \in D$, and so we can write 
\[ 1 = [tu_{i}, s_{j}] = u_{i}ts^{-1}_{j}tu_{i}s_{j} = u_{i}s^{-1}_{j}u_{i}s_{j}.
\]
Therefore we can rewrite all relations of the form $[tu_{i}, s_{j}] = 1$ as $[u_{i}, s_{j}] = 1$. This gives us a presentation of a graph product of cyclic groups. 
\end{exmp}}

\comm{
\begin{prop}\label{thm:tietze inversions graph product}
There exist finite extensions of RAAGs $A_{\Gamma}$ by a composition of inversions, such that $A_{\phi}$ as in \cref{semidirect1} is a graph product with solvable conjugacy problem.
\end{prop}

\begin{proof}[Proof of \cref{thm:tietze inversions graph product}]
We will rewrite our group presentation of $A_{\phi}$ from \cref{semidirect1}, using Tietze transformations, to obtain a presentation of a graph product of cyclic groups. Let $D \subseteq X$, such that the induced subgraph $\Gamma_{D}$ generated by the vertices in $D$ is completely disconnected. We can define the following automorphism:
\begin{align*}
    \phi: X &\rightarrow X \\
    a &\mapsto a^{-1} \quad \text{for all} \; a \in D, \\
    b &\mapsto b \quad \quad \text{for all} \; b \in X \setminus D.
\end{align*}
Our presentation for $A_{\phi} = A_{\Gamma} \rtimes_{\phi} \mathbb{Z}/2\Z$ becomes
\[ P = \langle X, t \mid [s_{i},s_{j}] = 1 \; \forall \; \{s_{i},s_{j}\} \in E(\Gamma), \; t^{2} = 1, R_{1}, R_{2}  \rangle,
\]
where $R_{1}$ denotes the set of relations of the form $[t,s_{i}] = 1$ for all $s_{i} \in X \setminus D$, and $R_{2}$ denotes the set of relations of the form $(ts_{i})^{2} = 1$ for all $s_{i} \in D$. Let $E_{R}$ denote the set of relations of the form $[s_{i}, s_{j}] = 1$ for all edges $\{s_{i}, s_{j}\} \in E(\Gamma)$. 

Let $u_{i} = ts_{i}$, for all $s_{i} \in D$. We can insert new generators $u_{i}$, and remove the generators $s_{i}$ using this relation. Our presentation becomes
\[ P = \langle X \setminus D, \{u_{i}\}, t \mid \overline{E}_{R}, R_{1}, t^{2} = 1, u^{2}_{i} = 1 \rangle,
\]
where $\overline{E}_{R}$ is the set of relations from $E_{R}$, where any $s_{i} \in D$ have been replaced by $tu_{i}$. This gives us relations of the form $[tu_{i}, s_{j}] = 1,$ where $s_{j} \not \in D$ and $[s_{i}, s_{j}] = 1$ in our original presentation. Note $[t, s_{j}] = 1$ for all $s_{j} \not \in D$, and so we can write 
\[ 1 = [tu_{i}, s_{j}] = u_{i}ts^{-1}_{j}tu_{i}s_{j} = u_{i}s^{-1}_{j}u_{i}s_{j}.
\]
Therefore we can rewrite all relations of the form $[tu_{i}, s_{j}] = 1$ as $[u_{i}, s_{j}] = 1$. This gives us a presentation of a graph product of cyclic groups. 
\end{proof}}
\comm{
There exist several examples of extensions of RAAGs with respect to graph automorphisms, which can be presented as graph products of cyclic groups. We provide a general example here, where the defining graph of our RAAG is a path of odd length. 

\begin{prop}
    Let $\Gamma$  Then the extension $A_{\phi}$ as in \cref{semidirect1} is a graph product with solvable conjugacy problem. 
\end{prop}

\begin{proof}
    Our presentation for $A_{\phi}$ is 
    \begin{align*}
        P = \langle &x, \{x_{iL}, x_{iR}\}, t \mid [x, x_{1L}] = 1, [x, x_{1R}] = 1, \\ 
        &[x_{iL}, x_{(i+1)L}] = 1, [x_{iR}, x_{(i+1)R}], t^{2} = 1, [x,t] = 1, tx_{iL}t = x_{iR} \rangle.
    \end{align*}
    We can replace all generators $x_{iR}$ with $tx_{iL}t$, and remove the final relation, to get 
    \begin{align*}
        P = \langle &x, \{x_{iL}\}, t \mid [x, x_{1L}] = 1, [x, tx_{1L}t] = 1, \\
        &[x_{iL}, x_{(i+1)L}] = 1, [tx_{iL}t, tx_{(i+1)L}t] = 1, t^{2} = 1, [x,t] = 1 \rangle.
    \end{align*}
    All relations of the form $[tx_{iL}t, tx_{(i+1)L}t] = 1$ can be removed, using the remaining relations. Similarly the relation $[x, tx_{1L}t] = 1$ can be removed, and so we are left with
    \[ P = \langle x, \{x_{iL}\}, t \mid [x, x_{1L}] = 1, [x_{iL}, x_{(i+1)L}] = 1, t^{2} = 1, [x,t] = 1 \rangle.
    \]
    This is precisely a graph product of cyclic groups, which has solvable conjugacy problem.
\end{proof}}
\comm{
We finish this section with an example of a finite extension by a non-length preserving automorphism. 
\nonlengthCP
\begin{proof}
    By \cref{thm:non len geos}, $A_{\phi}$ is isomorphic to a graph product of cyclic groups, which has solvable conjugacy problem. 
\end{proof}}

\section{Pilings}\label{sec:pilings}
The aim of this section is to prove the following.
\TCPRAAG
Recall decidability has already been shown by \cref{cor:CP solvable RAAGs len p CAT0} and \cref{cor:CP extensions iff TCP in base}. 

\begin{rmk}\label{rmk:RAM machines}
    Complexity of algorithms will be determined using a random access memory (RAM) machine, where basic arithmetical operations on integers are assumed to take constant time (see \cite[Section 1.2]{aho_design_1974} for further information). With this machinery, computations such as determining if two words are cyclic permutations of each other run in linear time, using standard pattern-matching algorithms \cite[Section 9]{aho_design_1974}. Note the same computation takes time $\mathcal{O}(\ell \mathrm{log}(\ell))$ on a Turing machine.
\end{rmk}

Pilings are a geometric tool which can be used to represent group elements uniquely in a RAAG. The inspiration for this object comes from a paper by Viennot \cite{labelle_heaps_1986}, which describes a geometric construction known as `heaps of pieces' for partially commutative monoids. This was adapted in \cite{crisp_conjugacy_2009} for RAAGs, and used to find a linear time solution to the conjugacy problem in RAAGs. We will summarise this algorithm and provide necessary definitions and results about pilings, before adapting this algorithm to solve \cref{thm:len p RAAGs pilings}.

We remind the reader that our convention is the opposite of \cite{crisp_conjugacy_2009}, in that an edge exists between two vertices $u, v \in V(\Gamma)$ in the defining graph $\Gamma$ if and only $u$ and $v$ commute in $A_{\Gamma}$. To recap notation, we let $V(\Gamma) = \{s_{1}, \dots, s_{r}\}$ denote the standard generating set for a RAAG $\RAAG$, and we let $X = V(\Gamma)^{\pm}$. 

\begin{defn}\label{defn:abstract piling}
An \emph{abstract piling} is an $r$-tuple of words, one for each vertex $s_{i} \in V(\Gamma)$ ($1 \leq i \leq r$), over the alphabet $\Sigma = \{+, -, 0\}$. We define the word associated with each vertex $s_{i} \in V(\Gamma)$ to be the \emph{$\sigma_{i}$-stack}. We define a function $\sigma \colon \{-1,0,1\} \rightarrow \Sigma$ which maps $1 \mapsto +, -1 \mapsto -,$ and $0 \mapsto 0$. Let $w = s^{\varepsilon_{i_{1}}}_{i_{1}}\dots s^{\varepsilon_{i_{n}}}_{i_{n}}  \in X^{\ast}$ be a word which represents a group element of $\RAAG$, where $\varepsilon_{i_{j}} = \pm 1$ and $s_{i_{j}} \in V(\Gamma)$, for all $1 \leq j \leq n$. We define a function $\pi^{\ast}$ which maps $w$ to an abstract piling as follows. 

First, let each $\sigma_{i}$-stack equal the empty word (for all $1 \leq i \leq r$). Reading $w$ from left to right, suppose the letter $s^{\varepsilon_{i_{j}}}_{i_{j}}$ is read, for some $1 \leq j \leq n$. Let $\alpha_{i_{j}}$ be the last letter of the $\sigma_{i_{j}}$-stack. We have two cases to consider.
\begin{enumerate}
    \item \textbf{Empty stack}: If $\sigma_{i_{j}}$ equals the empty word, then we rewrite the stack $\sigma_{i_{j}} \mapsto \sigma(\varepsilon_{i_{j}})$.
    \item \textbf{No cancellation}: If $\alpha_{i_{j}} \neq \sigma(-\varepsilon_{i_{j}})$, then we rewrite the stack $\sigma_{i_{j}} \mapsto \sigma_{i_{j}}\cdot \sigma(\varepsilon_{i_{j}})$, and rewrite stacks $\sigma_{k} \mapsto \sigma_{k}\cdot 0,$ for all $1 \leq k \neq i_{j} \leq r$ such that $[s_{i_{j}}, s_{k}] \neq 1$ in $A_{\Gamma}$. 
    \item \textbf{Cancellation}: If $\alpha_{i_{j}} = \sigma(-\varepsilon_{i_{j}})$, i.e. $\sigma_{i_{j}} = v\cdot \sigma(-\varepsilon_{i_{j}})$ for some word $v \in \Sigma^{\ast}$. This implies that $\sigma_{k} = v_{k}\cdot 0$ for some word $v_{k} \in \Sigma^{\ast}$, for all $1 \leq k \neq i_{j} \leq r$ such that $[s_{i_{j}}, s_{k}] \neq 1$ in $\RAAG$. We rewrite the stack $\sigma_{i_{j}} \mapsto v,$ and all stacks $\sigma_{k} \mapsto v_{k}$ for all $1 \leq k \neq i_{j} \leq r$ such that $[s_{i_{j}}, s_{k}] \neq 1$ in $\RAAG$. 
\end{enumerate}
Once all letters from $w$ have been read, we have the associated \emph{piling} of $w$, which we denote by $\pi^{\ast}(w) = (\sigma_{1}, \dots, \sigma_{r}) \in P$, where $P$ denotes the set of all possible pilings. 
\end{defn}
It is easier to think of pilings using the following geometric definition.
\begin{defn}(Geometric definition of pilings)\\
Start with $r$ vertical stacks, labelled $\sigma_{i}$ for each of the vertices $s_{i} \in V(\Gamma)$ ($1 \leq i \leq r$). Let $s^{\varepsilon_{i}}_{i} \in X$ ($\varepsilon_{i} = \pm 1$) be a letter of a word $w \in X^{\ast}$, for some $1 \leq i \leq r$. Then $s^{\varepsilon_{i}}_{i}$ is associated to a collection of beads, called a \emph{tile}, corresponding to one bead labelled $\sigma(\varepsilon_{i})$ on the $\sigma_{i}$-stack, and one bead labelled 0 on each $\sigma_{j}$-stack such that $1 \leq j \neq i \leq r$ and $[s_{i}, s_{j}] \neq 1$ in $A_{\Gamma}$. Each of these 0-beads is connected to the $\sigma(\varepsilon_{i})$-bead by a \emph{thread}. Note $0$-beads in a tile can commute with each other on a stack, but cannot commute with $+$ or $-$ beads.

When constructing $\pi^{\ast}(w)$, we add $+,-,0$ beads on stacks as given in \cref{defn:abstract piling}, based on the exponents of letters in $w$. If cancellation occurs, that is, $s^{\varepsilon_{i}}_{i}s^{-\varepsilon_{i}}_{i}$ occurs in $w$ up to commutation relations (for some $1 \leq i \leq r$), then we have consecutive beads $+-$ or $-+$ on the $\sigma_{i}$-stack, which cancel. When this occurs, we remove the pair of tiles associated to these beads, that is, we remove both the $+-$ or $-+$ beads, and their associated $0$ beads.
\end{defn}
\comm{
\begin{rmk}
    When considering stacks in a piling, we read beads from the bottom to the top of the piling.
\end{rmk}
See \cite[Example 2.4]{crisp_conjugacy_2009} for some examples of pilings.}
Pilings are an excellent geometric tool for RAAGs, in that any two words representing the same group element have the same piling associated to them. Note equivalent pilings refers to the beads on each stack, rather than how threads connect beads together. More formally, we say two pilings $\pi^{\ast}(u) = (\sigma_{1}, \dots, \sigma_{r}),$ $\pi^{\ast}(v) = (\tau_{1}, \dots, \tau_{r}) \in P$ are equal if and only if $\sigma_{i} = \tau_{i}$ for all $1 \leq i \leq r$.

\begin{prop}\cite{crisp_conjugacy_2009}\label{prop:shuffles pilings}
The map $\pi^{\ast}$ induces a well-defined map $\pi\colon \RAAG \rightarrow P$. In particular, for two words $u,v \in X^{\ast}$, then $u =_{\RAAG} v$ if and only if $\pi^{\ast}(u) = \pi^{\ast}(v)$. 
\end{prop}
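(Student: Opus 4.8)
The plan is to prove the two implications separately: the forward implication $u =_{\RAAG} v \Rightarrow \pi^{\ast}(u) = \pi^{\ast}(v)$ gives well-definedness of $\pi$, and the reverse implication gives injectivity on $\RAAG$.

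For the forward direction, the cleanest route is to recognise $\pi^{\ast}$ as a monoid homomorphism. First I would equip the set of pilings $P$ with a \emph{stacking} product: given $p, q \in P$, form $p \cdot q$ by appending the beads of $q$ on top of the corresponding stacks of $p$ and performing all resulting cancellations (a $+$ meeting a $-$, together with their threaded $0$-beads). I would check that this product is well defined --- the cancellations are confluent because the invariant built into case (3) of \cref{defn:abstract piling}, that whenever the top of a stack is a cancelling sign the tops of all non-commuting stacks are $0$, guarantees each sign-bead cancels together with a complete tile --- and that it is associative with the empty piling as identity, so $(P, \cdot)$ is a monoid. Directly from the left-to-right reading rule, reading $uv$ is the same as reading $u$ and then continuing to read $v$ on top, which says precisely that $\pi^{\ast}(uv) = \pi^{\ast}(u)\cdot \pi^{\ast}(v)$; thus $\pi^{\ast}\colon X^{\ast} \to P$ is a monoid homomorphism.

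It then remains to verify that the defining relations of $\RAAG$ hold in $P$. A short computation with the reading rule shows that both $\pi^{\ast}(s_i s_i^{-1})$ and $\pi^{\ast}(s_i^{-1} s_i)$ are the empty piling (the second bead cancels the tile just laid down), so each $\pi^{\ast}(s_i)$ is invertible; and for an edge $\{s_i, s_j\}\in E(\Gamma)$ one checks $\pi^{\ast}(s_i s_j) = \pi^{\ast}(s_j s_i)$, since $s_i$ and $s_j$ commute means neither tile places a bead on the other's sign-stack, while on any common non-commuting stack both contribute $0$-beads, which commute, so the two tuples coincide. By the universal property of $\RAAG$ as the group presented by $X$ subject to free reduction and these commutations, $\pi^{\ast}$ factors through a well-defined $\pi \colon \RAAG \to P$, giving the forward implication.

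For the reverse implication I would argue by induction on the number of beads that $\pi^{\ast}(u) = \pi^{\ast}(v)$ forces $u =_{\RAAG} v$. The base case is that $\pi^{\ast}(w)$ empty implies $w =_{\RAAG} 1$, since an empty piling records a complete sequence of commutations and free cancellations reducing $w$ to the empty word. For the inductive step, the key lemma is: if $p = \pi^{\ast}(w)$ is non-empty and $x \in X$ is a generator whose tile is \emph{minimal} in $p$ (i.e.\ it can be slid to the bottom of the piling past commuting tiles), then $w =_{\RAAG} x w'$ with $\pi^{\ast}(w') = p'$, where $p'$ is $p$ with that bottom tile deleted; moreover the set of admissible first letters $x$ and the resulting $p'$ depend only on $p$, not on $w$. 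Granting this, if $\pi^{\ast}(u) = \pi^{\ast}(v) = p$ I may extract the \emph{same} minimal generator $x$ from both, write $u =_{\RAAG} x u'$ and $v =_{\RAAG} x v'$ with $\pi^{\ast}(u') = \pi^{\ast}(v') = p'$, and conclude $u' =_{\RAAG} v'$ by induction, hence $u =_{\RAAG} v$.

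I expect the main obstacle to be this last lemma --- showing that a minimal generator can always be brought to the front of any representing word by commutations and free reductions, and that the residual piling is independent of the word. The delicate point is the bookkeeping of cancellations: the letter of $w$ actually surviving into the minimal tile need not be the literal first letter of $w$, so I must show every preceding letter either cancels within the prefix or commutes with $x$, which is exactly where the thread structure recording non-commutation is used. By comparison, establishing well-definedness and associativity of the stacking product in the forward direction is routine, and could alternatively be imported wholesale from Viennot's theory of heaps \cite{labelle_heaps_1986} as adapted in \cite{crisp_conjugacy_2009}.
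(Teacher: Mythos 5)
Your proposal is correct in substance, but it is organised differently from the paper's proof and in fact attempts strictly more. For the forward implication, the paper argues concretely: it recalls that any word reduces to a geodesic by free reductions and commutations, and that two geodesics for the same element of $\RAAG$ differ only by commutations, and then performs exactly your two computations --- $\pi^{\ast}\left(s_{i}^{\varepsilon}s_{i}^{-\varepsilon}\right)$ is the empty piling, and $\pi^{\ast}\left(s_{i}^{\varepsilon_{i}}s_{j}^{\varepsilon_{j}}\right) = \pi^{\ast}\left(s_{j}^{\varepsilon_{j}}s_{i}^{\varepsilon_{i}}\right)$ for edges, where it must treat all four sign choices since it never establishes invertibility. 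Your route through the stacking monoid and the universal property of the presentation buys two things: it avoids the RAAG-specific rewriting fact entirely, and invertibility lets you check only the positive commutation $\pi^{\ast}(s_{i}s_{j}) = \pi^{\ast}(s_{j}s_{i})$. The cost is that you must verify confluence and associativity of the stacking product, which you assert rather than prove; this is roughly the same implicit work as the paper's unstated use of concatenation when it applies its two-letter computations inside longer words, so the two directions are at comparable rigor. Where you genuinely depart from the paper is the reverse implication: the paper's proof does not address injectivity at all (it is inherited from the citation \cite{crisp_conjugacy_2009}), whereas you outline the standard heap-theoretic induction via extraction of a minimal tile, in the spirit of \cite{labelle_heaps_1986}. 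Your key lemma is true, and your sketch points at the right mechanism: a surviving letter that fails to commute with $x$ contributes a bead (its sign bead, or a threaded $0$-bead) to one of the stacks of $x$'s tile, which simultaneously prevents $x$'s tile from lying at the bottom and blocks any cancellation across $x$. But as written this remains a sketch; to close it one should maintain, by induction along the left-to-right reading of $w$ in \cref{defn:abstract piling}, the invariant that $w =_{\RAAG}$ the word spelled by the surviving letters in their original order --- both your base case and the extraction lemma then fall out. So: same computations with cleaner formal packaging in the forward direction, and in the reverse direction an argument the paper does not contain, correct in outline but with its central lemma left unproven.
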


\begin{proof}
    Recall any two geodesics representing the same element of $\RAAG$ are related by a finite number of commutation relations. It therefore remains to prove that removal of a free cancellation or applying a commutation relation does not change the image of a word under $\pi^{\ast}$. The first case is equivalent to showing that $\pi(s_{i}^{\varepsilon}s_{i}^{-\varepsilon}) = \pi(\epsilon)$ for all $1 \leq i \leq r$, where $s_{i} \in V(\Gamma),$ $\varepsilon = \pm 1$, and $\epsilon$ denotes the empty word. This is immediate by definition, since the piling $\pi(s_{i}^{\varepsilon}s_{i}^{-\varepsilon})$ consists of consecutive $+-$ or $-+$ beads on the $\sigma_{i}$-stack, which must cancel. 

    We now prove that for vertices $s_{i}, s_{j} \in V(\Gamma)$ such that $[s_{i}, s_{j}] = 1$ $(i \neq j)$ , we have $\pi\left(s_{i}^{\varepsilon_{i}}s_{j}^{\varepsilon_{j}}\right) = \pi\left(s_{j}^{\varepsilon_{j}}s_{i}^{\varepsilon_{i}}\right)$, where $\varepsilon_{i}, \varepsilon_{j} = \pm 1$. First suppose $(V(\Gamma) \setminus \mathrm{Lk}(s_{i})) \cap (V(\Gamma) \setminus \mathrm{Lk}(s_{j})) = \varnothing$. The piling $\pi(s^{\varepsilon_{i}}_{i})$ consists of a $\sigma(\varepsilon_{i})$-bead on the $\sigma_{i}$-stack, and 0-beads on every stack which does not lie in $\mathrm{St}(s_{i})$. Similarly the piling $\pi(s^{\varepsilon_{j}}_{j})$ consists of a $\sigma(\varepsilon_{j}$)-bead on the $\sigma_{j}$-stack, and 0-beads on every stack which does not lie in $\mathrm{St}(s_{j})$. Since $s_{i}, s_{j}, V(\Gamma) \setminus \mathrm{St}(s_{i})$ and $V(\Gamma) \setminus \mathrm{St}(s_{j})$ are all disjoint, the order in which we concatenate $\pi(s^{\varepsilon_{i}}_{i})$ and $\pi\left(s^{\varepsilon_{j}}_{j}\right)$ does not change the output piling. Otherwise, suppose there exists a $\sigma_{k}$-stack,
    such that $s_{k} \in V(\Gamma) \setminus (\mathrm{Lk}(s_{i}) \cup \mathrm
    {Lk}(s_{j}))$. Again the order of concatenation of pilings does not matter, since 0-beads can commute with each other in a piling (see \cref{fig:shuffles equiv pilings}). 
    \begin{figure}[h]
        \centering
        \begin{tikzpicture}
            \filldraw[black] (0,0) rectangle (3, 0.2);
            \draw[black] (0.5, 0.2) -- (0.5, 2);
            \node at (0.5, 2.2) {$s_{i}$};
            \node at (1.5, 2.2) {$s_{k}$};
            \node at (2.5, 2.2) {$s_{j}$};
            \draw[black] (1.5,0.2) -- (1.5,2) ;
            \draw[black] (2.5,0.2) -- (2.5,2) ;
            \filldraw[black] (5,0) rectangle (8, 0.2);
            \node at (5.5, 2.2) {$s_{i}$};
            \node at (6.5, 2.2) {$s_{k}$};
            \node at (7.5, 2.2) {$s_{j}$};
            \draw[black] (5.5,0.2) -- (5.5,2) ;
            \draw[black] (6.5,0.2) -- (6.5,2) ;
            \draw[black] (7.5,0.2) -- (7.5,2) ;
            \node at (4,1) {\textbf{=}};
            %LHS piling
            \filldraw[color = black, fill=red] (0.5, 0.5) circle (6pt);
            \draw[black] (0.3, 0.5) -- (0.7, 0.5);
            \draw[black] (0.5, 0.2) -- (0.5, 0.8);
            \filldraw[color = black, fill=white] (1.5, 0.5) circle (6pt);
            \filldraw[color = black, fill=red] (2.5, 0.5) circle (6pt);
            \filldraw[color = black, fill=white] (1.5, 1) circle (6pt);
            \draw[black] (2.3, 0.5) -- (2.7, 0.5);
            \draw[black] (2.5, 0.2) -- (2.5, 0.8);
            \draw[blue] (0.7, 0.5) -- (1.3, 0.5);
            \draw[blue] (1.7, 0.9) -- (2.3, 0.6);
            %RHS Piling
            \filldraw[color = black, fill=red] (5.5, 0.5) circle (6pt);
            \draw[black] (5.3, 0.5) -- (5.7, 0.5);
            \draw[black] (5.5, 0.2) -- (5.5, 0.8);
            \filldraw[color = black, fill=white] (6.5, 0.5) circle (6pt);
            \filldraw[color = black, fill=red] (7.5, 0.5) circle (6pt);
            \filldraw[color = black, fill=white] (6.5, 1) circle (6pt);
            \draw[black] (7.3, 0.5) -- (7.7, 0.5);
            \draw[black] (7.5, 0.2) -- (7.5, 0.8);
            \draw[blue] (5.7, 0.6) -- (6.3, 0.9);
            \draw[blue] (6.7, 0.5) -- (7.3, 0.5);
        \end{tikzpicture}
        \caption{Well-defined pilings under commutation}
        \label{fig:shuffles equiv pilings}
    \end{figure}
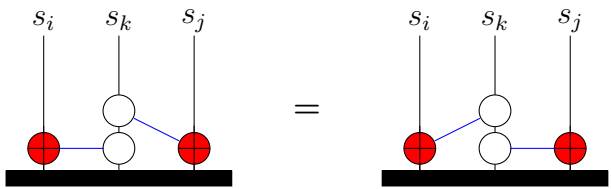
\end{proof}
We now summarise a normal form for pilings given in \cite[Definition 2.5]{crisp_conjugacy_2009}.
\begin{defn}
    Given an order $\leq$ on $X$, let $\leq^{-1}$ denote the \emph{inverse order}. In particular, $x \leq y$ if and only if $y \leq^{-1} x$. Let $\leq^{-1}_{\mathrm{SL}}$ denote the induced shortlex ordering of $X^{\ast}$ with respect to $\leq^{-1}$. We say a word $w \in X^{\ast}$ is \emph{normal} if for all $v \in X^{\ast}$ such that $w =_{\RAAG} v$, then $w \leq^{-1}_{\mathrm{SL}} v$. Any piling $p \in P$ has a unique normal geodesic which represents the group element defined by $p$. 
\end{defn}
\subsection{Conjugacy problem in RAAGs}\label{subsec:CP in RAAGs}
We provide a summary of the algorithm given in \cite{crisp_conjugacy_2009}, where the authors use pilings to find a linear time solution for the conjugacy problem in RAAGs. The first step is to establish a method of cyclically reducing pilings (recall \cref{defn:CR RAAGS}). 

\begin{defn}(Top and bottom tiles)\\
If an $\sigma_{i}$-stack starts with a bead $b \in \{+, -\}$, we define the \emph{bottom $s_{i}$-tile} to be the sub-piling formed by the first bead of the $\sigma_{i}$-stack, and the first beads of the $\sigma_{j}$-stacks such that $[s_{i}, s_{j}] \neq 1$ (which are all $0$-beads by definition). A \emph{top $s_{i}$-tile} is defined analogously for the end of a $\sigma_{i}$-stack.
\end{defn}

\begin{rmk}
    Our definition of top and bottom tiles coincides with the following definitions from \cite{Ferov2016}. For any $g \in \RAAG$, we define $\mathrm{FL}(g) \subseteq V(\Gamma)$ to be the set of all $v \in V(\Gamma)$ such that $g$ can be represented by a geodesic beginning (resp. ending) with $v^{\pm 1}$. Similarly we define $\mathrm{LL}(g) \subseteq V(\Gamma)$ to be the set of all $v \in V(\Gamma)$ such that $g$ can be represented by a geodesic ending with $v^{\pm 1}$. These sets $\mathrm{FL}(g)$ and $\mathrm{LL}(g)$ are precisely the sets of bottom and top tiles respectively of a piling $p \in P$ which represents $g$. 
\end{rmk}

\begin{defn}\label{defn:CR piling}\cite[Definition 2.12]{crisp_conjugacy_2009}
    Let $p \in P$. If an $\sigma_{i}$-stack starts with a $+$ bead and ends with a $-$ bead (or vice versa), then we define a \emph{cyclic reduction} of $p$ to be the removal of the top and bottom $s_{i}$-tiles. A piling $p \in P$ is \emph{cyclically reduced} if no cyclic reduction is possible.
\end{defn} 
See \cite[Figure 3]{crisp_conjugacy_2009} for an example of a cyclic reduction of a piling. The following is then immediate from these definitions.
\begin{cor}
    A reduced word $w \in X^{\ast}$ is cyclically reduced if and only if $\pi^{\ast}(w)$ is cyclically reduced.
\end{cor}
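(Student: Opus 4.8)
The plan is to prove the contrapositive in both directions, matching the word-level criterion recorded immediately after \cref{defn:CR RAAGS} against the piling criterion of \cref{defn:CR piling}. Concretely, I will show that a reduced word $w$ fails to be cyclically reduced --- equivalently, $w$ has a geodesic representative of the form $x_{1}s_{i}^{\varepsilon}x_{2}s_{i}^{-\varepsilon}x_{3}$ with every letter of $x_{1}$ and $x_{3}$ commuting with $s_{i}$ --- if and only if $\pi^{\ast}(w)$ admits a cyclic reduction, i.e.\ some $\sigma_{i}$-stack begins with $\sigma(\varepsilon)$ and ends with $\sigma(-\varepsilon)$. Throughout I use that, since $w$ is reduced, $\pi^{\ast}(w)$ contains no cancelling $\pm$-pair, so the number of $\pm$-beads equals $l(w)$ and each $\pm$-bead carries a well-defined sign.

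The bridge between the two criteria is a sign-refined version of the Remark following \cref{defn:CR piling}: for a geodesic $w$ representing $g \in \RAAG$, the first bead of the $\sigma_{i}$-stack is $\sigma(\varepsilon)$ if and only if $g$ has a geodesic representative beginning with $s_{i}^{\varepsilon}$, and dually the last bead is $\sigma(\delta)$ if and only if $g$ has a geodesic representative ending with $s_{i}^{\delta}$. One direction is the construction itself: reading such a representative left to right places $\sigma(\varepsilon)$ as the first bead of the $\sigma_{i}$-stack, and by \cref{prop:shuffles pilings} this is the piling of $g$, while no cancellation can later erase that bead because $w$ is geodesic. The converse is peeling: if the $\sigma_{i}$-stack begins with $\sigma(\varepsilon)$ the bottom $s_{i}$-tile may be removed, yielding a piling of some $h$ with $g = s_{i}^{\varepsilon}h$ and $l(g) = l(h)+1$, so prefixing the normal geodesic of that piling by $s_{i}^{\varepsilon}$ gives a geodesic for $g$ beginning with $s_{i}^{\varepsilon}$; the statement for the last bead is symmetric.

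Granting this, both failures of cyclic reduction collapse to the single assertion that there exist $i$ and $\varepsilon$ such that $g$ has a geodesic beginning with $s_{i}^{\varepsilon}$ and a geodesic ending with $s_{i}^{-\varepsilon}$. On the word side, if $w =_{\RAAG} x_{1}s_{i}^{\varepsilon}x_{2}s_{i}^{-\varepsilon}x_{3}$ with $x_{1},x_{3}$ commuting with $s_{i}$, I commute $s_{i}^{\varepsilon}$ to the front and $s_{i}^{-\varepsilon}$ to the back to produce exactly such a pair of geodesics; conversely, such a pair taken with $x_{1}$ and $x_{3}$ empty is already of the forbidden form. On the piling side the assertion is immediate from the bridging lemma. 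The one point that still needs care is the simultaneous extraction producing $s_{i}^{\varepsilon}u\,s_{i}^{-\varepsilon}$: after peeling the bottom $s_{i}$-tile (sign $\varepsilon$), the top bead of the $\sigma_{i}$-stack has the opposite sign $-\varepsilon$ and is therefore a \emph{distinct} bead, untouched by removing the bottom tile, so the reduced piling still ends in $\sigma(-\varepsilon)$ and its top $s_{i}$-tile may be peeled as well; letting $u$ be the normal geodesic of what remains yields the geodesic $s_{i}^{\varepsilon}u\,s_{i}^{-\varepsilon}$, which is of the forbidden form.

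The main obstacle is the bookkeeping with signs rather than vertices: the Remark records $\mathrm{FL}(g)$ and $\mathrm{LL}(g)$ only as vertex sets, so I must upgrade it to track whether each extremal bead is $+$ or $-$, and must check that the front-extractable and back-extractable occurrences of $s_{i}^{\pm}$ are genuinely different beads. This is precisely where opposite signs enter: $\sigma(\varepsilon) \neq \sigma(-\varepsilon)$ forces the first and last beads of the $\sigma_{i}$-stack to be distinct, guaranteeing at least two $\pm$-beads and excluding the degenerate case in which a single bead would have to serve as both the bottom and the top tile. Once this sign-tracking is in place, the two notions of cyclic reduction coincide and the corollary follows.
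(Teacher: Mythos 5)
Your proof is correct and follows the route the paper intends: the paper states this corollary with no proof at all (it is declared ``immediate from these definitions''), and your argument is precisely the natural elaboration, matching the word-level forbidden form $x_{1}s_{i}^{\varepsilon}x_{2}s_{i}^{-\varepsilon}x_{3}$ against the piling criterion through a sign-refined version of the $\mathrm{FL}/\mathrm{LL}$ remark on bottom and top tiles. The extra ingredients you supply --- that geodesic words build pilings without cancellation, and that peeling a bottom (resp.\ top) $s_{i}$-tile produces a geodesic beginning (resp.\ ending) with the correspondingly signed letter --- are standard facts from \cite{crisp_conjugacy_2009} that the paper itself relies on implicitly, so your write-up is a faithful filling-in of the omitted details rather than a different argument.
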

The next step of the algorithm involves considering the defining graph. Again we remind the reader that our convention for the defining graph is the opposite to that of \cite{crisp_conjugacy_2009}.

\begin{defn}\cite[Definition 2.13]{crisp_conjugacy_2009}
    Let $w \in X^{\ast}$ be geodesic and let $\pi^{\ast}(w) = p \in P$. Let $\Gamma^{c}$ denote the complement of the defining graph $\Gamma$. Consider the full subgraph $\Delta(p)$ of $\Gamma^{c}$, defined as the induced subgraph of $\Gamma^{c}$ whose vertices corresponds to $\sigma_{i}$-stacks in $p$ which contain at least one $+$ or $-$ bead, i.e. the subgraph induced by the support of $w$. We say $w$ and $p$ are \emph{non-split} if $\Delta(p)$ is connected.
\end{defn}
\cite[Remark 2.14]{crisp_conjugacy_2009} highlights that the conjugacy problem can be reduced to considering cyclically reduced non-split pilings. This is also observed in \cite[Proposition 5.7]{borovik_divisibility_2005}.

\begin{prop}
    Let $w = w_{1}\dots w_{k} \in X^{\ast}$ be a factorisation such that each induced subgraph $\Delta(w_{i})$ is a connected component of $\Gamma^{c}$, that is, each $w_{i}$ is a non-split word. Then $w$ is cyclically reduced if and only if each $w_{i}$ is cyclically reduced. 

    Moreover, if $v = v_{1}\dots v_{l}$ is cyclically reduced and factorised into non-split words, then $w \sim v$ if and only if $k=l$ and, after index re-enumeration, $w_{i} \sim v_{i}$ for all $1 \leq i \leq k$. 
\end{prop}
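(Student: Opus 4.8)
The plan is to reduce everything to the direct product decomposition of $\RAAG$ induced by the connected components of $\Gamma^{c}$. Let $C_{1}, \dots, C_{N}$ be the connected components of $\Gamma^{c}$. Two vertices lie in distinct components of $\Gamma^{c}$ precisely when they are joined by an edge of $\Gamma$, so any generator in $C_{i}$ commutes with any generator in $C_{j}$ for $i \neq j$, and hence
\[ \RAAG \cong A_{C_{1}} \times \dots \times A_{C_{N}}, \]
where $A_{C_{j}}$ is the sub-RAAG on the vertices of $C_{j}$. Under this isomorphism each non-split factor $w_{i}$ (whose support $\Delta(w_{i})$ is a single component of $\Gamma^{c}$) lies entirely in one direct factor, with distinct factors for distinct $i$; in particular the $w_{i}$ pairwise commute, the order of the factorisation is irrelevant, and $w$ is identified with the tuple whose nontrivial coordinates are exactly $w_{1}, \dots, w_{k}$. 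Equivalently, in the language of pilings, a $0$-bead is placed on a $\sigma_{j}$-stack only when a non-commuting letter is read, and non-commuting vertices lie in the same component of $\Gamma^{c}$; hence by \cref{prop:shuffles pilings} the piling $\pi^{\ast}(w)$ splits as a disjoint union of the pilings $\pi^{\ast}(w_{i})$ on pairwise disjoint blocks of stacks, one block per component.

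For the first claim I would argue that cyclic reduction is local to a single component. A cyclic reduction of $\pi^{\ast}(w)$ removes a top and a bottom $s_{j}$-tile; by \cref{defn:CR piling} this involves only the $\sigma_{j}$-stack together with the $0$-beads on stacks of non-commuting generators, all of which lie in the same block as $s_{j}$. Thus every cyclic reduction of $w$ takes place inside exactly one factor $w_{i}$ and is a cyclic reduction of that $w_{i}$, and conversely. Therefore no cyclic reduction of $w$ is possible if and only if no cyclic reduction of any $w_{i}$ is possible, which is precisely the statement that $w$ is cyclically reduced if and only if each $w_{i}$ is.

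For the conjugacy claim, the backward direction is immediate: if $k = l$ and $w_{i} \sim v_{i}$ for each $i$ after reindexing, then conjugating each factor inside its own commuting block (and fixing the others) yields $w \sim v$. For the forward direction I would invoke the standard conjugacy criterion for direct products, namely that in $G_{1} \times \dots \times G_{N}$ two elements are conjugate if and only if their coordinates are conjugate in each $G_{j}$. Applying this to the tuples representing $w$ and $v$ shows that $w \sim v$ forces each coordinate of $w$ to be conjugate to the corresponding coordinate of $v$ in $A_{C_{j}}$. Since RAAGs are torsion-free, the only element conjugate to the identity is the identity, so the components on which $w$ is nontrivial coincide with those on which $v$ is nontrivial; this forces $k = l$ and, after matching components, $w_{i} \sim v_{i}$ for all $i$. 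As conjugacy within the factor $A_{C_{j}}$ and conjugacy in $\RAAG$ agree for elements supported in $C_{j}$ (again by the product criterion), this is exactly the asserted conclusion.

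The hard part is the forward direction of the conjugacy claim: one must rule out that a conjugating element mixes the commuting blocks, and confirm that the non-split factorisation is canonical, so that the re-enumeration is meaningful. Both points are handled by the direct product conjugacy criterion together with torsion-freeness of RAAGs, since the factor in component $C_{j}$ is simply the projection of $w$ onto $A_{C_{j}}$ and is therefore well-defined up to reordering. The remaining work is the routine verification that the piling decomposition along components is faithful, which follows from \cref{prop:shuffles pilings} restricted to each block of stacks.
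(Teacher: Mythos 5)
Your proof is correct, but there is nothing in the paper to compare it against: the proposition is stated without proof, as a recollection of \cite[Remark 2.14]{crisp_conjugacy_2009} and \cite[Proposition 5.7]{borovik_divisibility_2005}. Your route makes the underlying structure explicit in a slightly different way from those sources: rather than arguing purely with commuting subwords and pilings, you identify $\RAAG$ with the direct product $A_{C_{1}} \times \dots \times A_{C_{N}}$ over the connected components of $\Gamma^{c}$, check that pilings (and hence cyclic reductions, which by \cref{defn:CR piling} involve only a generator's stack and the stacks of its $\Gamma^{c}$-neighbours) decompose block-by-block, and then quote the componentwise conjugacy criterion for direct products. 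This yields a short, self-contained algebraic proof of the ``moreover'' clause, where the cited sources phrase the reduction combinatorially; the content is essentially the same observation. Two details deserve attention. First, torsion-freeness of RAAGs is a red herring: in any group whatsoever the conjugacy class of the identity is $\{1\}$, so a nontrivial coordinate can never be conjugate to a trivial one; no special property of RAAGs is needed at that step. Second, your argument silently assumes that distinct factors $w_{i}$, $w_{j}$ are supported on distinct components of $\Gamma^{c}$. This is the intended reading of the hypothesis, and it is genuinely needed: if two factors were allowed to share a component, the statement would fail --- in $F_{2} = \langle a,b \rangle$ (so $\Gamma^{c}$ is a single edge) take $w = w_{1}w_{2}$ with $w_{1} = w_{2} = ab$ and $v = v_{1} = abab$; then $w \sim v$ but $k = 2 \neq 1 = l$. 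Relatedly, you use (and should state) that each $w_{i} \neq 1$ because its support is a nonempty full component; this is exactly what makes the matching of components, and hence the equality $k = l$, go through.
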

The next step is to construct a type of piling, known as a pyramidal piling, such that when we take our normal form, then any cyclic permutations of this normal form are also normal. 

\begin{defn}\cite[Definition 2.15]{crisp_conjugacy_2009}
    Let $p \in P$ be a non-empty piling. Let $i \in \{1, \dots, r\}$ denote the largest index such that the $\sigma_{i}$-stack in $p$ contains a $+$ or $-$ bead. We say $p$ is \emph{pyramidal} if the first bead of the $\sigma_{i}$-stack is a $+$ or $-$ bead, and every other stack in $p$ is either empty or starts with a 0 bead. 
\end{defn}
\begin{cor}
    Any pyramidal piling must be non-split.
\end{cor}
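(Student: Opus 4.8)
The plan is to argue by contradiction: assume a pyramidal piling $p$ is split, meaning the subgraph $\Delta(p)$ of $\Gamma^{c}$ induced by the support of $p$ is disconnected, and derive a contradiction with pyramidality. Let $i$ be the largest index whose $\sigma_{i}$-stack contains a $+$ or $-$ bead, as in the definition of pyramidal, and let $C$ be the connected component of $\Delta(p)$ containing $s_{i}$. Since $\Delta(p)$ is assumed disconnected, there is a second nonempty component; call $D$ the vertex set of some component other than $C$. The first thing to record is that vertices lying in distinct components of $\Delta(p) \subseteq \Gamma^{c}$ are non-adjacent in $\Gamma^{c}$, hence adjacent in $\Gamma$, hence commute in $\RAAG$; in particular every generator in $D$ commutes with every support generator outside $D$.

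The key step is to show that the $\sigma_{j}$-stacks with $s_{j} \in D$ form a self-contained sub-piling, receiving beads only from letters in $D$. First I would check that any $0$-bead sitting on a $\sigma_{j}$-stack with $s_{j} \in D$ is, by construction, threaded to a \emph{surviving} $\pm$ bead on some $\sigma_{k}$-stack with $[s_{j},s_{k}] \neq 1$ in $\RAAG$; surviving because the cancellation rule always removes a $\pm$ bead together with its associated $0$-beads. Thus $s_{k}$ lies in the support, and since $\{s_{j},s_{k}\}$ is an edge of $\Gamma^{c}$ with $s_{j} \in D$, we get $s_{k} \in D$. Consequently every tile touching a $D$-stack is centred on a $D$-stack, so restricting $p$ to the $D$-stacks yields a genuine nonempty reduced piling, with the same bottom-to-top bead orderings as in $p$.

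Now I would invoke pyramidality: since $s_{i} \in C$ and $D \neq C$, every $s_{j} \in D$ satisfies $s_{j} \neq s_{i}$, so every nonempty $D$-stack starts with a $0$ bead. This contradicts the elementary fact that a nonempty reduced piling must have at least one stack beginning with a $\pm$ bead. Concretely, I would take a geodesic word $u$ representing the restricted sub-piling; being reduced, no cancellation occurs when forming $\pi^{\ast}(u)$, so the first letter of $u$ deposits a $\pm$ bead at the bottom of its own stack, which then survives, forcing that $D$-stack to start with a $\pm$ bead. (Equivalently, a minimal element of the heap order on the $D$-tiles is centred on a stack whose bottom bead is its own $\pm$ bead.) This contradiction shows $\Delta(p)$ is connected, i.e.\ $p$ is non-split. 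I expect the main obstacle to be the bookkeeping in the self-containedness step: one must argue carefully that $0$-beads on $D$-stacks cannot originate from letters outside $D$, which relies precisely on the surviving-$\pm$-bead property of the cancellation rule together with $D$ being closed under $\Gamma^{c}$-adjacency within the support.
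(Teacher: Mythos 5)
Your proof is correct. The paper states this corollary without proof, treating it as immediate from the definitions, and your argument is precisely the reasoning being left implicit: each connected component of the support contributes at least one stack whose bottom bead is $+$ or $-$ (via a minimal tile of that component's self-contained sub-piling), while pyramidality allows only the single stack $\sigma_{i}$ to start with a $\pm$ bead. Your careful verification that $0$-beads on $D$-stacks can only come from tiles centred in $D$ (because cancellation removes whole tiles) is exactly the bookkeeping needed to make the "immediate" claim rigorous.
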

\begin{defn}\label{defn:piling cyclic perm}
    Let $p \in P$ be a non-empty piling. A \emph{cyclic permutation} of $p$ is defined as the operation of removing a bottom (resp. top) $s_{i}$-tile from $p$, and adding a top (resp. bottom) $s_{i}$-tile to $p$.
\end{defn}
 
\begin{prop}\cite[Proposition 2.18]{crisp_conjugacy_2009}
    Any non-split cyclically reduced piling can be transformed into a pyramidal piling  via a finite sequence of cyclic permutations.
\end{prop}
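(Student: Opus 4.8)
The plan is to work in the heap-of-tiles picture and argue by induction, exploiting the fact that a cyclic permutation removes a bottom tile and re-attaches the same tile at the top (or vice versa): it amounts to conjugating the underlying element by a single generator, and, crucially, since $p$ is cyclically reduced, no such move can create a free cancellation, so the total number of tiles stays fixed throughout. I would first restate the target concretely. Writing $s_i$ for the largest-index generator in the support, the remark after \cref{defn:CR piling} identifies $\mathrm{FL}(g)$ with the set of generators whose stack begins with a $+$ or $-$ bead, and a piling is pyramidal exactly when $\sigma_i$ is the \emph{only} such stack, i.e.\ when the lowest $s_i$-tile $B$ is the unique bottom tile. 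So the goal is to reach, by cyclic permutations, a configuration in which $B$ is the sole bottom tile.

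I would decrease a complexity measure in two stages. First, while $\sigma_i$ does not begin with a $+/-$ bead, its first bead is a $0$, which belongs to some tile $T$ whose generator does not commute with $s_i$; this $T$ shares the $\sigma_i$-stack with $B$ and lies below it, so $T$ is comparable to and below $B$. Taking a bottom tile among those lying below $B$ and moving it to the top sends it strictly above $B$ (they share a stack and their generators fail to commute), which strictly reduces the number of tiles below $B$; after finitely many such moves $B$ is a bottom tile. Second, with $B$ minimal I would remove every remaining bottom tile $T_0\neq B$. Such a $T_0$ has generator commuting with $s_i$ (two bottom tiles cannot share a stack), and here \emph{non-split} is exactly what is needed: since $\Delta(p)\subseteq\Gamma^{c}$ is connected, the generator of $T_0$ has a $\Gamma^{c}$-neighbour still present in the support, so after $T_0$ is moved to the top a non-commuting letter survives below it and the corresponding stack begins with a $0$-bead, deleting $T_0$ from $\mathrm{FL}$ while leaving $B$ undisturbed at the bottom.

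The step I expect to be the main obstacle is termination of the second stage: moving one bottom tile to the top can expose new bottom tiles, so the raw count of bottom tiles need not drop monotonically. The resolution is to route each offending generator toward $s_i$ along a shortest path in the connected graph $\Delta(p)$, and to take as monovariant the sum, over bottom tiles $T_0\neq B$, of the $\Delta(p)$-distance from the generator of $T_0$ to $s_i$, refined by a secondary count of tiles not lying above $B$. Each carefully chosen permutation should push an offending tile one step closer to $s_i$, so that it eventually comes to rest above a tile with which it shares a common non-commuting neighbour, and hence above $B$. The technical heart is verifying that this quantity strictly decreases at every step and that cyclic-reducedness forbids any new cancellation that could reset it; once the monovariant bottoms out, $B$ is the unique bottom tile and $p$ is pyramidal, which completes the induction.
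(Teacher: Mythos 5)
This proposition is not proved in the paper you were given: it is quoted verbatim from Crisp--Godelle--Wiest \cite[Proposition 2.18]{crisp_conjugacy_2009}, so your proposal has to stand on its own, and it does not quite do so. Your Stage 1 reaches the right conclusion but by a false justification: a bottom tile $T''$ lying below $B$ in the heap order need \emph{not} share a stack with $B$, since ``below'' is the transitive closure of the covering relation. Concretely, if $\Gamma^{c}$ is the path $x - y - z$ (so $[x,z]=1$) with $z$ the largest generator, then in the piling of $xyz$ the $x$-tile is a bottom tile lying below $B$ (the $z$-tile), but moving it to the top gives $yzx =_{\RAAG} yxz$, where the $x$-tile is \emph{incomparable} to $B$, not above it. What actually makes Stage 1 work is weaker: the permuted tile ends up maximal, hence no longer below $B$, and the heap relations among the untouched tiles are unchanged, so the number of tiles below $B$ still drops by one. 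With that repair, Stage 1 terminates with $B$ a bottom tile.

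The genuine gap is Stage 2, which you yourself flag as ``the technical heart'' and then do not carry out. The proposed monovariant --- the sum over bottom tiles $T_0 \neq B$ of the $\Delta(p)$-distance from $\mathrm{gen}(T_0)$ to $s_i$, refined by the count of tiles not above $B$ --- is never shown to decrease, and as described it need not: moving one bottom tile to the top deletes its summand but can expose several new bottom tiles whose distances total more, while the secondary count is merely non-increasing. Moreover, ``push an offending tile one step closer to $s_i$'' is not a meaningful operation: a cyclic permutation never changes the generator of a tile, hence never changes its distance to $s_i$; what changes is \emph{which} tiles are bottom tiles, and that is precisely what you cannot control. (A smaller error of the same kind: after $T_0$ is moved up, the stack of $\mathrm{gen}(T_0)$ may begin with a $\pm$ bead of another tile of the same generator rather than a $0$-bead, so $\mathrm{gen}(T_0)$ need not leave $\mathrm{FL}$.) The missing idea is to use that the set of ``bad'' tiles (those not above $B$) is \emph{downward closed}: if a tile lies below a bad tile it is bad. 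Consequently, writing $W$ for the set of generators carrying at least one good tile (note $s_i \in W$), connectivity of $\Delta(p)$ produces a bad tile $S$ and a good tile $S'$ whose generators are equal or non-commuting, forcing $S$ and $S'$ to be comparable, hence $S < S'$. Now run a nested induction: while $S$ is not itself a bottom tile, move up bottom tiles lying below $S$ (all bad, by downward closure); the number of tiles below $S$ strictly decreases and the pair $S < S'$ is undisturbed. Once $S$ is a bottom tile, move it to the top, where it necessarily lands above $S' \geq B$, so the bad count strictly decreases. Iterating this, not the unrestricted shuffling of bad bottom tiles, is what terminates; your process as written is not shown to terminate at all.
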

The final step in ensuring a linear time algorithm comes from defining a normal form called a \emph{cyclic normal form}.

\begin{defn}\cite[Definition 2.19]{crisp_conjugacy_2009}
    Let $w \in X^{\ast}$ be reduced and cyclically reduced. We say $w$ is a \emph{cyclic normal form} if it is normal and all its cyclic permutations are also normal.
\end{defn}
\begin{prop}\label{prop: 2.20 RAAGs}\cite[Proposition 2.20]{crisp_conjugacy_2009}
    Let $p \in P$ be a non-split cyclically reduced pyramidal piling. Let $w \in X^{\ast}$ be the unique normal reduced word which represents the group element defined by $p$. Then $w$ is a cyclic normal form.
\end{prop}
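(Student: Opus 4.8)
The plan is to use that $w$ is \emph{already} normal (by its defining property) and cyclically reduced (since $p$ is cyclically reduced, by the corollary preceding this proposition), so that the only remaining clause in the definition of a cyclic normal form is that every word-level cyclic permutation $w_{(k)} = x_{k+1}\cdots x_n x_1 \cdots x_k$ of $w = x_1 \cdots x_n$ is again normal. I would first extract from the pyramidal hypothesis the precise form of the first letter: since $i$ is the largest index carrying a $+$ or $-$ bead and every other stack either is empty or starts with a $0$ bead, the bottom $s_i$-tile is readable and is the unique readable bottom tile whose $\pm$-bead generator is $\leq^{-1}$-minimal. Hence $x_1 = s_i^{\varepsilon}$, where $s_i$ is the highest-index generator occurring in $w$, i.e.\ the $\leq^{-1}$-least generator of the support.

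The main tool is the greedy characterization of normality: a geodesic word is normal (shortlex-least for $\leq^{-1}$) exactly when it equals its greedy left-to-right reading, at each step appending the $\leq^{-1}$-least letter of the remaining suffix that can be shuffled to the front by commutations. Equivalently, $w_{(k)}$ fails to be normal iff it admits a \emph{descent}: indices $a<b$ with the letter in position $b$ shuffleable leftward past the letters in positions $a+1,\dots,b-1$, commuting with the letter in position $a$, and strictly $\leq^{-1}$-smaller than it. I would classify any descent of $w_{(k)}$ according to whether both endpoints lie in the tail block $x_{k+1}\cdots x_n$, both lie in the head block $x_1\cdots x_k$, or it straddles the two. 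A descent internal to either block is literally a descent inside a subword of $w$, which contradicts normality of $w$; so the whole problem reduces to excluding \emph{straddling} descents.

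Analyzing the straddling case is the crux. Such a descent forces a head letter $x_m$ ($m\le k$) to shuffle leftward across an entire tail-suffix $x_{k+P}\cdots x_n$ together with the head-prefix $x_1\cdots x_{m-1}$, landing in front of some $x_{k+P}$ with $x_m <^{-1} x_{k+P}$. The reason this can appear in $w_{(k)}$ but not in $w$ is that in $w$ the letter $x_m$ is held in place by a non-commuting blocking letter $x_c$ with $m<c<k+P$, and the rotation has moved $x_c$ out from between the two endpoints (either behind $x_m$ in the head, or into the already-read portion of the tail). I would rule this out using the remaining hypotheses: cyclic reducedness forces the top and bottom beads of the $\sigma_i$-stack to have the same sign (they are not an inverse pair), so the maximal generator cannot wrap around and cancel; and the non-split hypothesis — that $\Delta(p)$ is connected in $\Gamma^{c}$ — guarantees the support admits no separation into mutually commuting pieces, so every generator is joined in $\Gamma^{c}$ to another support generator that must survive, in each rotation, among $\{x_{k+P},\dots,x_n\}\cup\{x_1,\dots,x_{m-1}\}$, furnishing the required blocker and destroying the putative descent. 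The cleanest organization is probably to treat the maximal generator first, where $x_1 = s_i^{\varepsilon}$ is $\leq^{-1}$-minimal and $s_i$ never commutes with itself, so a single occurrence of $s_i^{\pm}$ in the tail-suffix (or non-split connectivity, when $s_i$ would otherwise commute with the whole tail) already blocks it.

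The hardest step is exactly this last one: converting the global connectivity of $\Delta(p)$ into the local statement that, for \emph{every} rotation index $k$ and \emph{every} candidate head letter, a non-commuting blocker remains strictly between the two descent endpoints. I expect the routine bookkeeping (geodesy and internal-descent cases) to be easy, while the delicate part is making the blocker argument uniform across all candidate letters — most plausibly by an induction running down the $\leq^{-1}$-order of the generators, starting from $s_i$ and using non-split connectivity to supply a non-commuting neighbour at each stage, with the pyramidal normal form controlling where first occurrences of each generator sit. Pinning down this induction, rather than the descent classification, is where the real work lies.
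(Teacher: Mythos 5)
This proposition is not proved in the paper at all: it is imported verbatim from \cite[Proposition 2.20]{crisp_conjugacy_2009}, so there is no in-paper argument to compare yours against and the attempt must be judged on its own. Your skeleton is sound: rotations of $w$ stay geodesic because $p$ is cyclically reduced, normality of a geodesic is equivalent to the absence of a descent, and a descent internal to the head block or the tail block is a descent of a contiguous factor of $w$, hence excluded by normality of $w$. The gap is the straddling case, which you yourself leave as an unspecified induction; and the mechanism you propose for it cannot be made to work. The blocker you need must lie among exactly the positions $\{x_{k+P},\dots,x_n\}\cup\{x_1,\dots,x_{m-1}\}$ that the descending letter is required to commute past, and connectivity of $\Delta(p)$ in $\Gamma^{c}$ gives no control whatsoever over \emph{where} in the word the $\Gamma^{c}$-neighbours of a given generator sit.

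In fact no completion of your argument exists, because with the definitions exactly as this paper states them the proposition is false. Take $A_{\Gamma} = \langle a,b,c,d \mid [a,c]=[a,d]=[b,d]=1 \rangle$ with $a=s_{1},b=s_{2},c=s_{3},d=s_{4}$, so that $\Gamma^{c}$ is the path $a-b-c-d$, and let $w = dcba$. Consecutive letters of $w$ do not commute, so $w$ is the unique geodesic of its element and hence normal; its piling $(\sigma_{a},\sigma_{b},\sigma_{c},\sigma_{d}) = (0+,\;0+0,\;0+0,\;+0)$ is pyramidal (only the highest-index stack $\sigma_{d}$ begins with a $\pm$ bead), cyclically reduced, and non-split. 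Yet the cyclic permutation $cbad$ is not normal: $cbad =_{A_{\Gamma}} cdba$ (swap $d$ past $a$, then past $b$) and $d <^{-1} b$, so $cdba <^{-1}_{\mathrm{SL}} cbad$. This is precisely a straddling descent, with $m=1$, in which \emph{no} blocker is available even though $\Delta(p)$ is connected — so your key claim fails, not for lack of cleverness but because the statement it is meant to prove is false as written. The defect is a transcription error in the paper's conventions: with ``normal'' meaning $\leq^{-1}_{\mathrm{SL}}$-least (preferring high-index letters) and ``pyramidal'' placing the \emph{highest}-index stack at the bottom, the forced first letter of $w$ is the $\leq^{-1}$-least letter of the support, whereas the cyclic-normal-form property requires it to be the $\leq^{-1}$-greatest; in \cite{crisp_conjugacy_2009} the two definitions are aligned.

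It is worth recording that once the conventions are aligned (say, pyramidal singles out the \emph{lowest}-index stack), the straddling case closes at once, with no connectivity induction. In a straddling descent the head letter $x_{m}$ commutes with the entire head prefix $x_{1}\cdots x_{m-1}$, so its generator is a possible first letter of the element; pyramidality says there is exactly one possible first letter, namely $s_{i}^{\varepsilon}$, so $x_{m} = s_{i}^{\varepsilon}$. But $s_{i}^{\varepsilon}$ is then $\leq^{-1}$-maximal among the letters of the support (up to the one exception $x_{k+P} = s_{i}$ when $\varepsilon = -1$, which is excluded because $s_{i}$ does not commute with $s_{i}^{-1}$), so the strict inequality $x_{m} <^{-1} x_{k+P}$ demanded by the descent is impossible. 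Note that non-splitness is never used: as the paper itself records, pyramidal already implies non-split. So the real key is that pyramidality makes the set of first letters a singleton and that this forced letter is order-extremal — not the blocker bookkeeping your proposal is organised around.
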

\begin{prop}\label{prop: 2.21 RAAGs}\cite[Proposition 2.21]{crisp_conjugacy_2009}
    Two cyclic normal forms represent conjugate elements if and only if they are equal up to a cyclic permutation. 
\end{prop}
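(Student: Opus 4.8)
The statement has two directions, and the reverse implication is the routine one. If $v$ is a cyclic permutation of $u$, say $u = x_1 \cdots x_n$ and $v = x_{i+1}\cdots x_n x_1 \cdots x_i$, then setting $g = x_1 \cdots x_i$ gives $v =_{\RAAG} g^{-1}ug$, so $u \sim v$. The same holds at the level of pilings, since a cyclic permutation of a piling (\cref{defn:piling cyclic perm}) replaces a group element by its conjugate under a single generator. The substance of the proposition is therefore the forward implication, and this is where I would concentrate the work.

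For the forward direction, suppose $u$ and $v$ are cyclic normal forms with $u \sim v$. Both are cyclically reduced (\cref{defn:CR RAAGS}), and one first checks that a cyclic normal form must be non-split: if its piling split into two components with entirely commuting supports, the inverse shortlex order would force the normal form to interleave two commuting letters in the wrong order after some rotation, so a cyclic permutation would fail to be normal, contradicting the defining property. Having reduced to the non-split case, I would invoke the conjugacy criterion for RAAGs that underlies the whole algorithm, namely that two non-split cyclically reduced elements are conjugate if and only if their pilings lie in a single orbit under the cyclic permutation of pilings (this is the content of \cite[Proposition 2.18]{crisp_conjugacy_2009} together with the non-split factorisation result preceding it). Thus $\pi^{\ast}(u)$ and $\pi^{\ast}(v)$ are joined by a finite sequence of piling cyclic permutations.

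The heart of the argument is to convert this sequence of \emph{piling} cyclic permutations into a \emph{word} cyclic permutation of $u$. One half is a clean bridge observation: if $w = x_1 \cdots x_n$ is a cyclic normal form, then moving the bottom tile corresponding to its genuine first letter $x_1$ to the top yields the piling of the word $x_2 \cdots x_n x_1$, and since $w$ is a cyclic normal form this word is already normal, hence is the normal form of the new piling. Iterating, the word cyclic permutations of $u$ form a cycle of cyclic normal forms inside the orbit. The obstacle is the converse containment, and this is the step I expect to be hardest: a piling cyclic permutation may move \emph{any} bottom tile, including a generator that is shufflable to the front but is not the literal first letter of the normal form, and such a move can leave the set of cyclic normal forms entirely. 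Already a three-letter example (with $a,c$ commuting, $b$ non-commuting, taking $u = cab$) produces a conjugate piling whose normal form is neither a cyclic permutation of $u$ nor itself a cyclic normal form, so piling cyclic permutations are strictly more plentiful than word cyclic permutations and the naive correspondence fails.

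To close this gap I would anchor the orbit at the pyramidal piling supplied by \cite[Proposition 2.18]{crisp_conjugacy_2009}, whose normal form $w_0$ is a cyclic normal form by \cref{prop: 2.20 RAAGs}, and prove the rigidity statement that every cyclic normal form in the orbit is a word cyclic permutation of $w_0$. The key technical point is that whenever a bottom move departs from the first-letter cycle it produces a word that is not shortlex-minimal among its cyclic permutations, so the only conjugate pilings whose normal forms are genuine cyclic normal forms are exactly those reachable by successive first-letter moves, that is, the word cyclic permutations. Making this precise requires carefully tracking how the inverse shortlex order interacts with moving a shufflable interior tile, and this is the technical core of the proof. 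Once it is established, applying it to both $u$ and $v$ forces $v$ to be a cyclic permutation of $u$, completing the forward direction; the equivalence then yields the linear-time test by the cyclic-permutation pattern matching noted in \cref{rmk:RAM machines}.
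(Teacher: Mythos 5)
You should note first that this paper contains no proof of this proposition at all: it is imported verbatim, with a citation, from \cite[Proposition 2.21]{crisp_conjugacy_2009}, so your attempt has to be judged on its own merits rather than against an argument in the text. On those merits, much of your scaffolding is sound: the reverse direction is indeed routine; rotations of a cyclic normal form are again cyclic normal forms, so word cyclic permutations stay inside the relevant set; the bridge identifying a word cyclic permutation with the bottom-tile move at the \emph{literal} first letter is correct; and your three-letter example showing that a general bottom-tile move (\cref{defn:piling cyclic perm}) can exit the set of cyclic normal forms is accurate and isolates exactly the right difficulty. Two smaller repairs: your non-splitness claim is true, but the argument you gesture at is not the clean one --- if $w = uv$ were split with commuting supports, the rotation $vu$ is a word representing the same group element as $w$, and both cannot be normal by uniqueness of normal forms unless $uv = vu$ as words, which forces $w$ to be a power of a word with clique support and collapses the support to a single vertex; also, the conjugacy-via-piling-rotations criterion you invoke is not \cite[Proposition 2.18]{crisp_conjugacy_2009} (that is the pyramidal-piling statement) but a separate standard fact about cyclically reduced conjugates in RAAGs.

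The genuine gap is that the forward direction --- which is the entire content of the proposition --- rests on your ``rigidity statement'' (every cyclic normal form in the piling orbit is a word rotation of the pyramidal anchor $w_0$), and you never prove it; you name it and defer it. Moreover, the one-line mechanism you propose would not suffice even if verified, for two reasons. First, it concerns the wrong object: showing that the \emph{word} produced by a departing bottom-tile move fails to be shortlex-minimal among its own rotations says nothing direct about the property at issue, namely whether the unique $\leq^{-1}_{\mathrm{SL}}$-normal form of the resulting piling's \emph{element} has all of its rotations normal --- a word can fail minimality while the element it represents still admits a cyclic normal form. Second, your obstruction is local while the orbit is connected: a cyclic normal form could a priori be reached only after several moves, each of which individually leaves the first-letter cycle, so a single-move obstruction must be upgraded to an invariant that is controlled along arbitrary sequences of bottom-tile moves, or equivalently to a characterization of which elements of a cyclic-shuffle class admit cyclic normal forms. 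As written, your proposal proves the trivial direction and correctly maps where the work lies, but the decisive lemma --- the careful interaction of $\leq^{-1}_{\mathrm{SL}}$ with tile moves, which is precisely the analysis carried out in \cite{crisp_conjugacy_2009} via pyramidal pilings --- is missing, so the proposition is not established.
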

We now summarise the linear time algorithm to solve the conjugacy problem in RAAGs. 

\textbf{\underline{Algorithm: Conjugacy problem in RAAGs}}

\textbf{Input}:
\begin{enumerate}
    \item RAAG $A_{\Gamma}$.
    \item Words $v,w \in X^{\ast}$ representing group elements in $A_{\Gamma}$.
\end{enumerate}
\textbf{Step 1: Cyclic reduction}
\begin{adjustwidth}{1.5cm}{}
    Produce the piling representation $\pi^{\ast}(v)$ of $v$, and apply cyclic reduction to $\pi^{\ast}(v)$ to produce a cyclically reduced piling $p$. Repeat this step for $w$ to get a cyclically reduced piling $q$.
\end{adjustwidth}
\textbf{Step 2: Factorisation}
\begin{adjustwidth}{1.5cm}{}
    Factorise each of the pilings $p$ and $q$ into non-split factors. If the collection of subgraphs (using \cite{crisp_conjugacy_2009} convention) do not coincide, \textbf{Output} = \texttt{\color{blue}False}.
\end{adjustwidth}
\textbf{Step 3: Compare non-split factors}
\begin{adjustwidth}{1.5cm}{}
    If $p = p^{(1)}\dots p^{(k)}$ and $q = q^{(1)}\dots q^{(k)}$ are the factorisations found in Step 2, then for each $i = 1, \dots, k$, do the following:
    \begin{adjustwidth}{1.5cm}{}
        \begin{enumerate}[label=(\roman*)]
            \item Transform the non-split cyclically reduced pilings $p^{(i)}$ and $q^{(i)}$ into pyramidal pilings $\Tilde{p}^{(i)}$ and $\Tilde{q}^{(i)}$.
            \item Produce the unique words in cyclic normal form representing these pilings, denoted $\sigma^{\ast}\left(\Tilde{p}^{(i)}\right)$ and $\sigma^{\ast}\left(\Tilde{q}^{(i)}\right)$.
            \item Decide whether $\sigma^{\ast}\left(\Tilde{p}^{(i)}\right)$ and $\sigma^{\ast}\left(\Tilde{q}^{(i)}\right)$ are equal up to a cyclic permutation. If not, \textbf{Output} = \texttt{\color{blue}False}.
        \end{enumerate}
    \end{adjustwidth}
    \textbf{Output} = \texttt{\color{blue}True}.
\end{adjustwidth}
\begin{rmk}
    These definitions can be adapted to find a linear time solution to the conjugacy problem in RACGs. To construct pilings, we would not use any $-$ beads since each generator has order two, and cancel any pairs of consecutive $+$ beads. 
\end{rmk}

\subsection{Twisted conjugacy problem algorithms}
The aim of this section is to reprove \cref{cor:CP solvable RAAGs len p CAT0}, by creating an implementable algorithm for the twisted conjugacy problem in RAAGs, with respect to length-preserving automorphisms. First, we define a linear time algorithm based on \cref{subsec:CP in RAAGs}, when our automorphism is a composition of inversions. It remains unclear whether this algorithm can be adapted to include all length-preserving automorphisms. Instead we provide an alternative algorithm, of which the complexity is unknown, which solves the twisted conjugacy problem in RAAGs for all length-preserving automorphisms. 

First, we define twisted versions of cyclic permutations and cyclic reduction.

\begin{defn}\label{defn:cyc perm}
Let $G = \langle X \rangle$, and let $w = x_{1}\dots x_{n} \in X^{\ast}$ be a geodesic, where $x_{i} \in X$ for all $1 \leq i \leq n$. Let $\phi \in \mathrm{Aut}(G)$ be of finite order $m$. We define a \emph{$\phi$-cyclic permutation} of $w$ to be any word of the form
\[ w' = \phi^{k}(x_{i+1})\dots \phi^{k}(x_{n})\phi^{k-1}(x_{1})\dots \phi^{k-1}(x_{i}),
\]
for some $0 \leq k \leq m-1$.  
\end{defn}
\comm{
We note that if $\phi$ is the trivial automorphism, a $\phi$-cyclic permutation is equivalent to a cyclic permutation.}

\begin{defn}\label{def: CR}
Let $\RAAG = \langle V(\Gamma) \rangle$, and let $v \in X^{\ast}$ be a geodesic. We say $v$ is \emph{$\phi$-cyclically reduced} ($\phi$-CR) if there does not exist a sequence of $\phi$-cyclic permutations, commutation relations and free reductions to a geodesic $w \in X^{\ast}$, such that $l(v) > l(w)$. 
\end{defn}
We also recall the following result which is an analogue of the behaviour of conjugate cyclically reduced elements in RAAGs.
\begin{thm}\cite[Corollary 3.14]{crowe_conjugacy_2023}\label{thm:language sequence}
Let $\RAAG = \langle V(\Gamma) \rangle$ be a RAAG, and let $\phi \in \mathrm{Aut}(\RAAG)$ be of finite order. Let $u,v \in X^{\ast}$ be two $\phi$-cyclically reduced words. Then $u$ and $v$ are $\phi$-conjugate if and only if $u$ and $v$ are related by a finite sequence of $\phi$-cyclic permutations, commutation relations and free reductions.    
\end{thm}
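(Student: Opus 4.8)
The forward implication is immediate. Commutation relations and free reductions do not change the element of $\RAAG$ represented, and each $\phi$-cyclic permutation (as in \cref{defn:cyc perm}) is realised by $\phi$-conjugating by a prefix: replacing $w = x_1\cdots x_n$ by $x_2\cdots x_n\,\phi^{-1}(x_1)$ is exactly $\phi(g)^{-1}wg$ with $g = \phi^{-1}(x_1)$, since then $\phi(g)^{-1} = x_1^{-1}$, and the general form with parameter $k$ follows by iterating and using $\phi^m = 1$. Hence every elementary move preserves the $\phi$-conjugacy class, so words related by a finite sequence of such moves are $\phi$-conjugate.

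For the converse I would induct on the length of a minimal $\phi$-conjugator. Suppose $u,v$ are $\phi$-cyclically reduced with $v =_{\RAAG} \phi(w)^{-1}uw$, and choose $w$ of minimal length among all $\phi$-conjugators; fix a geodesic $w = a_1\cdots a_n$. If $n=0$ then $u =_{\RAAG} v$ and the words differ only by commutation relations and free reductions, settling the base case. The engine of the induction is the following local lemma: for $\phi$-cyclically reduced $u$ and a single letter $x \in X$, the word $\phi(x)^{-1}ux$ has length $\geq |u|$, with equality precisely when it is a $\phi$-cyclic permutation of $u$ (and is then again $\phi$-cyclically reduced). To prove it I would note that simultaneous cancellation of $x$ on the right ($x^{-1} \in \mathrm{LL}(u)$) and of $\phi(x)^{-1}$ on the left ($\phi(x) \in \mathrm{FL}(u)$) is impossible: it would force $u$ to admit a geodesic $\phi(x)\,u''\,x^{-1}$ up to commutation, and the $\phi$-cyclic permutation moving the terminal $x^{-1}$ to the front would yield $\phi(x)^{-1}\phi(x)u'' =_{\RAAG} u''$, contradicting $\phi$-cyclic reducedness (see \cref{def: CR}). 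Thus at most one side cancels; one-sided cancellation is exactly a single-letter $\phi$-cyclic permutation, while no cancellation gives length $|u|+2$.

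Granting the local lemma, peeling the last letter $a_n$ produces $u_1 := \phi(a_n)^{-1}ua_n$ with $v =_{\RAAG}\phi(a_1\cdots a_{n-1})^{-1}u_1(a_1\cdots a_{n-1})$, so $u_1 \sim_\phi v$ through the shorter conjugator $a_1\cdots a_{n-1}$. The crux — and the step I expect to be the main obstacle — is to show that minimality of $w$ forces the equality case $|u_1| = |u|$, so that $u \to u_1$ is a $\phi$-cyclic permutation and the induction continues on a $\phi$-cyclically reduced word. This is the twisted analogue of the principle that a minimal conjugator admits no crossing cancellation: if $a_n$ cancelled neither into $u$ on the right nor (as $\phi(a_n)$) on the left, then in the reduced form of $\phi(w)^{-1}uw$ the terminal $a_n$ could be consumed only by commuting it across $u$ to annihilate some $\phi(a_j)^{-1}$, i.e. $a_n = \phi(a_j)$ with $a_n$ commuting with the entire support of $u$; this would let one excise $a_n$ and its matching left letter from $w$, yielding a strictly shorter $\phi$-conjugator and contradicting minimality. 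I would make this precise via the piling normal form of \cref{prop:shuffles pilings}, using the top/bottom-tile description of $\mathrm{FL}(u)$ and $\mathrm{LL}(u)$ to argue that the only way to consume the outermost tiles of $\phi(w)^{-1}uw$ without shortening $w$ is through a top or bottom $s_i$-tile of $u$, which is precisely a $\phi$-cyclic permutation.

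Two supporting points finish the plan. First, I would record that all $\phi$-cyclically reduced representatives of a single $\phi$-conjugacy class have equal length, so that $|u| = |v|$ is available throughout; this follows from the local lemma, since it shows single-letter $\phi$-conjugation never shortens a $\phi$-cyclically reduced word and $\phi$-conjugacy is generated by such steps. Second, I would use that in the cases of interest $\phi$ is length-preserving, so by \cref{prop:combo len p} it permutes $X$ and maps geodesics to geodesics, keeping the cancellation bookkeeping clean; for a general finite-order $\phi$ whose image of a geodesic need not be geodesic, the same scheme runs after first replacing $\phi(w)$ by a geodesic representative, which affects only the bookkeeping and not the structure of the argument.
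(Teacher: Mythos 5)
Your forward direction is correct, and for \emph{length-preserving} $\phi$ your skeleton for the converse (single-letter local lemma; no two-sided cancellation because a $\phi$-cyclic permutation of $\phi(x)u''x^{-1}$ would shorten it, contradicting \cref{def: CR}; minimality of the conjugator forcing the equality case via an excision argument on pilings) is the standard and workable route. But the theorem is stated for an \emph{arbitrary} finite-order $\phi\in\mathrm{Aut}(\RAAG)$, and such automorphisms need not be length-preserving --- the paper itself invokes a finite-order non-length-preserving example via \cite[Theorem 4.12]{crowe_conjugacy_2023}. Your closing sentence, that the general case ``affects only the bookkeeping,'' is where the proposal genuinely fails. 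If $\phi$ is not length-preserving then $\phi(x)$ for $x\in X$ is a word, not a letter, so the dichotomy $|\phi(x)^{-1}ux|\in\{|u|,|u|+2\}$ with equality exactly at a $\phi$-cyclic permutation collapses; moreover the $\phi$-cyclic permutations of \cref{defn:cyc perm} themselves change word length (each letter is replaced by its $\phi^{k}$-image), so the two pillars of your induction --- that $|u|$ stays constant while the conjugator shrinks, and that the equality case is again a geodesic $\phi$-cyclically reduced word --- are simply unavailable. Relatedly, your ``supporting point'' that all $\phi$-cyclically reduced representatives of a class have equal length does not follow from the local lemma as you derive it, even in the length-preserving case: a chain of single-letter twisted conjugations passes through intermediate words that need not be $\phi$-cyclically reduced, and the lemma says nothing about those; equal length is a \emph{consequence} of the full minimal-conjugator induction, not a free input to it. (Note the paper does not prove this statement at all --- it imports it from \cite[Corollary 3.14]{crowe_conjugacy_2023} --- so there is no in-paper argument to compare against; your sketch must stand on its own, and in the general finite-order case it does not.)

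There is also a concrete error in the peeling step. With $w=a_{1}\cdots a_{n}$ one has
\[ \phi(w)^{-1}uw \;=\; \phi(a_{2}\cdots a_{n})^{-1}\left(\phi(a_{1})^{-1}\,u\,a_{1}\right)(a_{2}\cdots a_{n}), \]
so the letter that can be peeled through $u$ is the \emph{first} one, $a_{1}$. Your displayed claim that $u_{1}:=\phi(a_{n})^{-1}ua_{n}$ satisfies $v=_{\RAAG}\phi(a_{1}\cdots a_{n-1})^{-1}u_{1}(a_{1}\cdots a_{n-1})$ is false in general: it corresponds to twisted conjugation by $a_{n}a_{1}\cdots a_{n-1}$, a cyclic permutation of $w$, not by $w$ itself. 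This is fixable (peel $a_{1}$ and carry the conjugator $a_{2}\cdots a_{n}$, or run the argument on the outermost letters as your ``crux'' paragraph actually does), but as written the induction does not go through. With that repaired, and restricted to length-preserving $\phi$, the remaining burden is the crux you correctly identified --- that minimality forces the equality case --- which you only sketch; the excision argument (if the outermost $a_{n}$ is consumed across $u$ by some $\phi(a_{j})^{-1}$, delete the matching pair from $w$ to contradict minimality) is the right idea and can indeed be made precise with the top/bottom-tile description of $\mathrm{FL}$ and $\mathrm{LL}$, but one must verify the commutation hypotheses needed for the excised word to remain a twisted conjugator, and the non-length-preserving case needs an entirely different mechanism.
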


\subsubsection{Inversions}
Throughout this section let $\phi \in \mathrm{Aut}(\RAAG)$ be a composition of inversions. First we consider twisted cyclic reduction in pilings. For inversions, we have the following characterisation of $\phi$-CR words.  
\begin{thm}\cite[Corollary 3.18]{crowe_conjugacy_2023}\label{form:inversions}
    Let $\RAAG = \langle V(\Gamma) \rangle$, and let $\phi \in \mathrm{Aut}(\RAAG)$ be a composition of inversions. Then any geodesic $v \in X^{\ast}$ is $\phi$-CR if and only if $v$ cannot be written in the form 
\[ v =_{\RAAG} \phi(u)^{-1}wu,
\]
where $l(w) < l(v)$ and $\phi(u)^{-1}wu \in X^{\ast}$ is geodesic. 
\end{thm}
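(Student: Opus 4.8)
The plan is to prove the two implications separately, after reducing the combinatorial notion of being $\phi$-CR to the cleaner invariant of being of minimal length in its $\phi$-twisted conjugacy class. Two facts drive everything: since $\phi$ is a composition of inversions we have $\phi^2 = \mathrm{id}$ and $\phi(x) \in \{x, x^{-1}\}$ for each $x \in X$; and since $\phi$ is length-preserving (\cref{prop:combo len p}) it sends geodesics to geodesics, so if $u$ is geodesic then $\phi(u)^{-1}$ is geodesic with no internal cancellation. The first step I would carry out is the auxiliary equivalence: a geodesic $v$ is $\phi$-CR if and only if $v$ has minimal length in its $\phi$-twisted conjugacy class. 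One direction is routine, since $\phi$-cyclic permutations produce $\phi$-twisted conjugates while commutation relations and free reductions produce equal elements, so no operation in \cref{def: CR} can drop below the minimum. For the converse I would invoke \cref{thm:language sequence}: if $v$ were $\phi$-CR but not minimal, pick a $\phi$-CR representative $w$ of the class with $l(w) < l(v)$; then $v$ and $w$ are both $\phi$-CR and $\phi$-conjugate, so \cref{thm:language sequence} supplies a sequence of $\phi$-cyclic permutations, commutation relations and free reductions from $v$ to $w$ that strictly reduces length, contradicting that $v$ is $\phi$-CR.

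Granting this equivalence, the implication ``if $v$ has the stated form then $v$ is not $\phi$-CR'' is immediate: a factorisation $v =_{\RAAG} \phi(u)^{-1} w u$ with $l(w) < l(v)$ is precisely the relation $w \sim_{\phi} v$ of \cref{defn:twisted conjugacy}, so $w$ is a shorter element of the same $\phi$-twisted conjugacy class and $v$ is not minimal, hence not $\phi$-CR. I would also record the explicit mechanism to make the statement concrete: writing $u = y_1 \cdots y_k$, so that $v = \phi(y_k)^{-1}\cdots \phi(y_1)^{-1}\, w\, y_1 \cdots y_k$, a single $\phi$-cyclic permutation (\cref{defn:cyc perm}) moving the leading letter $\phi(y_k)^{-1}$ to the tail replaces it by $\phi(\phi(y_k)^{-1}) = y_k^{-1}$, creating the cancellation $y_k y_k^{-1}$; iterating peels off $u$ and $\phi(u)^{-1}$ one letter at a time and lands on $w$.

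For the reverse implication I would argue the contrapositive: assuming $v$ is not $\phi$-CR, hence not minimal, I must produce $u$ and $w$ with $v =_{\RAAG} \phi(u)^{-1} w u$ geodesic and $l(w) < l(v)$. I would consider the nonempty set of pairs $(u,w)$ of geodesic words with $v =_{\RAAG} \phi(u)^{-1} w u$ and $l(w) < l(v)$, and choose one with $l(u)$ minimal; the claim is that for this choice the product $\phi(u)^{-1} w u$ is already geodesic. Each of the three blocks $\phi(u)^{-1}$, $w$, $u$ is geodesic (using that $\phi$ preserves geodesics and that a minimal $w$ may be taken geodesic), so any failure of geodesity must arise from a cancelling pair of letters lying in two different blocks. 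A case analysis on which blocks the cancelling letters occupy --- between $w$ and $u$, between $\phi(u)^{-1}$ and $w$, or across $\phi(u)^{-1}$ and $u$ ``through'' $w$ --- should show in every case that one can absorb a boundary letter and rewrite $v = \phi(u')^{-1} w' u'$ with $l(u') < l(u)$ and $l(w') \le l(w) < l(v)$, contradicting minimality of $l(u)$. The inversion hypothesis enters crucially in the third case: a cancellation across $\phi(u)^{-1}$ and $u$ can occur only on a generator fixed by $\phi$, since for a genuinely inverted generator the two copies coincide rather than cancel, and fixedness is exactly what lets the offending letter commute through and reduce $u$.

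The main obstacle is this final cancellation analysis, because in a RAAG cancellation is only defined up to the commutation relations, so the phrase ``the cancelling letters lie in two blocks'' must be made precise: a letter may first have to commute through an intervening block before meeting its inverse partner. I expect the cleanest control to come from phrasing the bookkeeping in terms of the piling $\pi^{\ast}(\phi(u)^{-1} w u)$ of \cref{defn:abstract piling}, where non-geodesity is visible as a cancelling pair consisting of a $+$ bead and a $-$ bead on a single stack, and the ``which block'' question becomes a statement about where those two beads originate. This keeps the commutation relations implicit, reduces the whole argument to the three boundary cases above, and dovetails with the piling framework used in the rest of the section.
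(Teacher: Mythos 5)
The paper contains no proof of \cref{form:inversions} to compare against: the statement is imported wholesale from \cite[Corollary 3.18]{crowe_conjugacy_2023}, so your proposal must be judged on its own merits. On those merits it is correct, and all of its key claims check out.

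Your auxiliary equivalence is sound: every operation allowed in \cref{def: CR} stays inside the $\phi$-twisted conjugacy class (for a $\phi$-cyclic permutation $ab \mapsto \phi(b)a$ the conjugator is $b^{-1}$, and $\phi^{j}(x) \sim_{\phi} x$ via the conjugator $x^{-1}$), which gives ``minimal $\Rightarrow$ $\phi$-CR'', and \cref{thm:language sequence} gives the converse. The peeling mechanism is also right: with $\phi^{2}=\mathrm{id}$, moving the leading letter $\phi(y_{k})^{-1}$ to the tail (exponent $k=0$ in \cref{defn:cyc perm}) deposits $\phi^{-1}\bigl(\phi(y_{k})^{-1}\bigr)=y_{k}^{-1}$, which freely cancels; and each intermediate word $\phi(u')^{-1}wu'$ is a contiguous subword of the geodesic $\phi(u)^{-1}wu$, hence geodesic, so the operations remain legal at every stage. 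Your deferred case analysis does close, provided non-geodesity is phrased via the standard RAAG deletion criterion: an innermost pair $x^{\epsilon}\cdots x^{-\epsilon}$ all of whose intervening letters commute with $x$ and none of which equals $x^{\pm 1}$. A pair meeting $w$ lets you commute the offending letter to the boundary of $u$ and absorb it, giving $l(w'')\le l(w)<l(v)$ and $l(u')=l(u)-1$, contradicting minimality of $l(u)$. A pair across $\phi(u)^{-1}$ and $u$ must occur at corresponding positions $i=j$ (otherwise some intervening letter would itself be $x^{\pm1}$, violating innermostness); this is impossible when $\phi(x)=x^{-1}$, since then $\phi(y_{i})^{-1}=y_{i}$ so the two letters coincide rather than cancel, and when $\phi(x)=x$ it lets you pull $x^{-\epsilon}$ to the front of $u$ and use $x^{\epsilon}wx^{-\epsilon}=_{\RAAG}w$, again shortening $u$. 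This is exactly where the inversion hypothesis enters, as you predicted.

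One structural remark: \cref{thm:language sequence} is dispensable. Your peeling argument proves ``form exists $\Rightarrow$ not $\phi$-CR'' directly, and the hard direction only uses the routine half of your equivalence (not $\phi$-CR $\Rightarrow$ not minimal, since the operations stay in the class) followed by the minimisation argument. Arranging the proof this way makes it self-contained, which is worth doing given that \cref{thm:language sequence} is quoted from the very same external paper as the statement you are proving.
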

With this result we can define $\phi$-cyclic reduction of pilings, equivalent to \cref{def: CR}, with respect to inversions. 
\begin{defn}
    Let $p \in P$. We define a $\phi$-CR of $p$ to be the removal of top and bottom tiles as follows. Either:
    \begin{enumerate}
    \item For each $\sigma_{i}$-stack such that $\phi(s_{i}) = s^{-1}_{i}$, and $\sigma_{i}$ starts and ends with a $+$ bead or $-$ bead, then remove the bottom and top tiles of $\sigma_{i}$, or
    \item For all remaining stacks, apply cyclic reduction as given in \cref{defn:CR piling}.
\end{enumerate}
    A piling $p \in P$ is $\phi$-CR if no $\phi$-cyclic reduction is possible. In particular, we can $\phi$-CR pilings in linear time by \cref{form:inversions}.
\end{defn}
\begin{exmp}\label{exmp: inversions}
    Consider the RAAG defined in \cite[Example 2.4]{crisp_conjugacy_2009} with presentation
    \[A_{\Gamma} = \langle a_{1}, a_{2}, a_{3}, a_{4} \mid [a_{1}, a_{4}] = 1, [a_{2}, a_{3}] = 1, [a_{2}, a_{4}] = 1 \rangle.\]
    Let $\phi \colon a_{2} \mapsto a^{-1}_{2}, a_{4} \mapsto a^{-1}_{4}$ and fix all remaining generators. \cref{fig:twisted CR inversions} gives an example of a $\phi$-CR of a piling. In particular, we $\phi$-CR the element
    \[ u =_{\RAAG} a_{2}a^{-1}_{4}a_{3}a_{1}a_{2}a^{-1}_{1}a_{2}a_{2} = \phi(a_{2})^{-1}a^{-1}_{4}a_{3}a_{1}a_{2}a^{-1}_{1}a_{2}a_{2} \xrightarrow{\phi-\text{CR}} a^{-1}_{4}a_{3}a_{1}a_{2}a^{-1}_{1}a_{2}.
    \]
    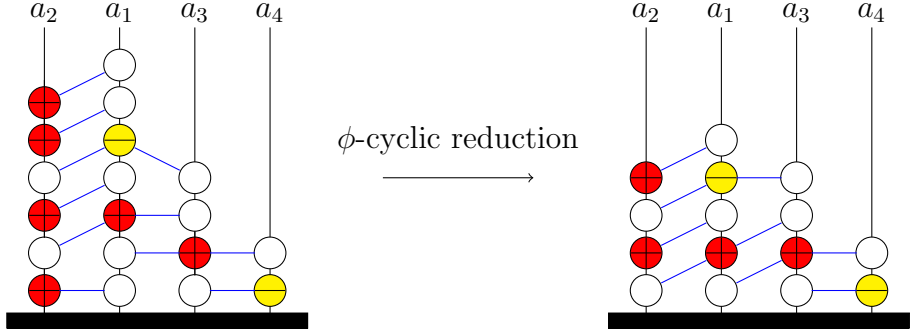
\begin{figure}[h]
        \centering
        \begin{tikzpicture}
            \filldraw[black] (0,0) rectangle (4, 0.2);
            \draw[black] (0.5, 0.2) -- (0.5, 4);
            \node at (0.5, 4.2) {$a_{2}$};
            \node at (1.5, 4.2) {$a_{1}$};
            \node at (2.5, 4.2) {$a_{3}$};
            \node at (3.5, 4.2) {$a_{4}$};
            \draw[black] (1.5,0.2) -- (1.5,4) ;
            \draw[black] (2.5,0.2) -- (2.5,4) ;
            \draw[black] (3.5,0.2) -- (3.5,4) ;
            \filldraw[black] (8,0) rectangle (12, 0.2);
            \node at (8.5, 4.2) {$a_{2}$};
            \node at (9.5, 4.2) {$a_{1}$};
            \node at (10.5, 4.2) {$a_{3}$};
            \node at (11.5, 4.2) {$a_{4}$};
            \draw[black] (8.5,0.2) -- (8.5,4) ;
            \draw[black] (9.5,0.2) -- (9.5,4) ;
            \draw[black] (10.5,0.2) -- (10.5,4) ;
            \draw[black] (11.5,0.2) -- (11.5,4) ;
            \draw[->] (5, 2) -- (7,2);
            \node at (6,2.5) {$\phi$-cyclic reduction};
            \filldraw[color = black, fill=red] (0.5, 0.5) circle (6pt);
            \draw[black] (0.3, 0.5) -- (0.7, 0.5);
            \draw[black] (0.5, 0.2) -- (0.5, 0.8);
            \filldraw[color = black, fill=white] (1.5, 0.5) circle (6pt);
            \filldraw[color = black, fill=white] (2.5, 0.5) circle (6pt);
            \filldraw[color = black, fill=yellow] (3.5, 0.5) circle (6pt);
            \draw[black] (3.3, 0.5) -- (3.7, 0.5);
            \filldraw[color = black, fill=white] (0.5, 1) circle (6pt);
            \filldraw[color = black, fill=white] (1.5, 1) circle (6pt);
            \filldraw[color = black, fill=red] (2.5, 1) circle (6pt);
            \draw[black] (2.3, 1) -- (2.7, 1);
            \draw[black] (2.5, 0.7) -- (2.5, 1.3);
            \filldraw[color = black, fill=white] (3.5, 1) circle (6pt);
            \filldraw[color = black, fill=red] (0.5, 1.5) circle (6pt);
            \draw[black] (0.3, 1.5) -- (0.7, 1.5);
            \draw[black] (0.5, 1.2) -- (0.5, 1.8);
            \filldraw[color = black, fill=red] (1.5, 1.5) circle (6pt);
            \draw[black] (1.3, 1.5) -- (1.7, 1.5);
            \draw[black] (1.5, 1.2) -- (1.5, 1.8) ;
            \filldraw[color = black, fill=white] (2.5, 1.5) circle (6pt);
            \filldraw[color = black, fill=white] (0.5, 2) circle (6pt);
            \filldraw[color = black, fill=white] (1.5, 2) circle (6pt);
            \filldraw[color = black, fill=white] (2.5, 2) circle (6pt);
            \filldraw[color = black, fill=red] (0.5, 2.5) circle (6pt);
            \draw[black] (0.3, 2.5) -- (0.7, 2.5);
            \draw[black] (0.5, 2.3) -- (0.5, 2.8);
            \filldraw[color = black, fill=yellow] (1.5, 2.5) circle (6pt);
            \draw[black] (1.3, 2.5) -- (1.7, 2.5);
            \filldraw[color = black, fill=red] (0.5, 3) circle (6pt);
            \draw[black] (0.3, 3) -- (0.7, 3);
            \draw[black] (0.5, 2.7) -- (0.5, 3.3);
            \filldraw[color = black, fill=white] (1.5, 3) circle (6pt);
            \filldraw[color = black, fill=white] (1.5, 3.5) circle (6pt);
            \draw[blue] (0.7, 3.1) -- (1.3, 3.4);
            \draw[blue] (0.7, 2.6) -- (1.3, 2.9);
            \draw[blue] (0.7, 2.1) -- (1.3, 2.4);
            \draw[blue] (1.7, 2.4) -- (2.3, 2.1) ;
            \draw[blue] (0.7, 1.6) -- (1.3, 1.9);
            \draw[blue] (0.7, 1.1) -- (1.3, 1.4);
            \draw[blue] (0.7, 0.5)--(1.3, 0.5);
            \draw[blue] (1.7, 1.5) -- (2.3, 1.5);
            \draw[blue] (1.7, 1) -- (2.3, 1);
            \draw[blue] (2.7, 1) -- (3.3, 1);
            \draw[blue] (2.7, 0.5) -- (3.3, 0.5);
            %RHS Piling
            \filldraw[color = black, fill = white] (8.5, 0.5) circle (6pt);
            \filldraw[color = black, fill = white] (9.5, 0.5) circle (6pt);
            \filldraw[color = black, fill = white] (10.5, 0.5) circle (6pt);
            \filldraw[color = black, fill = yellow] (11.5, 0.5) circle (6pt);
            \draw[blue] (10.7, 0.5) -- (11.3, 0.5);
            \draw[black] (11.3, 0.5)--(11.7, 0.5);
            \filldraw[color = black, fill = red] (8.5, 1) circle (6pt);
            \filldraw[color = black, fill = red] (9.5, 1) circle (6pt);
            \filldraw[color = black, fill = red] (10.5, 1) circle (6pt);
            \filldraw[color = black, fill = white] (11.5, 1) circle (6pt);
            \draw[black] (8.3, 1) -- (8.7, 1);
            \draw[black] (8.5, 0.8) -- (8.5, 1.2) ; 
            \draw[black] (9.3, 1) -- (9.7, 1);
            \draw[black] (9.5, 0.8) -- (9.5, 1.2) ; 
            \draw[black] (10.3, 1) -- (10.7, 1);
            \draw[black] (10.5, 0.8) -- (10.5, 1.2) ; 
            \draw[blue] (8.7, 0.6) -- (9.3, 0.9);
            \draw[blue] (9.7, 0.6) -- (10.3, 0.9);
            \draw[blue] (10.7, 1) -- (11.3, 1);
            \filldraw[color = black, fill = white] (8.5, 1.5) circle (6pt);
            \filldraw[color = black, fill = white] (9.5, 1.5) circle (6pt);
            \filldraw[color = black, fill = white] (10.5, 1.5) circle (6pt);
            \draw[blue] (8.7, 1.1) -- (9.3, 1.4);
            \draw[blue] (9.7, 1.1) -- (10.3, 1.4);
            \filldraw[color=black, fill = red] (8.5, 2) circle (6pt);
            \filldraw[color=black, fill = yellow] (9.5, 2) circle (6pt);
            \filldraw[color=black, fill = white] (10.5, 2) circle (6pt);
            \draw[black] (8.3, 2) -- (8.7, 2);
            \draw[black] (8.5, 1.8) -- (8.5, 2.2);
            \draw[black] (9.3, 2) -- (9.7, 2);
            \draw[blue] (8.7, 1.6) -- (9.3, 1.9);
            \draw[blue] (9.7, 2) -- (10.3, 2);
            \filldraw[color=black, fill = white] (9.5, 2.5) circle (6pt);
            \draw[blue] (8.7, 2.1) -- (9.3, 2.4);
        \end{tikzpicture}
        \caption{$\phi$-CR for inversions.}
        \label{fig:twisted CR inversions}
    \end{figure}
\end{exmp}
We now define $\phi$-cyclic permutations in pilings, to coincide with \cref{defn:cyc perm}. For twisted conjugacy, when moving a tile from the bottom to the top of our piling (or vice versa), we need to also apply $\phi$ to the corresponding $+$ or $-$ bead. 
\begin{defn}
    Let $p \in P$. We define a $\phi$-cyclic permutation of $p$ as follows:
    \begin{enumerate}
    \item If $\phi(s_{i}) = s_{i}^{-1}$ and the $\sigma_{i}$-stack starts with a $+$ bead, then remove the tile corresponding to this $+$ bead, and add a $-$ bead, with corresponding 0-beads, to the top of the $\sigma_{i}$-stack. Similarly if the $\sigma_{i}$-stack starts with a $-$ bead, then remove the tile corresponding to this $-$ bead, and add a $+$ bead, with corresponding 0-beads, to the top of the $\sigma_{i}$-stack. This definition is analogous for moving tiles from the top to the bottom of the piling.
    \item Otherwise, if the $\sigma_{i}$-stack starts or ends with a $+$ or $-$ bead, then apply a cyclic permutation as given in \cref{defn:piling cyclic perm}. 
\end{enumerate}

\end{defn}
\begin{exmp}
    We recall \cref{exmp: inversions}, and consider a $\phi$-cyclic permutation of our $\phi$-CR piling. \cref{fig:twisted perm inversion} gives an example of a $\phi$-cyclic permutation. As words, we are applying the operation
\[ u =_{\RAAG} a^{-1}_{4}a_{3}a_{1}a_{2}a^{-1}_{1}a_{2} \xrightarrow{\phi-\text{CP}}  a_{3}a_{1}a_{2}a^{-1}_{1}a_{2}\phi\left(a^{-1}_{4}\right) =  a_{3}a_{1}a_{2}a^{-1}_{1}a_{2}a_{4}.
\]
    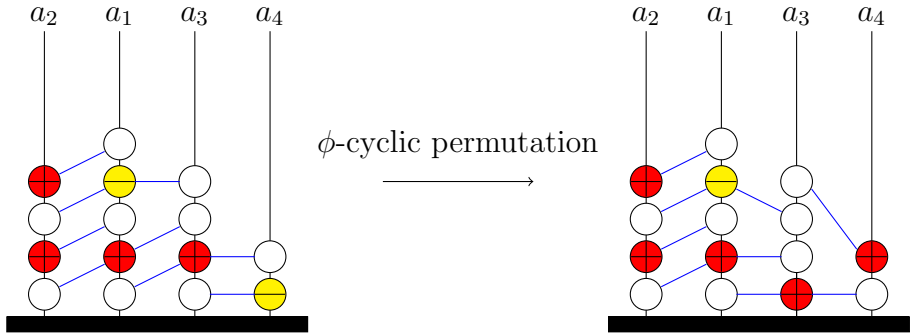
\begin{figure}[h]
        \centering
        \begin{tikzpicture}
            \filldraw[black] (0,0) rectangle (4, 0.2);
            \draw[black] (0.5, 0.2) -- (0.5, 4);
            \node at (0.5, 4.2) {$a_{2}$};
            \node at (1.5, 4.2) {$a_{1}$};
            \node at (2.5, 4.2) {$a_{3}$};
            \node at (3.5, 4.2) {$a_{4}$};
            \draw[black] (1.5,0.2) -- (1.5,4) ;
            \draw[black] (2.5,0.2) -- (2.5,4) ;
            \draw[black] (3.5,0.2) -- (3.5,4) ;
            \filldraw[black] (8,0) rectangle (12, 0.2);
            \node at (8.5, 4.2) {$a_{2}$};
            \node at (9.5, 4.2) {$a_{1}$};
            \node at (10.5, 4.2) {$a_{3}$};
            \node at (11.5, 4.2) {$a_{4}$};
            \draw[black] (8.5,0.2) -- (8.5,4) ;
            \draw[black] (9.5,0.2) -- (9.5,4) ;
            \draw[black] (10.5,0.2) -- (10.5,4) ;
            \draw[black] (11.5,0.2) -- (11.5,4) ;
            \draw[->] (5, 2) -- (7,2);
            \node at (6,2.5) {$\phi$-cyclic permutation};
            %LHS Piling
            \filldraw[color = black, fill = white] (0.5, 0.5) circle (6pt);
            \filldraw[color = black, fill = white] (1.5, 0.5) circle (6pt);
            \filldraw[color = black, fill = white] (2.5, 0.5) circle (6pt);
            \filldraw[color = black, fill = yellow] (3.5, 0.5) circle (6pt);
            \draw[blue] (2.7, 0.5) -- (3.3, 0.5);
            \draw[black] (3.3, 0.5)--(3.7, 0.5);
            \filldraw[color = black, fill = red] (0.5, 1) circle (6pt);
            \filldraw[color = black, fill = red] (1.5, 1) circle (6pt);
            \filldraw[color = black, fill = red] (2.5, 1) circle (6pt);
            \filldraw[color = black, fill = white] (3.5, 1) circle (6pt);
            \draw[black] (0.3, 1) -- (0.7, 1);
            \draw[black] (0.5, 0.8) -- (0.5, 1.2) ; 
            \draw[black] (1.3, 1) -- (1.7, 1);
            \draw[black] (1.5, 0.8) -- (1.5, 1.2) ; 
            \draw[black] (2.3, 1) -- (2.7, 1);
            \draw[black] (2.5, 0.8) -- (2.5, 1.2) ; 
            \draw[blue] (0.7, 0.6) -- (1.3, 0.9);
            \draw[blue] (1.7, 0.6) -- (2.3, 0.9);
            \draw[blue] (2.7, 1) -- (3.3, 1);
            \filldraw[color = black, fill = white] (0.5, 1.5) circle (6pt);
            \filldraw[color = black, fill = white] (1.5, 1.5) circle (6pt);
            \filldraw[color = black, fill = white] (2.5, 1.5) circle (6pt);
            \draw[blue] (0.7, 1.1) -- (1.3, 1.4);
            \draw[blue] (1.7, 1.1) -- (2.3, 1.4);
            \filldraw[color=black, fill = red] (0.5, 2) circle (6pt);
            \filldraw[color=black, fill = yellow] (1.5, 2) circle (6pt);
            \filldraw[color=black, fill = white] (2.5, 2) circle (6pt);
            \draw[black] (0.3, 2) -- (0.7, 2);
            \draw[black] (0.5, 1.8) -- (0.5, 2.2);
            \draw[black] (1.3, 2) -- (1.7, 2);
            \draw[blue] (0.7, 1.6) -- (1.3, 1.9);
            \draw[blue] (1.7, 2) -- (2.3, 2);
            \filldraw[color=black, fill = white] (1.5, 2.5) circle (6pt);
            \draw[blue] (0.7, 2.1) -- (1.3, 2.4);
            %RHS Piling
            \filldraw[color = black, fill = white] (8.5, 0.5) circle (6pt);
            \filldraw[color = black, fill = white] (9.5, 0.5) circle (6pt);
            \filldraw[color = black, fill = red] (10.5, 0.5) circle (6pt);
            \filldraw[color = black, fill = white] (11.5, 0.5) circle (6pt);
            \draw[blue] (10.7, 0.5) -- (11.3, 0.5);
             \draw[blue] (9.7, 0.5) -- (10.3, 0.5);
            \draw[black] (10.3, 0.5)--(10.7, 0.5);
            \draw[black] (10.5, 0.2) -- (10.5, 0.8);
            \filldraw[color = black, fill = red] (8.5, 1) circle (6pt);
            \filldraw[color = black, fill = red] (9.5, 1) circle (6pt);
            \filldraw[color = black, fill = white] (10.5, 1) circle (6pt);
            \filldraw[color = black, fill = red] (11.5, 1) circle (6pt);
            \draw[black] (8.3, 1) -- (8.7, 1);
            \draw[black] (8.5, 0.8) -- (8.5, 1.2) ; 
            \draw[black] (9.3, 1) -- (9.7, 1);
            \draw[black] (9.5, 0.8) -- (9.5, 1.2) ; 
            \draw[black] (11.3, 1) -- (11.7, 1);
            \draw[black] (11.5, 0.8) -- (11.5, 1.2);
            \draw[blue] (8.7, 0.6) -- (9.3, 0.9);
            \draw[blue] (9.7, 1) -- (10.3, 1);
            \filldraw[color = black, fill = white] (8.5, 1.5) circle (6pt);
            \filldraw[color = black, fill = white] (9.5, 1.5) circle (6pt);
            \filldraw[color = black, fill = white] (10.5, 1.5) circle (6pt);
            \draw[blue] (8.7, 1.1) -- (9.3, 1.4);
            \filldraw[color=black, fill = red] (8.5, 2) circle (6pt);
            \filldraw[color=black, fill = yellow] (9.5, 2) circle (6pt);
            \filldraw[color=black, fill = white] (10.5, 2) circle (6pt);
            \draw[black] (8.3, 2) -- (8.7, 2);
            \draw[black] (8.5, 1.8) -- (8.5, 2.2);
            \draw[black] (9.3, 2) -- (9.7, 2);
            \draw[blue] (8.7, 1.6) -- (9.3, 1.9);
            \draw[blue] (9.7, 1.9) -- (10.3, 1.6);
            \filldraw[color=black, fill = white] (9.5, 2.5) circle (6pt);
            \draw[blue] (8.7, 2.1) -- (9.3, 2.4);
            \draw[blue] (10.7, 1.9) -- (11.3, 1.1);
        \end{tikzpicture}
        \caption{$\phi$-cyclic permutation for inversion}
        \label{fig:twisted perm inversion}
    \end{figure}
\end{exmp}
For standard conjugacy, a cyclic normal form is obtained from pyramidal pilings. We now define a similar cyclic normal form, which is preserved under twisted conjugacy. 
\begin{defn}\label{defn:phi cyclic normal form}
    Let $w \in X^{\ast}$ be reduced and $\phi$-cyclically reduced. We say $w$ is a \emph{$\phi$-cyclic normal form} if it is normal and all $\phi$-cyclic permutations are also normal.
\end{defn}
When we apply a $\phi$-cyclic permutation to a pyramidal piling, our $\phi$-cyclic normal form is preserved. This is due to the fact that $s_{i} \leq^{-1}_{\mathrm{SL}} s^{\pm 1}_{j}$ if and only if $s^{-1}_{i} \leq^{-1}_{\mathrm{SL}} s^{\pm 1}_{j}$, for all $1 \leq i, j \leq r$. Therefore if $w = s^{\pm 1}_{j}us^{\pm 1}_{i}$ is a normal form obtained from a pyramidal piling, where $u \in X^{\ast}$, and we consider a $\phi$-cyclic permutation $w' = \phi\left(s^{\pm 1}_{i}\right)s^{\pm 1}_{j}u = s^{\pm 1}_{i}s^{\pm 1}_{j}u$, then $w'$ is also a normal form. The same is true when we apply a $\phi$-cyclic permutation of a bottom tile of a pyramidal piling. This allows us to adapt \cref{prop: 2.20 RAAGs} and \cref{prop: 2.21 RAAGs} with respect to twisted conjugacy by inversions.
\begin{prop}
    Let $p \in P$ be a non-split $\phi$-cyclically reduced pyramidal piling. Let $w \in X^{\ast}$ be the unique normal reduced word which represent the group element defined by $p$. Then $w$ is a $\phi$-cyclic normal form.
\end{prop}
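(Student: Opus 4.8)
My plan is to reduce the statement to the untwisted \cref{prop: 2.20 RAAGs} by means of a sign-forgetting device. By \cref{defn:phi cyclic normal form}, and since $w$ is normal and (as $p$ is) $\phi$-cyclically reduced, it suffices to show that every $\phi$-cyclic permutation $w'$ of $w$ is normal. The crucial observation, which is the word-level form of the order property noted just before the statement, is that normality of a geodesic word depends only on its underlying \emph{index sequence}, obtained by forgetting every exponent. Indeed, a generator never commutes with itself, so the left-to-right subsequence of occurrences of any fixed $s_i$ (signs included) is the same in every geodesic representative of a given element; hence at the first position where two geodesic representatives differ they carry distinct generators, and with the order $s_1 < s_1^{-1} < \dots < s_r < s_r^{-1}$ the comparison there is by index alone, independent of signs. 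Consequently a geodesic word is normal if and only if its index sequence is shortlex-minimal among the index words of its commutation class, and flipping the signs of letters of a geodesic word preserves normality as long as the word stays geodesic.

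Next I introduce the all-positive companion $\bar{w}$ of $w$, namely the word with the same index sequence as $w$ but every exponent equal to $+1$. Its piling $\bar{p}$ has the same shape as $p$, so it is pyramidal and therefore non-split; moreover, having no $-$ beads, no stack can start and end with opposite signs, so $\bar{p}$ is cyclically reduced in the ordinary sense, and $\bar{w}$ is automatically geodesic. Since $\bar{w}$ has the same index sequence as the normal word $w$, the observation of the first paragraph shows $\bar{w}$ is normal, hence is the normal reduced word of the pyramidal piling $\bar{p}$. By \cref{prop: 2.20 RAAGs}, $\bar{w}$ is then a cyclic normal form: every word-cyclic-permutation of $\bar{w}$ is normal, and each such permutation is again all-positive, hence geodesic, so each is index-normal. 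In other words, every cyclic rotation of the index sequence of $w$ is shortlex-minimal in its commutation class.

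Finally, take any $\phi$-cyclic permutation $w'$ of $w$ (\cref{defn:cyc perm}). Because $\phi$ is a composition of inversions it fixes indices, so the index sequence of $w'$ is exactly a cyclic rotation of the index sequence of $w$, which by the previous paragraph is shortlex-minimal. It remains to check that $w'$ is geodesic: this follows from $w$ being $\phi$-cyclically reduced, since by \cref{def: CR} no $\phi$-cyclic permutation of $w$ can be shortened, which for inversions is exactly the content of \cref{form:inversions}. The observation of the first paragraph then upgrades ``index-normal and geodesic'' to ``normal'', so $w'$ is normal and $w$ is a $\phi$-cyclic normal form. The step I expect to require the most care is that first observation, namely verifying rigorously that the chosen order makes normality a purely index-level condition, since everything else is then a clean transfer from \cref{prop: 2.20 RAAGs} through the all-positive companion; a secondary point to pin down is that $\phi$-cyclic reducedness yields geodesy of \emph{every} $\phi$-cyclic permutation of $w$, and not merely of $w$ itself.
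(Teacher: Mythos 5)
Your proof is correct, and it hinges on the same key observation that the paper uses: with the ordering $s_{1} < s^{-1}_{1} < \dots < s_{r} < s^{-1}_{r}$, shortlex comparisons between geodesic representatives are blind to signs, i.e.\ $s_{i} \leq^{-1}_{\mathrm{SL}} s^{\pm 1}_{j}$ if and only if $s^{-1}_{i} \leq^{-1}_{\mathrm{SL}} s^{\pm 1}_{j}$. Where you differ is in execution. The paper states this order property and then \emph{adapts} the proof of \cref{prop: 2.20 RAAGs}: a $\phi$-cyclic permutation of the normal form of a pyramidal piling moves a tile and inverts its bead, and since inversion does not disturb the comparisons, normality is preserved just as in the untwisted argument. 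You instead use \cref{prop: 2.20 RAAGs} as a black box: you formalise sign-blindness as ``normality of a geodesic word depends only on its index sequence'' (correctly justified via the invariance of each generator's signed subsequence across geodesic representatives, so the first discrepancy between two representatives always involves distinct indices), pass to the all-positive companion $\bar{w}$, whose piling has the same shape as $p$ and is therefore pyramidal, non-split, and --- having no $-$ beads --- cyclically reduced in the ordinary sense, apply \cref{prop: 2.20 RAAGs} there, and transfer normality of all cyclic rotations back to the $\phi$-cyclic permutations of $w$, using that inversions fix indices. The two details you make explicit --- that commutation moves depend only on indices, so sign-forgetting identifies commutation classes, and that $\phi$-cyclic reducedness of $w$ forces every $\phi$-cyclic permutation to be geodesic (a length count straight from \cref{def: CR}; your aside that this is ``exactly'' \cref{form:inversions} is slightly off, but the argument from \cref{def: CR} stands on its own) --- are precisely the points the paper leaves implicit. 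What your route buys is that \cref{prop: 2.20 RAAGs} never needs to be re-proved in the twisted setting; what the paper's route buys is brevity, since once the order property is noted the adaptation is immediate.
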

\begin{prop}
    Two $\phi$-cyclic normal forms represent conjugate elements if and only if they are equal up to a $\phi$-cyclic permutation. 
\end{prop}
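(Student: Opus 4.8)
The plan is to prove both implications, mirroring the strategy of \cite{crisp_conjugacy_2009} for \cref{prop: 2.21 RAAGs} but replacing ordinary conjugacy by $\phi$-twisted conjugacy and cyclic permutations by $\phi$-cyclic permutations. Throughout I read ``conjugate'' as ``$\phi$-twisted conjugate'', and I first note that $\sim_{\phi}$ is an equivalence relation: it is reflexive and symmetric by inspection, and transitive because the twisted conjugators compose via $\phi(ww')^{-1} = \phi(w')^{-1}\phi(w)^{-1}$.

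For the direction that equality up to a $\phi$-cyclic permutation implies $\phi$-twisted conjugacy, I would first record that a single elementary $\phi$-cyclic permutation is realised by a twisted conjugation. Writing $w = x_{1}\dots x_{n}$, the $\phi$-cyclic permutation of \cref{defn:cyc perm} with $i=1$ and $k=0$ is $w' = x_{2}\dots x_{n}\phi^{-1}(x_{1})$, and taking $c = \phi^{-1}(x_{1})$ gives $\phi(c)^{-1}wc = x_{1}^{-1}x_{1}x_{2}\dots x_{n}\phi^{-1}(x_{1}) =_{\RAAG} w'$, so $w' \sim_{\phi} w$. Since the general member of the two-parameter family in \cref{defn:cyc perm} is obtained by iterating this elementary move, with the number of completed loops around the word recorded by the exponent $k$, and since $\sim_{\phi}$ is an equivalence relation, every word in the $\phi$-cyclic permutation family of $u$ is $\phi$-twisted conjugate to $u$.

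For the converse, suppose $u \sim_{\phi} v$ with $u,v$ both $\phi$-cyclic normal forms, so in particular both $\phi$-cyclically reduced. By \cref{thm:language sequence}, $u$ and $v$ are joined by a finite sequence of $\phi$-cyclic permutations, commutation relations and free reductions. The key observation is that commutation relations and free reductions preserve the represented group element, hence by \cref{prop:shuffles pilings} preserve the associated piling; only the $\phi$-cyclic permutation steps change it. Reading the sequence at the level of pilings therefore collapses to a chain of elementary piling $\phi$-cyclic permutations from $\pi^{\ast}(u)$ to $\pi^{\ast}(v)$. I would then argue that any such chain --- elementary transfers of a bottom tile to the top (with the sign flip $\phi$ on the stacks where $\phi(s_{i}) = s_{i}^{-1}$) together with their inverses --- composes to a single $\phi$-cyclic permutation in the sense of \cref{defn:cyc perm}, the point being that these transfers generate a cyclic group of rotations whose exponent $k$ bookkeeps the completed loops. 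Let $u'$ denote the word-level $\phi$-cyclic permutation of $u$ produced by this composite rotation.

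Finally, by the defining property of a $\phi$-cyclic normal form (\cref{defn:phi cyclic normal form}), every $\phi$-cyclic permutation of $u$ is normal, so $u'$ is normal; and $v$ is normal by hypothesis. Since $u'$ and $v$ represent the same group element (their pilings agree), uniqueness of the normal representative forces $u' = v$ as words, so $v$ is a $\phi$-cyclic permutation of $u$, as required. I expect the main obstacle to be the composition step: verifying carefully that an arbitrary back-and-forth sequence of elementary piling $\phi$-cyclic permutations, interleaved with the commutations and free reductions coming from \cref{thm:language sequence}, really does reduce to one member of the family in \cref{defn:cyc perm}, including correctly tracking the power of $\phi$ across wrap-arounds. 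The remaining steps are then routine given \cref{prop:shuffles pilings} and the normal-form preservation already observed before the statement.
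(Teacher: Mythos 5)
Your ``if'' direction is correct: the computation $\phi(c)^{-1}wc =_{\RAAG} w'$ with $c = \phi^{-1}(x_{1})$, together with transitivity of $\sim_{\phi}$ and iteration of the one-letter move, accounts for every member of the family in \cref{defn:cyc perm}. Note, however, that the paper gives no written proof of this proposition: it is asserted as a direct adaptation of \cite[Proposition 2.21]{crisp_conjugacy_2009}, the one new ingredient being the observation recorded just before the statement, namely that inversions do not move letters in the $\leq^{-1}_{\mathrm{SL}}$ order, so $\phi$-cyclic permutations of normal forms coming from pyramidal pilings remain normal. Your ``only if'' argument therefore has to stand on its own, and it does not: the composition step you flag is not a deferred detail but is false in the form you propose.

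An elementary piling-level $\phi$-cyclic permutation transfers \emph{any} bottom tile, i.e.\ any letter that can be commuted to the front of the current word, not just its first letter; transfers of different bottom tiles produce different elements, so the pilings reachable from $\pi^{\ast}(u)$ form a branching graph rather than an orbit of a ``cyclic group of rotations'', and a chain of such moves need not compose to any word-level $\phi$-cyclic permutation of $u$. Concretely, let $\RAAG = \langle a,b,c \mid [a,b]=1 \rangle$ with $a < a^{-1} < b < b^{-1} < c < c^{-1}$, and let $\phi$ invert $c$ and fix $a,b$. Then $u = bac$ is a $\phi$-cyclic normal form whose piling has two bottom tiles, the $a$-tile and the $b$-tile. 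Transferring the $a$-tile yields the piling of $bca$, yet the $\phi$-cyclic permutations of the word $bac$ represent only the elements $bac$, $acb$, $cba$, $ac^{-1}b$, $c^{-1}ba$, none of which equals $bca$ in $\RAAG$. (A sequence as in \cref{thm:language sequence} realising this move is $bac \to abc$ by a commutation, then $abc \to bca$ by a $\phi$-cyclic permutation; at the piling level it collapses to the single transfer.) So a chain of length one already defeats the claimed composition. This does not contradict the proposition, because $bca$ is not itself a $\phi$-cyclic normal form (its $\phi$-cyclic permutation $cab$ is not normal); but it shows that any correct proof must use the cyclic-normal-form hypothesis on \emph{both} endpoints, which is exactly the pyramidal-piling rigidity underlying \cite[Propositions 2.18--2.21]{crisp_conjugacy_2009} and the normality-preservation observation the paper makes for inversions. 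Your argument never invokes the pyramidal structure, and appealing to uniqueness of normal forms only at the final step cannot replace it; the ``only if'' direction therefore remains unproven.
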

We now have the necessary definitions and results to prove the following.

\begin{thm}\label{thm:TCP inversions RAAGs linear}
    Let $\phi \in \mathrm{Aut}(A_{\Gamma})$ be a composition of inversions. Then the $\mathrm{TCP}_{\phi}(A_{\Gamma})$ is solvable in linear time.
\end{thm}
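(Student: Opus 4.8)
The plan is to adapt the linear-time conjugacy algorithm from Section~\ref{subsec:CP in RAAGs} almost verbatim, replacing each ingredient (cyclic reduction, cyclic permutation, cyclic normal form) by its $\phi$-twisted counterpart, which has already been defined above for compositions of inversions. The crucial structural fact I would lean on is \cref{thm:language sequence}: two $\phi$-cyclically reduced words are $\phi$-conjugate if and only if they are related by a finite sequence of $\phi$-cyclic permutations, commutation relations and free reductions. This is the exact twisted analogue of the statement underpinning the Crisp--Godelle--Wiest algorithm, so the whole pipeline should transfer, and the task reduces to verifying that each step still runs in linear time on a RAM machine.

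The key steps, in order, would be the following. Given input words $u,v \in X^{\ast}$: First, build the pilings $\pi^{\ast}(u)$ and $\pi^{\ast}(v)$ in linear time (\cref{prop:shuffles pilings}). Second, apply $\phi$-cyclic reduction to each; by \cref{form:inversions} the obstruction to being $\phi$-CR is exactly the presence of a top/bottom tile pair on a $\sigma_i$-stack with $\phi(s_i)=s_i^{-1}$ (both beads of the same sign $+$ or $-$) or an ordinary cancelling pair on the remaining stacks, so a single bottom-to-top scan of each stack detects and removes all such tiles in linear time. Third, factorise each $\phi$-CR piling into non-split factors using the complement graph $\Gamma^{c}$; the factorisation behaves as in the untwisted case because $\phi$ permutes $X$ and so preserves the support. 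Fourth, for each matching pair of non-split factors, transform into a pyramidal piling by $\phi$-cyclic permutations, extract the unique normal reduced word (the $\phi$-cyclic normal form, which is well-defined by the proposition preceding this theorem), and test whether the two resulting words agree up to a $\phi$-cyclic permutation. By the second proposition preceding this statement, this final test correctly decides $\phi$-twisted conjugacy.

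I would justify linearity of the $\phi$-cyclic permutation test by the observation already recorded above: since $s_i \leq^{-1}_{\mathrm{SL}} s_j^{\pm 1}$ if and only if $s_i^{-1} \leq^{-1}_{\mathrm{SL}} s_j^{\pm 1}$, applying $\phi$ (which only inverts certain generators) to a boundary tile of a pyramidal piling preserves normality. Hence the set of $\phi$-cyclic permutations of a $\phi$-cyclic normal form is again a set of normal words of the same length, and deciding whether two such words coincide up to $\phi$-cyclic permutation is a cyclic-string-matching problem with a bounded alphabet decoration, solvable in linear time by standard pattern-matching (\cref{rmk:RAM machines}). Thus every step matches the complexity of the untwisted algorithm, giving the claimed linear-time bound.

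The main obstacle I anticipate is not correctness --- that follows cleanly from \cref{thm:language sequence} and \cref{form:inversions} --- but the bookkeeping in the $\phi$-cyclic permutation step: when a tile migrates between the top and bottom of a stack its sign is flipped by $\phi$, so one must check carefully that this sign flip does not disturb the pyramidal structure, the non-split factorisation, or the normal-form ordering, and that the number of $\phi$-cyclic permutations needed to reach a pyramidal piling remains linearly bounded (as in \cref{prop: 2.20 RAAGs} for the untwisted case). Establishing that the sign flips interact correctly with the shortlex-inverse order is where the argument requires genuine care rather than routine transcription.
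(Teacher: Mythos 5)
Your proposal matches the paper's proof essentially step for step: the paper likewise solves $\mathrm{TCP}_{\phi}(A_{\Gamma})$ for inversions by running the Crisp--Godelle--Wiest pipeline with each ingredient replaced by its twisted counterpart ($\phi$-cyclic reduction justified by \cref{form:inversions}, non-split factorisation, pyramidal pilings via $\phi$-cyclic permutations, and comparison of $\phi$-cyclic normal forms up to $\phi$-cyclic permutation), with correctness resting on the same two propositions and normality preserved by the same $\leq^{-1}_{\mathrm{SL}}$-invariance observation you cite. The bookkeeping issue you flag (sign flips disturbing the pyramidal or non-split structure) is genuinely harmless here precisely because inversions preserve the support of a word; as the paper notes, it is only when graph automorphisms enter that this breaks down.
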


\begin{proof}
    We summarise our algorithm here, which is an adapted version of the conjugacy problem in RAAGs. 
    
    \textbf{\underline{Algorithm: Twisted conjugacy problem in RAAGs (inversions)}}
    
    \textbf{Input}:
\begin{enumerate}
    \item RAAG $A_{\Gamma}$.
    \item Words $v,w \in X^{\ast}$ representing group elements in $A_{\Gamma}$.
    \item $\phi \in \mathrm{Aut}(\RAAG)$ given as a composition of inversions. 
\end{enumerate}
\textbf{Step 1: $\phi$-cyclic reduction}
\begin{adjustwidth}{1.5cm}{}
    Produce the piling representation $\pi^{\ast}(v)$ of the word $v$, and apply $\phi$-cyclic reduction to $\pi^{\ast}(v)$ to produce a $\phi$-cyclically reduced piling $p$. Repeat this step for the word $w$ to get a $\phi$-cyclically reduced piling $q$.
\end{adjustwidth}
\textbf{Step 2: Factorisation}
\begin{adjustwidth}{1.5cm}{}
    Factorise each of the pilings $p$ and $q$ into non-split factors. If the collection of subgraphs do not coincide, \textbf{Output} = \texttt{\color{blue}False}.
\end{adjustwidth}
\textbf{Step 3: Compare non-split factors}
\begin{adjustwidth}{1.5cm}{}
    If $p = p^{(1)}\dots p^{(k)}$ and $q = q^{(1)}\dots q^{(k)}$ are the factorisations found in Step 2, then for each $i = 1, \dots, k$, do the following:
    \begin{adjustwidth}{1.5cm}{}
        \begin{enumerate}[label=(\roman*)]
            \item Transform the non-split $\phi$-cyclically reduced pilings $p^{(i)}$ and $q^{(i)}$ into pyramidal pilings $\Tilde{p}^{(i)}$ and $\Tilde{q}^{(i)}$, by applying $\phi$-cyclic permutations.
            \item Produce the words representing these pilings in $\phi$-cyclic normal form $\sigma^{\ast}\left(\Tilde{p}^{(i)}\right)$ and $\sigma^{\ast}\left(\Tilde{q}^{(i)}\right)$.
            \item Decide whether $\sigma^{\ast}\left(\Tilde{p}^{(i)}\right)$ and $\sigma^{\ast}\left(\Tilde{q}^{(i)}\right)$ are equal up to a $\phi$-cyclic permutation. If not, \textbf{Output} = \texttt{\color{blue}False}.
        \end{enumerate}
    \end{adjustwidth}
    \textbf{Output} = \texttt{\color{blue}True}.
\end{adjustwidth}
\vspace{-20pt}
\end{proof}

\subsubsection{Length-preserving automorphisms}
We now consider all length-preserving automorphisms. Again we can use the pilings construction, and define $\phi$-CR and $\phi$-cyclic permutations with respect to graph automorphisms.
\begin{defn}
    Let $\phi \in \mathrm{Aut}(\RAAG)$ be a graph automorphism, and let $p \in P$ be a non-empty piling. We define a $\phi$-cyclic reduction, with respect to graph automorphisms, as follows. Consider a $\sigma_{i}$-stack such that $\phi(s_{i}) = s_{j}$, for some $1 \leq i,j \leq r$. If the $\sigma_{i}$-stack ends with a $+$ bead (resp. $-$ bead), and the $\sigma_{j}$-stack starts with a $-$ bead (resp. $+$ bead), then remove the top (resp. bottom) tile of $\sigma_{i}$ and bottom (resp. top) tile of $\sigma_{j}$ (with corresponding 0-beads from these tiles).

    We say $p$ is $\phi$-CR if no $\phi$-cyclic reduction is possible. Note if $j=i$, i.e. $s_{i}$ is fixed under $\phi$, this definition is equivalent to cyclic reduction as given in \cref{defn:CR piling}.
\end{defn}
\begin{defn}
    Let $\phi \in \mathrm{Aut}(\RAAG)$ be a graph automorphism, and let $p \in P$ be a non-empty piling. We define a $\phi$-cyclic permutation for graph automorphisms as follows. For each $\sigma_{i}$-stack such that $\phi(s_{i}) = s_{j}$ ($1 \leq i,j \leq r$), we can apply either of the following:
    \begin{enumerate}
        \item If the $\sigma_{i}$-stack ends with a $+$ bead (resp. $-$ bead), remove this tile, and add a $+$ tile (resp. $-$ tile) to the bottom of the $\sigma_{j}$-stack. This is equivalent to a $\phi$-cyclic permutation of the letter $s_{i}$ from the end to the start of the word $w \in X^{\ast}$ representing the piling. 
        \item If the $\sigma_{j}$-stack starts with a $+$ bead (resp $-$ bead), remove this tile, and add a $+$ tile (resp $-$ tile) to the top of the $\sigma_{i}$-stack. This is equivalent to a $\phi$-cyclic permutation of the letter $s_{j}$ from the start to the end of the word $w \in X^{\ast}$ representing the piling. 
    \end{enumerate}
    Note if $j=i$, this definition is equivalent to cyclic permutations as given in \cref{defn:piling cyclic perm}.
\end{defn}
\begin{exmp}\label{exmp:pilings graph auto operations}
    We return to \cref{exmp: inversions}, and consider the order two graph automorphism $\phi \colon a_{1} \leftrightarrow a_{3}, a_{2} \leftrightarrow a_{4}$. \cref{fig:phi-CR graph auto} gives an example of a $\phi$-CR, where as words we have
    \[ u =_{\RAAG} a^{-1}_{4}a_{2}a_{3}a_{1}a_{2}a^{-1}_{1}a_{2}a_{2} = \phi(a_{2})^{-1}a_{2}a_{3}a_{1}a_{2}a^{-1}_{1}a_{2}a_{2} \xrightarrow{\phi-\text{CR}} a_{2}a_{3}a_{1}a_{2}a^{-1}_{1}a_{2}.
    \]
    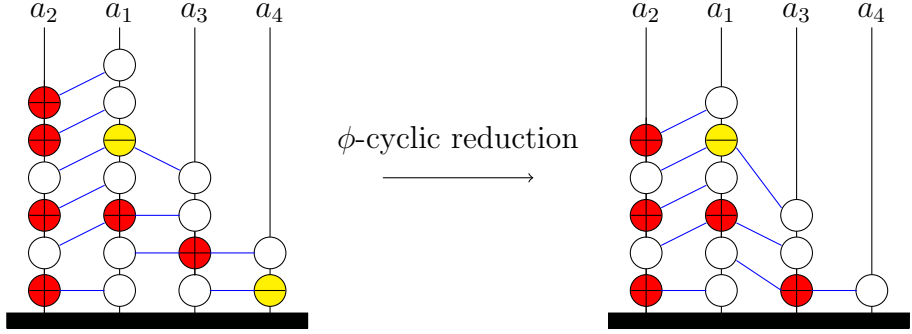
\begin{figure}[h]
        \centering
        \begin{tikzpicture}
            \filldraw[black] (0,0) rectangle (4, 0.2);
            \draw[black] (0.5, 0.2) -- (0.5, 4);
            \node at (0.5, 4.2) {$a_{2}$};
            \node at (1.5, 4.2) {$a_{1}$};
            \node at (2.5, 4.2) {$a_{3}$};
            \node at (3.5, 4.2) {$a_{4}$};
            \draw[black] (1.5,0.2) -- (1.5,4) ;
            \draw[black] (2.5,0.2) -- (2.5,4) ;
            \draw[black] (3.5,0.2) -- (3.5,4) ;
            \filldraw[black] (8,0) rectangle (12, 0.2);
            \node at (8.5, 4.2) {$a_{2}$};
            \node at (9.5, 4.2) {$a_{1}$};
            \node at (10.5, 4.2) {$a_{3}$};
            \node at (11.5, 4.2) {$a_{4}$};
            \draw[black] (8.5,0.2) -- (8.5,4) ;
            \draw[black] (9.5,0.2) -- (9.5,4) ;
            \draw[black] (10.5,0.2) -- (10.5,4) ;
            \draw[black] (11.5,0.2) -- (11.5,4) ;
            \draw[->] (5, 2) -- (7,2);
            \node at (6,2.5) {$\phi$-cyclic reduction};
            \filldraw[color = black, fill=red] (0.5, 0.5) circle (6pt);
            \draw[black] (0.3, 0.5) -- (0.7, 0.5);
            \draw[black] (0.5, 0.2) -- (0.5, 0.8);
            \filldraw[color = black, fill=white] (1.5, 0.5) circle (6pt);
            \filldraw[color = black, fill=white] (2.5, 0.5) circle (6pt);
            \filldraw[color = black, fill=yellow] (3.5, 0.5) circle (6pt);
            \draw[black] (3.3, 0.5) -- (3.7, 0.5);
            \filldraw[color = black, fill=white] (0.5, 1) circle (6pt);
            \filldraw[color = black, fill=white] (1.5, 1) circle (6pt);
            \filldraw[color = black, fill=red] (2.5, 1) circle (6pt);
            \draw[black] (2.3, 1) -- (2.7, 1);
            \draw[black] (2.5, 0.7) -- (2.5, 1.3);
            \filldraw[color = black, fill=white] (3.5, 1) circle (6pt);
            \filldraw[color = black, fill=red] (0.5, 1.5) circle (6pt);
            \draw[black] (0.3, 1.5) -- (0.7, 1.5);
            \draw[black] (0.5, 1.2) -- (0.5, 1.8);
            \filldraw[color = black, fill=red] (1.5, 1.5) circle (6pt);
            \draw[black] (1.3, 1.5) -- (1.7, 1.5);
            \draw[black] (1.5, 1.2) -- (1.5, 1.8) ;
            \filldraw[color = black, fill=white] (2.5, 1.5) circle (6pt);
            \filldraw[color = black, fill=white] (0.5, 2) circle (6pt);
            \filldraw[color = black, fill=white] (1.5, 2) circle (6pt);
            \filldraw[color = black, fill=white] (2.5, 2) circle (6pt);
            \filldraw[color = black, fill=red] (0.5, 2.5) circle (6pt);
            \draw[black] (0.3, 2.5) -- (0.7, 2.5);
            \draw[black] (0.5, 2.3) -- (0.5, 2.8);
            \filldraw[color = black, fill=yellow] (1.5, 2.5) circle (6pt);
            \draw[black] (1.3, 2.5) -- (1.7, 2.5);
            \filldraw[color = black, fill=red] (0.5, 3) circle (6pt);
            \draw[black] (0.3, 3) -- (0.7, 3);
            \draw[black] (0.5, 2.7) -- (0.5, 3.3);
            \filldraw[color = black, fill=white] (1.5, 3) circle (6pt);
            \filldraw[color = black, fill=white] (1.5, 3.5) circle (6pt);
            \draw[blue] (0.7, 3.1) -- (1.3, 3.4);
            \draw[blue] (0.7, 2.6) -- (1.3, 2.9);
            \draw[blue] (0.7, 2.1) -- (1.3, 2.4);
            \draw[blue] (1.7, 2.4) -- (2.3, 2.1) ;
            \draw[blue] (0.7, 1.6) -- (1.3, 1.9);
            \draw[blue] (0.7, 1.1) -- (1.3, 1.4);
            \draw[blue] (0.7, 0.5)--(1.3, 0.5);
            \draw[blue] (1.7, 1.5) -- (2.3, 1.5);
            \draw[blue] (1.7, 1) -- (2.3, 1);
            \draw[blue] (2.7, 1) -- (3.3, 1);
            \draw[blue] (2.7, 0.5) -- (3.3, 0.5);
            %RHS Piling
            \filldraw[color = black, fill = red] (8.5, 0.5) circle (6pt);
            \filldraw[color = black, fill = white] (9.5, 0.5) circle (6pt);
            \filldraw[color = black, fill = red] (10.5, 0.5) circle (6pt);
            \filldraw[color = black, fill = white] (11.5, 0.5) circle (6pt);
            \draw[blue] (10.7, 0.5) -- (11.3, 0.5);
            \draw[blue] (9.7, 0.9) -- (10.3, 0.5);
            \draw[black] (10.3, 0.5)--(10.7, 0.5);
            \draw[black] (10.5, 0.2) -- (10.5, 0.8);
            \draw[black] (8.3, 0.5)--(8.7, 0.5);
            \draw[black] (8.5, 0.2) -- (8.5, 0.8);
            \filldraw[color = black, fill = white] (8.5, 1) circle (6pt);
            \filldraw[color = black, fill = white] (9.5, 1) circle (6pt);
            \filldraw[color = black, fill = white] (10.5, 1) circle (6pt);
            \draw[blue] (8.7, 0.5) -- (9.3, 0.5);
            \draw[blue] (8.7, 2.6) -- (9.3, 2.9);
            \filldraw[color = black, fill = red] (8.5, 1.5) circle (6pt);
            \filldraw[color = black, fill = red] (9.5, 1.5) circle (6pt);
            \filldraw[color = black, fill = white] (10.5, 1.5) circle (6pt);
            \draw[blue] (8.7, 1.1) -- (9.3, 1.4);
            \draw[blue] (9.7, 1.4) -- (10.3, 1.1);
            \draw[black] (8.3, 1.5) -- (8.7, 1.5);
            \draw[black] (8.5, 1.2) -- (8.5, 1.8) ;
            \draw[black] (9.3, 1.5) -- (9.7, 1.5);
            \draw[black] (9.5, 1.2) -- (9.5, 1.8) ;
            \filldraw[color=black, fill = white] (8.5, 2) circle (6pt);
            \filldraw[color=black, fill = white] (9.5, 2) circle (6pt);
            \draw[blue] (8.7, 1.6) -- (9.3, 1.9);
          %  \draw[blue] (9.7, 2) -- (10.3, 2);
            \filldraw[color=black, fill = red] (8.5, 2.5) circle (6pt);
            \filldraw[color=black, fill = yellow] (9.5, 2.5) circle (6pt);
            \draw[black] (8.3, 2.5) -- (8.7, 2.5);
            \draw[black] (8.5, 2.2) -- (8.5, 2.8) ;
            \draw[black] (9.3, 2.5) -- (9.7, 2.5) ;
            \draw[blue] (8.7, 2.1) -- (9.3, 2.4);
            \draw[blue] (9.7, 2.4) -- (10.3, 1.6);
            \filldraw[color=black, fill = white] (9.5, 3) circle (6pt);
        \end{tikzpicture}
        \caption{$\phi$-CR for graph automorphism.}
        \label{fig:phi-CR graph auto}
    \end{figure}\\
    \cref{fig:phi-cyclic permutation graph auto} gives an example of a $\phi$-cyclic permutation, where as words we are applying the operation
    \[ u =_{\RAAG} a_{3}a_{2}a_{1}a_{2}a^{-1}_{1}a_{2} \xrightarrow{\phi-\text{CP}} a_{2}a_{1}a_{2}a^{-1}_{1}a_{2}\phi(a_{3}) = a_{2}a_{1}a_{2}a^{-1}_{1}a_{2}a_{1}.
    \]
    
    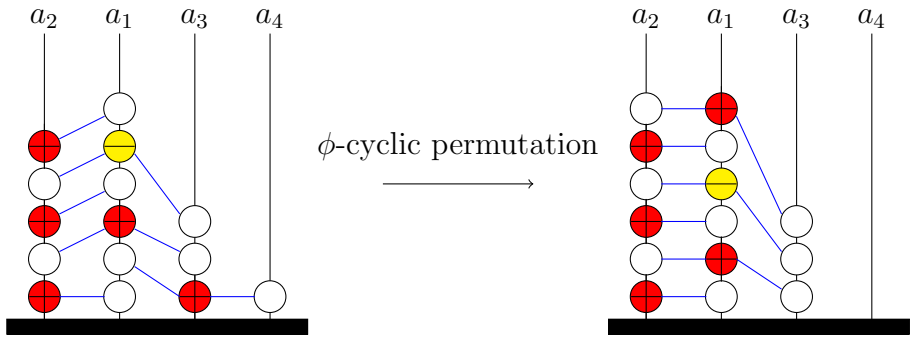
\begin{figure}[h]
        \centering
        \begin{tikzpicture}
            %Set up stacks
            \filldraw[black] (0,0) rectangle (4, 0.2);
            \draw[black] (0.5, 0.2) -- (0.5, 4);
            \node at (0.5, 4.2) {$a_{2}$};
            \node at (1.5, 4.2) {$a_{1}$};
            \node at (2.5, 4.2) {$a_{3}$};
            \node at (3.5, 4.2) {$a_{4}$};
            \draw[black] (1.5,0.2) -- (1.5,4) ;
            \draw[black] (2.5,0.2) -- (2.5,4) ;
            \draw[black] (3.5,0.2) -- (3.5,4) ;
            \filldraw[black] (8,0) rectangle (12, 0.2);
            \node at (8.5, 4.2) {$a_{2}$};
            \node at (9.5, 4.2) {$a_{1}$};
            \node at (10.5, 4.2) {$a_{3}$};
            \node at (11.5, 4.2) {$a_{4}$};
            \draw[black] (8.5,0.2) -- (8.5,4) ;
            \draw[black] (9.5,0.2) -- (9.5,4) ;
            \draw[black] (10.5,0.2) -- (10.5,4) ;
            \draw[black] (11.5,0.2) -- (11.5,4) ;
            \draw[->] (5, 2) -- (7,2);
            \node at (6,2.5) {$\phi$-cyclic permutation};
            %LHS
            \filldraw[color = black, fill = red] (0.5, 0.5) circle (6pt);
            \filldraw[color = black, fill = white] (1.5, 0.5) circle (6pt);
            \filldraw[color = black, fill = red] (2.5, 0.5) circle (6pt);
            \filldraw[color = black, fill = white] (3.5, 0.5) circle (6pt);
            \draw[blue] (2.7, 0.5) -- (3.3, 0.5);
            \draw[blue] (1.7, 0.9) -- (2.3, 0.5);
            \draw[black] (2.3, 0.5)--(2.7, 0.5);
            \draw[black] (2.5, 0.2) -- (2.5, 0.8);
            \draw[black] (0.3, 0.5)--(0.7, 0.5);
            \draw[black] (0.5, 0.2) -- (0.5, 0.8);
            \filldraw[color = black, fill = white] (0.5, 1) circle (6pt);
            \filldraw[color = black, fill = white] (1.5, 1) circle (6pt);
            \filldraw[color = black, fill = white] (2.5, 1) circle (6pt);
            \draw[blue] (0.7, 0.5) -- (1.3, 0.5);
            \draw[blue] (0.7, 2.6) -- (1.3, 2.9);
            \filldraw[color = black, fill = red] (0.5, 1.5) circle (6pt);
            \filldraw[color = black, fill = red] (1.5, 1.5) circle (6pt);
            \filldraw[color = black, fill = white] (2.5, 1.5) circle (6pt);
            \draw[blue] (0.7, 1.1) -- (1.3, 1.4);
            \draw[blue] (1.7, 1.4) -- (2.3, 1.1);
            \draw[black] (0.3, 1.5) -- (0.7, 1.5);
            \draw[black] (0.5, 1.2) -- (0.5, 1.8) ;
            \draw[black] (1.3, 1.5) -- (1.7, 1.5);
            \draw[black] (1.5, 1.2) -- (1.5, 1.8) ;
            \filldraw[color=black, fill = white] (0.5, 2) circle (6pt);
            \filldraw[color=black, fill = white] (1.5, 2) circle (6pt);
            \draw[blue] (0.7, 1.6) -- (1.3, 1.9);
          %  \draw[blue] (9.7, 2) -- (10.3, 2);
            \filldraw[color=black, fill = red] (0.5, 2.5) circle (6pt);
            \filldraw[color=black, fill = yellow] (1.5, 2.5) circle (6pt);
            \draw[black] (0.3, 2.5) -- (0.7, 2.5);
            \draw[black] (0.5, 2.2) -- (0.5, 2.8) ;
            \draw[black] (1.3, 2.5) -- (1.7, 2.5) ;
            \draw[blue] (0.7, 2.1) -- (1.3, 2.4);
            \draw[blue] (1.7, 2.4) -- (2.3, 1.6);
            \filldraw[color=black, fill = white] (1.5, 3) circle (6pt);
            %RHS Piling
            \filldraw[color=black, fill = red] (8.5, 0.5) circle (6pt);
            \filldraw[color=black, fill = white] (8.5, 1) circle (6pt);
            \filldraw[color=black, fill = red] (8.5, 1.5) circle (6pt);
            \filldraw[color=black, fill = white] (8.5, 2) circle (6pt);
            \filldraw[color=black, fill = red] (8.5, 2.5) circle (6pt);
            \filldraw[color=black, fill = white] (8.5, 3) circle (6pt);
            \filldraw[color=black, fill = white] (9.5, 0.5) circle (6pt);
            \filldraw[color=black, fill = red] (9.5, 1) circle (6pt);
            \filldraw[color=black, fill = white] (9.5, 1.5) circle (6pt);
            \filldraw[color=black, fill = yellow] (9.5, 2) circle (6pt);
            \filldraw[color=black, fill = white] (9.5, 2.5) circle (6pt);
            \filldraw[color=black, fill = red] (9.5, 3) circle (6pt);
            \filldraw[color=black, fill = white] (10.5, 0.5) circle (6pt);
            \filldraw[color=black, fill = white] (10.5, 1) circle (6pt);
            \filldraw[color=black, fill = white] (10.5, 1.5) circle (6pt);
            \draw[black] (8.3, 0.5) -- (8.7, 0.5);
            \draw[black] (8.5, 0.2) -- (8.5, 0.8) ;
            \draw[black] (8.3, 1.5) -- (8.7, 1.5);
            \draw[black] (8.5, 1.2) -- (8.5, 1.8) ;
            \draw[black] (8.3, 2.5) -- (8.7, 2.5);
            \draw[black] (8.5, 2.2) -- (8.5, 2.8) ;
            \draw[black] (9.3, 1) -- (9.7, 1);
            \draw[black] (9.5, 0.7) -- (9.5, 1.3) ;
            \draw[black] (9.3, 2) -- (9.7, 2);
            \draw[black] (9.3, 3) -- (9.7, 3);
            \draw[black] (9.5, 2.7) -- (9.5, 3.3) ;
            \draw[blue] (8.7, 0.5) -- (9.3, 0.5);
            \draw[blue] (8.7, 1) -- (9.3, 1);
            \draw[blue] (9.7, 1) -- (10.3, 0.6);
            \draw[blue] (8.7, 1.5) -- (9.3, 1.5);
            \draw[blue] (8.7, 2) -- (9.3, 2);
            \draw[blue] (9.7, 1.9) -- (10.3, 1.1);
            \draw[blue] (8.7, 2.5) -- (9.3, 2.5);
            \draw[blue] (8.7, 3) -- (9.3, 3);
            \draw[blue] (9.7, 2.9) -- (10.3, 1.6);
        \end{tikzpicture}
        \caption{$\phi$-cyclic permutation for graph automorphism.}
        \label{fig:phi-cyclic permutation graph auto}
    \end{figure}
\end{exmp}

When we include graph automorphisms, we can no longer use the linear-time algorithm given in \cref{thm:TCP inversions RAAGs linear}, for several reasons. Firstly, $\phi$-CR can no longer be applied in linear time, since \cref{form:inversions} does not hold for graph automorphisms. This can be seen in \cite[Example 3.15]{crowe_conjugacy_2023}. Secondly, it is unclear how to factorise our defining graph into non-split factors. This is because when we apply $\phi$-cyclic permutations, we can obtain non-isomorphic induced subgraphs.

\begin{exmp}
    Recall \cref{exmp:pilings graph auto operations}. \cref{fig:complement graph auto example} gives the complement $\Gamma^{c}$ of the defining graph for $\RAAG$.
    \begin{figure}[h]
        \centering
        \begin{tikzpicture}
    \filldraw[black] (0,0) circle (2pt);
    \filldraw[black] (2,0) circle (2pt);
    \filldraw[black] (2,2) circle (2pt);
    \filldraw[black] (0,2) circle (2pt);
    \draw (0,0) -- (0,2);
    \draw (0,2) -- (2,2);
    \draw (2,2)-- (2,0);
    \node at (-0.3, 0) {$a_{2}$};
    \node at (-0.3, 2) {$a_{1}$};
    \node at (2.3, 0) {$a_{4}$};
    \node at (2.3, 2) {$a_{3}$};
    \end{tikzpicture}
        \caption{Complement of defining graph}
        \label{fig:complement graph auto example}
    \end{figure}
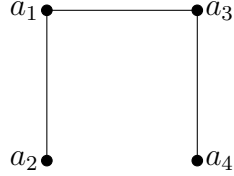\\
    \newpage
    Consider the words $u = a_{2}a_{1}$ and $v = a_{1}a_{4}$. These are related by a $\phi$-cyclic permutation, and \cref{fig:Non split and split issue} gives the corresponding piling representations. However, the induced subgraph $\Delta(u)$ on $\Gamma^{c}$ is connected, whereas the induced subgraph $\Delta(v)$ on $\Gamma^{c}$ is disconnected. Therefore $u$ is non-split, whereas $v$ is split. 
    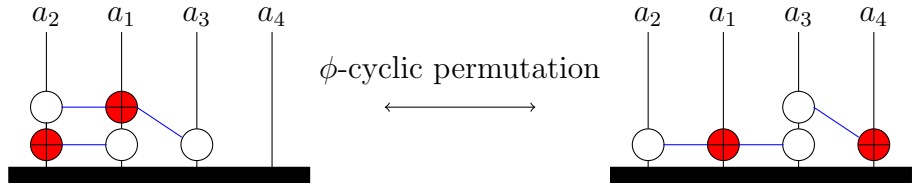
\begin{figure}[h]
        \centering
        \begin{tikzpicture}
            %Set up stacks
            \filldraw[black] (0,0) rectangle (4, 0.2);
            \draw[black] (0.5, 0.2) -- (0.5, 2);
            \node at (0.5, 2.2) {$a_{2}$};
            \node at (1.5, 2.2) {$a_{1}$};
            \node at (2.5, 2.2) {$a_{3}$};
            \node at (3.5, 2.2) {$a_{4}$};
            \draw[black] (1.5,0.2) -- (1.5,2) ;
            \draw[black] (2.5,0.2) -- (2.5,2) ;
            \draw[black] (3.5,0.2) -- (3.5,2) ;
            \filldraw[black] (8,0) rectangle (12, 0.2);
            \node at (8.5, 2.2) {$a_{2}$};
            \node at (9.5, 2.2) {$a_{1}$};
            \node at (10.5, 2.2) {$a_{3}$};
            \node at (11.5, 2.2) {$a_{4}$};
            \draw[black] (8.5,0.2) -- (8.5,2) ;
            \draw[black] (9.5,0.2) -- (9.5,2) ;
            \draw[black] (10.5,0.2) -- (10.5,2) ;
            \draw[black] (11.5,0.2) -- (11.5,2) ;
            \draw[<->] (5, 1) -- (7,1);
            \node at (6,1.5) {$\phi$-cyclic permutation};
            %LHS
            \filldraw[color = black, fill = red] (0.5, 0.5) circle (6pt);
            \filldraw[color = black, fill = white] (1.5, 0.5) circle (6pt);
            \filldraw[color = black, fill = white] (2.5, 0.5) circle (6pt);
            \filldraw[color = black, fill = white] (0.5, 1) circle (6pt);
            \filldraw[color = black, fill = red] (1.5, 1) circle (6pt);
            \draw[blue] (0.7, 0.5) -- (1.3, 0.5);
            \draw[black] (0.3, 0.5) -- (0.7, 0.5);
            \draw[black] (0.5, 0.2) -- (0.5, 0.8);
            \draw[black] (1.3, 1) -- (1.7, 1);
            \draw[black] (1.5, 0.8) -- (1.5, 1.2);
            \draw[blue] (0.7, 1) -- (1.3, 1) ;
            \draw[blue] (1.7, 1) -- (2.3, 0.6);
            %RHS Piling
            \filldraw[color=black, fill = white] (8.5, 0.5) circle (6pt);
            \filldraw[color=black, fill = red] (9.5, 0.5) circle (6pt);
            \filldraw[color=black, fill = white] (10.5, 0.5) circle (6pt);
            \filldraw[color=black, fill = white] (10.5, 1) circle (6pt);
            \filldraw[color=black, fill = red] (11.5, 0.5) circle (6pt);
            \draw[black] (9.3, 0.5) -- (9.7, 0.5);
            \draw[black] (9.5, 0.3) -- (9.5, 0.7);
            \draw[black] (11.3, 0.5) -- (11.7, 0.5);
            \draw[black] (11.5, 0.3) -- (11.5, 0.7);
            \draw[blue] (8.7, 0.5) -- (9.3, 0.5);
            \draw[blue] (9.7, 0.5) -- (10.3, 0.5);
            \draw[blue] (10.7, 1) -- (11.3, 0.6);
            \end{tikzpicture}
        \caption{Related split/non-split pilings}
        \label{fig:Non split and split issue}
    \end{figure}
\end{exmp}
Therefore it is unclear whether a pyramidal piling is well defined in this situation, since after applying a $\phi$-cyclic permutation to a pyramidal piling, we may obtain a split piling. Finally, unlike inversions, $\phi$-cyclic permutations no longer preserve the ordering $\leq^{-1}_{\mathrm{SL}}$, and so we cannot obtain a $\phi$-cyclic normal form. 

However, we can use \cref{thm:language sequence} to construct an algorithm which solves the twisted conjugacy problem using the pilings construction. The idea is to apply all possible $\phi$-cyclic permutations to pilings, whilst also applying reductions when necessary, to create a set $\mathcal{D}$ of piling representatives, which all lie within the same twisted conjugacy class. It then remains to check that for any two words $u,v \in X^{\ast}$, their corresponding sets of piling representatives are equal. Equivalently, we can consider the unique smallest normal form representative from each set, with respect to $\leq^{-1}_{\mathrm{SL}}$, and check these are equal. 

\begin{rmk}
    If $\phi \in \mathrm{Aut}(\RAAG)$ is length-preserving, then $\phi$ must be a composition of inversions and graph automorphisms by \cref{prop:combo len p}. When considering $\phi$-cyclic reduction and $\phi$-cyclic permutations, we combine the definitions of these operations for inversions and graph automorphisms.
\end{rmk}

\begin{proof}[Proof of \cref{thm:len p RAAGs pilings}]
    We provide details of our twisted conjugacy algorithm for RAAGs, where our automorphism is length-preserving. 

    \textbf{\underline{Algorithm: Twisted conjugacy problem in RAAGs (length-preserving)}}
    
    \textbf{Input}:
\begin{enumerate}
    \item RAAG $A_{\Gamma}$.
    \item Words $v,w \in X^{\ast}$ representing group elements in $A_{\Gamma}$.
    \item $\phi \in \mathrm{Aut}(\RAAG)$ which is length-preserving. 
\end{enumerate}
\textbf{Step 1: Piling representations}
\begin{adjustwidth}{1.5cm}{}
    Produce the piling representation $\pi^{\ast}(v)$ of the word $v$, and similarly $\pi^{\ast}(w)$ of the word $w$.
\end{adjustwidth}
\textbf{Step 2: Set of minimal representatives}
\begin{adjustwidth}{1.5cm}{}
    Compute all possible $\phi$-cyclic permutations of $\pi^{\ast}(v)$. If the length of any piling is less than that of $\pi^{\ast}(v)$, then restart the process. Let $\mathcal{D}(v)$ denote the set of all piling representatives found from this process. Similarly compute the set of piling representatives $\mathcal{D}(w)$ with respect to $\pi^{\ast}(w)$.
\end{adjustwidth}
\textbf{Step 3: Compare sets}
\begin{adjustwidth}{1.5cm}{}
    Let $\overline{v}, \overline{w} \in P$ denote the $\leq^{-1}_{\mathrm{SL}}$ minimal pilings which lie in $\mathcal{D}(v)$ and $\mathcal{D}(w)$ respectively. If $\overline{v} = \overline{w}$, then \textbf{Output} = \texttt{\color{blue}True}. Otherwise, \textbf{Output} = \texttt{\color{blue}False}.
\end{adjustwidth}
\vspace{-20pt}
\end{proof}
This algorithm can be adapted for RACGs, where we note that length-preserving automorphisms are precisely the graph automorphisms induced by the defining graph.

\begin{rmk}
    We elaborate on Step 2, in particular computing the set $\mathcal{D}(v)$. First let $\mathcal{D}(v)$ be the empty set, and let $T = \{\pi^{\ast}(v)\}$. For each $x \in T$, we remove $x$ from $T$, add $x$ to $\mathcal{D}(v)$ (unless $x \in \mathcal{D}(v)$), and determine the set $b_{x}$ of bottom tiles in $x$. Then for each $y \in b_{x}$, we apply a $\phi$-cyclic permutation to $x$ with respect to the bottom tile $y$. This produces a new piling $p$, which we add to our set $T$ (unless $p \in T$). We repeat this process until $T$ is empty, and restart the process if any new piling produced has length less than the length of $\pi^{\ast}(v)$. Note $\mathcal{D}(v)$ is necessarily finite since $\phi$ is length-preserving, so the length of a piling does not increase during this process.
\end{rmk}
Whilst we have an implementable solution to the twisted conjugacy problem in RAAGs for all length-preserving automorphisms, the complexity of our algorithm is unclear when our automorphism includes a graph automorphism.
\begin{question}
    What is the computational complexity of $\mathrm{TCP}_{\phi}(\RAAG)$, when $\phi \in \mathrm{Aut}(\RAAG)$ is length-preserving and includes a graph automorphism? 
\end{question}
One may hope to extend this pilings construction to finite order non-length preserving automorphisms, using \cref{thm:language sequence}. When applying $\phi$-cyclic permutations, the size of our piling could change if we consider non-length preserving automorphisms, so it remains unclear how to create an algorithm which terminates using pilings.

\section*{Acknowledgments}
This paper forms part of the author's PhD thesis, and so they would like to primarily thank their former supervisor Laura Ciobanu. The author would also like to thank Andrew Duncan and Alessandro Sisto for their helpful comments and suggestions.

\bibliography{references}  
\bibliographystyle{plain}
\uppercase{\footnotesize{Department of Mathematics, University of Manchester M13 9PL, UK and the Heilbronn Institute for Mathematical Research, Bristol, UK}}
\par 
\textit{Email address:} \texttt{gemma.crowe@manchester.ac.uk}

\end{document}